\newtheorem*{cor}{Corollary}
\newtheorem*{lem}{Lemma}
\newtheorem*{lemma}{Lemma}
\newtheorem*{prop}{Proposition}
\newtheorem*{proposition}{Proposition}
\theoremstyle{definition}
\theoremstyle{definition}
\newtheorem*{thm}{Theorem}
\newtheorem*{rem}{Remark}
\newenvironment{pf}{\proof}{\endproof}
\newcounter{cnt}
\newenvironment{enumerit}{\begin{list}{{\hfill\rm(\roman{cnt})\hfill}}{%
\settowidth{\labelwidth}{{\rm(iv)}}\leftmargin=\labelwidth%
\advance\leftmargin by \labelsep\rightmargin=0pt\usecounter{cnt}}}{\end{list}} \makeatletter
\def\mydggeometry{\makeatletter\dg@YGRID=1\dg@XGRID=20\unitlength=0.003pt\makeatother}
\makeatother \theoremstyle{remark}
\numberwithin{equation}{section}
\let\bwdg\bigwedge
\def\bigwedge{{\textstyle\bwdg}}
\begin{document}

\newcommand{\thmref}[1]{Theorem~\ref{#1}}
\newcommand{\secref}[1]{Section~\ref{#1}}
\newcommand{\lemref}[1]{Lemma~\ref{#1}}
\newcommand{\propref}[1]{Proposition~\ref{#1}}
\newcommand{\corref}[1]{Corollary~\ref{#1}}
\newcommand{\remref}[1]{Remark~\ref{#1}}
\newcommand{\defref}[1]{Definition~\ref{#1}}
\newcommand{\er}[1]{(\ref{#1})}
\newcommand{\id}{\operatorname{id}}
\newcommand{\ord}{\operatorname{\emph{ord}}}
\newcommand{\sgn}{\operatorname{sgn}}
\newcommand{\wt}{\operatorname{wt}}
\newcommand{\tensor}{\otimes}
\newcommand{\from}{\leftarrow}
\newcommand{\nc}{\newcommand}
\newcommand{\rnc}{\renewcommand}
\newcommand{\dist}{\operatorname{dist}}

\nc{\cal}{\mathcal} \nc{\goth}{\mathfrak} \rnc{\bold}{\mathbf}
\renewcommand{\frak}{\mathfrak}
\newcommand{\supp}{\operatorname{supp}}
\newcommand{\Irr}{\operatorname{Irr}}
\renewcommand{\Bbb}{\mathbb}
\nc\bomega{{\mbox{\boldmath $\omega$}}} \nc\bpsi{{\mbox{\boldmath $\Psi$}}}
 \nc\balpha{{\mbox{\boldmath $\alpha$}}}
 \nc\bpi{{\mbox{\boldmath $\pi$}}}
\nc\bsigma{{\mbox{\boldmath $\sigma$}}} \nc\bcN{{\mbox{\boldmath $\cal{N}$}}} \nc\bcm{{\mbox{\boldmath $\cal{M}$}}} \nc\bLambda{{\mbox{\boldmath
$\Lambda$}}}

\newcommand{\loc}{\text{loc}}
\newcommand{\lie}[1]{\mathfrak{#1}}
\newcommand{\res}{\text{res}}

\newcommand{\tlie}[1]{\tilde{\mathfrak{#1}}}
\newcommand{\hlie}[1]{\hat{\mathfrak{#1}}}
\newcommand{\tscr}[1]{\tilde{\mathscr{#1}}}
\newcommand{\hscr}[1]{\hat{\mathscr{#1}}}
\newcommand{\hcal}[1]{\hat{\mathcal{#1}}}
\newcommand{\tcal}[1]{\tilde{\mathcal{#1}}}

\newcommand{\qmc}[1]{F_{#1}}
\newcommand{\qmcerr}[1]{E_{#1}}
\newcommand{\numm}[2]{\nu(#1,#2)}
\newcommand{\even}[1]{{#1}^+}
\newcommand{\odd}[1]{{#1}^-}
\newcommand{\poch}[3][q]{\left( #2 ; \, #1\right)_{#3}} 
\newcommand{\qbinom}[2]{{\genfrac[]{0pt}{}{#1}{#2}}_q}  
\newcommand{\complex}{\mathbb{C}}
\newcommand{\qmtil}[3]{\widetilde{A}_{#3}^{#1 \to #2}}
\newcommand{\qm}[3]{{A}_{#3}^{#1 \rightarrow #2}}
\newcommand{\floor}[1]{\left\lfloor #1 \right\rfloor}
\newcommand{\ceil}[1]{\lceil #1 \rceil}
\newcommand{\integers}{\mathbb{Z}}
\newcommand{\bset}[1][s]{\mathbb{B}^{(#1)}}
\newcommand{\gfunc}[1][s]{G^{(#1)}(x)}
\newcommand{\tpr}{t^\prime}
\newcommand{\tone}{t_1}
\newcommand{\tonepr}{\tone^\prime}
\newcommand{\beq}{\begin{equation}}
\newcommand{\eeq}{\end{equation}}
\newcommand{\vp}[1][p]{v^{(#1)}}
\newcommand{\pseries}[1][x]{\complex\!\left[ \left[#1\right]\right]}
\newcommand{\mcb}{\mathcal{B}}
\newcommand{\rbar}{\overline{r}}
\newcommand{\sdash}{{s^\prime}}
\newcommand{\zqi}[1][q,q^{-1}]{\integers\!\left[ #1 \right]}

\makeatletter
\def\section{\def\@secnumfont{\mdseries}\@startsection{section}{1}%
  \z@{.7\linespacing\@plus\linespacing}{.5\linespacing}%
  {\normalfont\scshape\centering}}
\def\subsection{\def\@secnumfont{\bfseries}\@startsection{subsection}{2}%
  {\parindent}{.5\linespacing\@plus.7\linespacing}{-.5em}%
  {\normalfont\bfseries}}
\makeatother
\def\subl#1{\subsection{}\label{#1}}
 \nc{\Hom}{\operatorname{Hom}}
  \nc{\mode}{\operatorname{mod}}
\nc{\End}{\operatorname{End}} \nc{\wh}[1]{\widehat{#1}} \nc{\Ext}{\operatorname{Ext}} \nc{\ch}{\operatorname{ch}} \nc{\ev}{\operatorname{ev}}
\nc{\Ob}{\operatorname{Ob}} \nc{\soc}{\operatorname{soc}} \nc{\rad}{\operatorname{rad}} \nc{\head}{\operatorname{head}}
\def\Im{\operatorname{Im}}
\def\gr{\operatorname{gr}}
\def\mult{\operatorname{mult}}
\def\Max{\operatorname{Max}}
\def\ann{\operatorname{Ann}}
\def\sym{\operatorname{sym}}
\def\Res{\operatorname{\br^\lambda_A}}
\def\und{\underline}
\def\Lietg{$A_k(\lie{g})(\bsigma,r)$}
\def\loc{\operatorname{loc}}

 \nc{\Cal}{\cal} \nc{\Xp}[1]{X^+(#1)} \nc{\Xm}[1]{X^-(#1)}
\nc{\on}{\operatorname} \nc{\Z}{{\bold Z}} \nc{\J}{{\cal J}} \nc{\C}{{\bold C}} \nc{\Q}{{\bold Q}}
\renewcommand{\P}{{\cal P}}
\nc{\N}{{\Bbb N}} \nc\boa{\bold a} \nc\bob{\bold b} \nc\boc{\bold c} \nc\bod{\bold d} \nc\boe{\bold e} \nc\bof{\bold f} \nc\bog{\bold g}
\nc\boh{\bold h} \nc\boi{\bold i} \nc\boj{\bold j} \nc\bok{\bold k} \nc\bol{\bold l} \nc\bom{\bold m} \nc\bon{\bold n} \nc\boo{\bold o}
\nc\bop{\bold p} \nc\boq{\bold q} \nc\bor{\bold r} \nc\bos{\bold s} \nc\boT{\bold t} \nc\boF{\bold F} \nc\bou{\bold u} \nc\bov{\bold v}
\nc\bow{\bold w} \nc\boz{\bold z} \nc\boy{\bold y} \nc\ba{\bold A} \nc\bb{\bold B} \nc\bc{\mathbb  C} \nc\bd{\bold D} \nc\be{\bold E} \nc\bg{\bold
G} \nc\bh{\bold H} \nc\bi{\bold I} \nc\bj{\bold J} \nc\bk{\bold K} \nc\bl{\bold L} \nc\bm{\bold M} \nc\bn{\mathbfb N} \nc\bo{\bold O} \nc\bp{\bold
P} \nc\bq{\bold Q} \nc\br{\bold R} \nc\bs{\bold S} \nc\bt{\bold T} \nc\bu{\bold U} \nc\bv{\bold V} \nc\bw{\bold W} \nc\bz{\mathbb  Z} \nc\bx{\bold
x} \nc\KR{\bold{KR}} \nc\rk{\bold{rk}} \nc\het{\text{ht }}

\nc\toa{\tilde a} \nc\tob{\tilde b} \nc\toc{\tilde c} \nc\tod{\tilde d} \nc\toe{\tilde e} \nc\tof{\tilde f} \nc\tog{\tilde g} \nc\toh{\tilde h}
\nc\toi{\tilde i} \nc\toj{\tilde j} \nc\tok{\tilde k} \nc\tol{\tilde l} \nc\tom{\tilde m} \nc\ton{\tilde n} \nc\too{\tilde o} \nc\toq{\tilde q}
\nc\tor{\tilde r} \nc\tos{\tilde s} \nc\toT{\tilde t} \nc\tou{\tilde u} \nc\tov{\tilde v} \nc\tow{\tilde w} \nc\toz{\tilde z}
\title[Demazure flags, Chebyshev polynomials,   Partial and  Mock Theta functions]{Demazure flags, Chebyshev polynomials, \\  Partial and  Mock Theta functions}
\author[Biswal, Chari, Schneider and Viswanath]{Rekha Biswal,  Vyjayanthi Chari, Lisa Schneider and Sankaran Viswanath}
\thanks{}

\address{The Institute of Mathematical Sciences, Chennai, India}
\email{rekha@imsc.res.in, svis@imsc.res.in}
\address{Department of Mathematics, University of California, Riverside, CA 92521, U.S.A.}
\email{vyjayanthi.chari@ucr.edu, lschn005@ucr.edu}

\thanks{V.C. was partially supported by  DMS-1303052. S.V acknowledges support from DAE under a XII
plan project.}

\maketitle
\begin{abstract} We  study the  level $m$--Demazure flag of a level $\ell$--Demazure module for $\frak{sl}_2[t]$.  We define the generating series $\qm{\ell}{m}{n}(x,q)$  which encodes the $q$--multiplicity of the level $m$ Demazure module of weight $n$.   We  establish two recursive formulae  for these functions.  We show that  the specialization to $q=1$     is a rational function involving the Chebyshev polynomials. We  give a   closed form for $\qm{\ell}{\ell+1}{n}(x,q)$ and prove that it is  given by a rational function.  In the case when  $m=\ell+1$ and $\ell=1,2$,  we  relate the generating series to partial theta series. We also study the specializations $\qm{1}{3}{n}(q^k,q)$ and relate them to the fifth order mock-theta functions of Ramanujan.
\end{abstract}

\section*{Introduction}

In this paper, we are interested in a  family of Demazure modules which occur in a highest weight integrable representation of the  affine  Lie algebra associated to $\lie{sl}_2$.  These Demazure modules are stable under the action of $\lie{sl}_2$;  in other words they are modules for  the current algebra $\lie{sl}_2[t]$ which is defined to be the Lie algebra  of polynomial  maps from $\bc$ to $\lie{sl}_2$. Alternatively, the current algbera is a maximal parabolic subalgebra of the affine Lie algebra.  The action of the element $d$ of the affine Lie algebra defines an integer grading on the current algebra and also a  compatible grading on the $\lie{sl}_2$-stable  Demazure modules.  In the rest of the paper, the term {\em Demazure module} will always mean a $\lie{sl}_2$-stable Demazure module.

The  Demazure modules are indexed by triples $(\ell, n,r)$ where  $n\in\bz_+$,  $r\in\bz$ and  $\ell \in \mathbb{N}$ and are denoted as $\tau_r^* D(\ell,n)$. The integer $\ell$ is called the level of the Demazure module and is given the action of the canonical central element of the affine algebra and $r\in\bz$ is minimal so that the corresponding graded component is non--zero.
A key result due to Naoi \cite{Naoi} states that if  $m \geq \ell
\ge 1$ then Demazure module $D(\ell,n)$ admits a filtration such that the successive quotients  are isomorphic to  level $m$ Demazure modules. In fact  Naoi  proves this result for an affine Lie algebra associated to a  simply--laced simple Lie algebra.  His proof is indirect using results of  \cite{Joseph} and\cite{Lusztig}.

A  direct and constructive proof of Naoi's result was obtained in \cite{CSSW} for $\lie{sl}_2$.  The methods of this paper also showed  the existence of a level $m$ Demazure flag in  a much wider class of modules for $\lie{sl}_2[t]$. As a result,
 explicit recurrence relations were given for the multiplicity of a  level $(\ell+1)$--Demazure module ocurring  in a   filtration of $\tau_r^*D(\ell, n)$.
A closed form solution of these recurrences  was however,  only obtained in some special cases: the numerical multiplicities (the $q=1$ case) were computed for $\ell = 2, m=3$, and the $q$-multiplicities for $\ell =1, m=2$.

 In this paper, we greatly extend the results of \cite{CSSW}.  We prove that  the generating function for the numerical multiplicity when $\ell=1$ is a  a rational function involving the Chebyshev polynomials.  A  level one Demazure module is isomorphic to a  {\em local Weyl module} \cite{CL} and hence our   result completely determines the numerical multiplicities  of a level $m$ flag of a local Weyl module for any given $m \geq 1$.
Our next main result concerns the $q$-multiplicities when $\ell =1$ and $m=3$. In this case, we first
show that the generating series can be written in terms of partial theta functions. Further, when
appropriately specialized, they reduce to expressions involving  the fifth order mock theta functions
$\phi_0, \phi_1, \psi_0, \psi_1$ of Ramanujan.
 The appearance of Ramanujan's mock theta functions in this set-up is quite unexpected and intriguing.
Certain {\em Hecke type double sums}, which are closely related objects, have previously appeared in Kac-Peterson's work \cite{KP} on characters of integrable representations of $\widehat{\lie{sl}_2}$. Further, mock theta functions (in the modern sense, following Zwegers \cite{Zwegers}) appear in Kac-Wakimoto's theory of affine superalgebras and their characters \cite{KW}.

We turn now to the  overall organization of  this paper.   We have arranged it so that the combinatorial results can be read essentially independently of the representation theory of $\lie{sl}_2[t]$.
In Section 1, we inroduce briefly the notion of a Demazure flag and define  the generating series $\qm{\ell}{m}{n}(x,q)$. We then  state the main combinatorial results of this paper.   In Section 2, we state the main representation theoretic results that are needed for the combinatorial study. The results of Section 2 can also be viewed as giving two  equivalent   definitions of $\qm{\ell}{m}{n}(x,q)$. It is far from obvious that these two definitions are equivalent and the proof of this, is representation theoretic and    can be found in Section 5 and Section 6.  In Section 3, we use the first definition of $\qm{\ell}{m}{n}(x, q)$  and study its specialization to $q=1$.   Section 4 uses the second definition to the  study the relationship with partial theta and mock theta functions.

{\em Acknowledgements. Lisa Schneider thanks Ole Warnaar for his very generous and invaluable help at an early stage of this work. Rekha Biswal thanks Travis Scrimshaw for his help in the writing of programs in SAGE used in the early stage of this work.}
\section{The main results}\label{mainresult} In this section, we give a concise description of the main results of this paper.  We keep the notation to a minimum and refer the reader to the later sections for precise definitions.
\subsection{}
Throughout this paper we denote by $\mathbb{C}$ the field of complex numbers and by $\mathbb{Z}$ (resp. $\mathbb{Z}_{+}$, $\mathbb{N}$) the subset of integers (resp. non-negative, positive integers). Given $n \in \bz_+$ and $m\in\bz$, set
\begin{gather*}\qbinom{n}{m}=\frac{(1-q^n)...(1-q^{n-m+1})}{(1-q)...(1-q^m)}, \ \ \  m>0 ,\\  \\ \qbinom{n}{0}=1,\ \  \qbinom{n}{m}= 0,\ \  m<0.\end{gather*}

\subsection{Demazure Flags and generating series}  Let $\lie{sl}_2[t]\cong\lie{sl}_2\otimes \bc[t]$ be the Lie  algebra of two by two matrices of trace zero with entries in  the algebra $\bc[t]$ of polynomials with complex coefficients  in an indeterminate $t$. The degree grading of $\bc[t]$ defines a natural grading on $\lie{sl}_2[t]$.   Let $D(\ell,s)$ be the $\lie{sl}_2[t]$--module generated by an element $v_s$ with  defining relations: \begin{gather}\label{locweyla}(x\otimes \bc[t])v_s=0,\ \ (h\otimes f) v_s= sf(0)v_s,\ \  \  (y\otimes 1)^{s+1}v_s=0,\\ \label{demrel2a}  (y\otimes t^{s_1+1}) v_s=0,\ \ \ (y\otimes t^{s_1})^{s_0+1} v_s=0,\ \ \ {\rm {if}}\ \  s_0<\ell.\end{gather}  
Here, $x,h,y$ is the standard basis of $\lie{sl}_2$ and $s_0\in\mathbb N$ and $s_1\in\bz$ with $s_1\ge -1$ and $s_0\le \ell$ are such that  $s=\ell s_1+s_0$. 
These modules are finite--dimensional and  $\ell$ is called the level of the Demazure module.  We refer the reader to Section \ref{proofofsecondrecursion} for the connection with the more traditional definition of the Demazure modules.

\medskip

 It was  observed in \cite{Naoi} that one could use the results of \cite{Joseph} and \cite{Lusztig} to show the following: for all integers $m\ge \ell>0$ and any non--negative integer $s$,   the module $D(\ell,s)$ admits a Demazure flag of level $m$,  i.e., there exists  a decreasing sequence of graded submodules of $D(\ell,s)$ such that the successive quotients of the flag are isomorphic to $\tau_p^*D(m, n)$ where $p\ge 0$, $0\le n\le  s$ and $s-n$ is even.  The number of times a particular level $m$--Demazure modul appears as a quotient in a level $m$--flag  is  independent of the choice of the  flag and we define a polynomial in an indeterminate $q$ by,
$$[D(\ell,s): D(m,n)]_q=\sum_{p\ge 0}[D(\ell,s):\tau_p^* D(m,n)]\,q^p,$$ where  $[D(\ell,s):\tau_p^* D(m,n)]$ is the multiplicity of $\tau_p^* D(m,n)$  in a level $m$--Demazure flag of $D(\ell,s)$. It  is known that $$[D(\ell,s): D(m,s)]_q=1,\qquad\ \ [ D(\ell, s): D(m,n)]_q=0\ \ \ s-n\notin2\bz_+.$$   Moreover,
for $m \ge \ell' \ge \ell$ we have
\begin{equation}\label{difflevel}
[D(\ell,s): D(m,n)]_q=\sum_{p\in\bz_{\ge 0}} [D(\ell,s): D(\ell', p)]_q\,[D(\ell',p): D(m,n)]_q.
\end{equation}

\medskip    Our primary goal in this paper is to understand both the polynomials $[D(\ell,s): D(m,n)]_q$ and   the associated generating series: given   $\ell, m \in \mathbb{N}$ with $m \geq \ell$, set
$$\qm{\ell}{m}{n}(x,q)=\sum_{k \geq 0}[D(\ell,n+2k):D(m,n)]_{q}\,x^k,\ \ \ n\ge 0. $$ It will be  convenient to set $\qm{1}{m}{-1}(x,1) = 1$.

\subsection{Numerical Multiplicity and Chebyshev Polynomials}\label{nm}  

Preliminary work using \cite{Sage} assisted in the formulation of the results in this section.
Our first result gives a recursive definition of $\qm{\ell}{m}{n}(x,q)$.
\begin{thm}\label{thm:num_mult_level_1m}
For  $n \geq -1$ and $m \geq 1$, the power series  $\qm{1}{m}{n}(x,1)$ satisfies the recurrence,
\begin{equation}\label{eq:lev1mrec}
\qm{1}{m}{n}(x,1)=
\begin{cases}
\qm{1}{m}{n+1}(x,1) - x\,\qm{1}{m}{n+2}(x,1) &\text{  if  } m \nmid n+2. \\
\\
\qm{1}{m}{n+1}(x,1) &\text{  if  } m \mid n+2.
\end{cases}
\end{equation}
\end{thm}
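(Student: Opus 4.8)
The plan is to prove the recurrence coefficient-by-coefficient and to reduce it to a single branching rule for $\lie{sl}_2$-characters. Writing $b(N,M)$ for the value of $[D(1,N):D(m,M)]_q$ at $q=1$ and extracting the coefficient of $x^k$ in \er{eq:lev1mrec}, the factor $x$ multiplying $\qm{1}{m}{n+2}$ shifts the index so that, after rearranging, both cases of the theorem become the single ``master'' recursion
\[
b(N,M)=b(N-1,M-1)+\epsilon\,b(N-1,M+1),\qquad \epsilon=\begin{cases}0,&m\mid M+1,\\ 1,&m\nmid M+1,\end{cases}
\]
valid for $N>M\ge 0$ (with $M=n+1$ and $N=n+1+2k$), together with the normalisation $b(M,M)=1$ coming from $[D(\ell,s):D(m,s)]_q=1$. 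This translation is routine; the $k=0$ terms reduce to $1=1$, and the boundary value $n=-1$ is covered by the convention $\qm{1}{m}{-1}(x,1)=1$.

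To obtain the master recursion I would pass to $\lie{sl}_2$-characters. Let $\ch$ denote the $\lie{sl}_2$-character (an element of $\Z[z,z^{-1}]$, with $z$ recording the $h$-eigenvalue) and let $\chi=z+z^{-1}$ be the character of the two-dimensional irreducible $\lie{sl}_2$-module $V$. A level $m$ flag of $D(1,N)$ is in particular a filtration by $\lie{sl}_2$-modules, and the grade shift $\tau^*_p$ does not change the $\lie{sl}_2$-character; hence additivity of characters gives
\[
\ch D(1,N)=\sum_{M\ge 0} b(N,M)\,\ch D(m,M).
\]
Since $D(m,M)$ is generated by a weight vector of weight $M$ annihilated by $x\tensor\bc[t]$, its character has top $z$-degree exactly $M$; thus the $\ch D(m,M)$ have strictly increasing leading terms and are linearly independent. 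Consequently the master recursion is equivalent to an identity among the characters $\ch D(m,M)$.

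The engine is the branching identity
\[
\chi\cdot\ch D(m,M)=\ch D(m,M+1)+[\,m\nmid M\,]\,\ch D(m,M-1),\qquad m\ge 1,\ M\ge 0,
\]
where $[\,m\nmid M\,]$ equals $1$ if $m\nmid M$ and $0$ otherwise (so the lower term is absent precisely when $M$ is a multiple of the level, consistently with $\ch D(m,-1)=0$). Granting this, the case $m=1$ reads $\chi\cdot\ch D(1,N-1)=\ch D(1,N)$ since the bracket then always vanishes; substituting the character expansion of $D(1,N-1)$ and applying the identity once more for general $m$ yields
\[
\ch D(1,N)=\sum_{M\ge 0} b(N-1,M)\bigl(\ch D(m,M+1)+[\,m\nmid M\,]\,\ch D(m,M-1)\bigr).
\]
Comparing coefficients of $\ch D(m,P)$ on both sides and invoking linear independence gives $b(N,P)=b(N-1,P-1)+[\,m\nmid P+1\,]\,b(N-1,P+1)$, which is exactly the master recursion with $\epsilon=[\,m\nmid P+1\,]$; setting $P=n+1$ recovers both cases of \er{eq:lev1mrec}.

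The main obstacle, and the only genuinely representation-theoretic step, is the branching identity. I would deduce it from the structure of the tensor product $D(m,M)\tensor V$: the claim is that when $m\nmid M$ this module has a two-step filtration with subquotients $D(m,M+1)$ and $\tau_c^*D(m,M-1)$ (for a suitable shift $c$), while for $m\mid M$ it has the same character as $D(m,M+1)$; since characters are insensitive to the order of the subquotients and to the grade shift, either case produces the stated identity. Proving this amounts to exhibiting cyclic generators of weights $M\pm1$ inside $D(m,M)\tensor V$ and verifying that they satisfy the defining relations \er{locweyla}--\er{demrel2a} of $D(m,M\pm1)$; the divisibility dichotomy is controlled by the decomposition $M=mM_1+M_0$ underlying those relations, the lower generator of weight $M-1$ failing to survive exactly when $m\mid M$. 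The identities $\dim D(m,M+1)+\dim D(m,M-1)=2\dim D(m,M)$ for $m\nmid M$, and $\dim D(m,M+1)=2\dim D(m,M)$ for $m\mid M$, serve as a useful numerical check.
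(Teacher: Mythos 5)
Your reduction is correct as far as it goes: the ``master recursion'' you extract is precisely the $q=1$, $\ell=1$ case of the paper's Theorem \ref{recursion1}, and deducing it from the branching identity $\chi\cdot\ch D(m,M)=\ch D(m,M+1)+\ch D(m,M-1)$ (last term omitted when $m\mid M$) by comparing coefficients against the linearly independent characters $\ch D(m,M)$ is essentially how the paper passes from Proposition \ref{tensorproduct} to Theorem \ref{recursion1}, specialized to $q=1$ and ungraded characters. The problem is the step you yourself flag as the only representation-theoretic content: your proposed proof of the branching identity rests on structural claims about $D(m,M)\otimes V$ that are false whenever $M\ge m$, which is most cases.

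First, for $m\nmid M$ with $M>m$, the module $D(m,M)\otimes V$ has \emph{no} two-step filtration with subquotients $D(m,M+1)$ and $\tau_c^*D(m,M-1)$. Write $M=ms_1+s_0$ with $0<s_0<m$; by Proposition \ref{tensorproduct} the graded character of this tensor product contains the extra term $q^{s_1s_0}(1-q^{s_1})\ch_{\gr}D(m,M-2s_0-1)$, which is nonzero once $s_1\ge 1$, and since the shifted graded characters of Demazure modules are linearly independent, the multiset of subquotients of any Demazure flag is forced: there are three pieces, one of which is the proper quotient $D(m,M+1)/\tau^*D(m,M-2s_0-1)$ rather than $D(m,M+1)$ itself; your identity holds at $q=1$ only because of a cancellation between the piece missing from that quotient and the middle subquotient. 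Second, and more fatally, for $m\mid M$ your method cannot get started: the unique weight-$(M+1)$ line, spanned by $v_M\otimes v_1$, generates a \emph{proper} submodule $U_2$ (the tensor product is cyclic on $v_M\otimes yv_1$, of weight $M-1$, not on $v_M\otimes v_1$), so verifying the relations of $D(m,M+1)$ on $v_M\otimes v_1$ yields only a surjection of $D(m,M+1)$ onto $U_2$ --- indeed this surjection factors through $D(m,M+1)/\tau^*D(m,M-1)$, because $(y\otimes t^{M/m})(v_M\otimes v_1)=0$ --- and gives no control over the character of the whole module. Contrary to your dichotomy, the weight-$(M-1)$ module does not ``fail to survive'' when $m\mid M$: it appears as the quotient $U_0/U_2\cong D(m,M-1)$, and once again the $q=1$ identity results from a cancellation, not an absence. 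Repairing the argument requires what the paper does in Sections \ref{proofofsecondrecursion} and \ref{proofofrecursion1}: presentations of the quotients $D(m,s)/\tau^*D(m,s-2s_0)$ (Proposition \ref{demdefn}(iii)), the relations $\boy(r,p)v_s=0$ of Proposition \ref{altpres} to verify that the candidate generators satisfy the required relations, and dimension counts to promote the resulting surjections to isomorphisms; none of this is supplied by your sketch, so its central claim remains unproved.
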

\medskip 
The proof of the theorem is in Section \ref{proofofnumerical}. We now discuss how to use the theorem to give
 a closed form for $ \qm{1}{m}{n}(x,1)$. We first  recall some relevant facts about Chebyshev polynomials. For $n \geq 0$, the Chebyshev polynomial $U_n(x)$ of the second kind, of degree $n$,  is  given by the recurrence relation:$$U_{n+1}(x) = 2xU_n(x) - U_{n-1}(x),\ \ U_0(x) = 1, \ \ U_1(x) = 2x.$$  It is known that the polynomials  $$P_n(x) = \sum_{k =0}^{\floor{\frac{n}{2}}} (-1)^k \,{n-k \choose k} \,x^k$$ satisfy
$$P_n(x^2) = x^n \,U_n(\,(2x)^{-1}) = \prod_{k=1}^n (1 - 2x\cos \frac{k \pi}{n+1}),$$ and also
\begin{equation}\label{eq:cheby_recc}
P_0 = P_1 = 1 \text{ and } P_{n+1}(x) = P_n(x) - xP_{n-1}(x) \text{ for } n \geq 1.
\end{equation} 
We now establish the following corollary of Theorem \ref{thm:num_mult_level_1m} which gives the closed form of $\qm{\ell}{m}{n}(x,1)$.

\begin{cor}\label{cor: num_mult_level_1m}
For   $n \in\bz_+$,  let $r, s$ be the unique non--negative integers such that $n = ms + r$ with  $0 \leq r < m$.  Then
$$ \qm{1}{m}{n}(x,1) = \frac{P_{m-r-1}(x)}{P_m(x)^{s+1}}.$$
\end{cor}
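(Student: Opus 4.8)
The plan is to check that the proposed rational function solves the recurrence of \thmref{thm:num_mult_level_1m} and then to force equality by a coefficient-wise induction in $x$. It is convenient first to extend everything to $n=-1$: writing $n=ms+r$ with $0\le r\le m-1$ and allowing $s=-1$ (so $n=-1$ means $s=-1$, $r=m-1$), the expression $P_{m-r-1}(x)/P_m(x)^{s+1}$ evaluates to $P_0(x)/P_m(x)^{0}=1=\qm{1}{m}{-1}(x,1)$, matching the convention in the statement. I write $f_n(x)$ for this rational function and $a_n(x)=\qm{1}{m}{n}(x,1)$, both defined for $n\ge -1$; the coincidence $a_{-1}=f_{-1}$ will be the only input that anchors the induction.

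First I would verify that $f_n$ satisfies \eqref{eq:lev1mrec} for every $n\ge -1$ (the case $m=1$ being trivial, since then $f_n\equiv 1$). Sorting by the position of $r$, there are three cases. If $0\le r\le m-3$, then $n,n+1,n+2$ lie in the same block, $m\nmid n+2$, and the claim $P_{m-r-1}=P_{m-r-2}-xP_{m-r-3}$ is exactly the Chebyshev recurrence \eqref{eq:cheby_recc}. If $r=m-2$, then $m\mid n+2$ and substitution gives $f_n=P_1(x)/P_m(x)^{s+1}=P_0(x)/P_m(x)^{s+1}=f_{n+1}$. If $r=m-1$, then $n+1,n+2$ advance to the next block, and $f_n=f_{n+1}-xf_{n+2}$ reduces, after clearing the extra factor $P_m(x)$ created by the step $s\mapsto s+1$, to $P_m=P_{m-1}-xP_{m-2}$, again \eqref{eq:cheby_recc}.

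The main obstacle is that \eqref{eq:lev1mrec} runs downward, expressing $a_n$ through $a_{n+1},a_{n+2}$, so matching $f$ and $a$ at a single point does not propagate in the naive way. To circumvent this I would pass to coefficients of powers of $x$. Put $d_n=a_n-f_n$; since the recurrence is linear with a case split depending only on $n$ and $m$, $d_n$ obeys the same recurrence, and $d_{-1}=0$. Writing $d_n=\sum_{k\ge 0}d_{n,k}x^k$ and extracting the coefficient of $x^k$, the term $-xd_{n+2}$ contributes $-d_{n+2,k-1}$; under the inductive hypothesis $d_{\cdot,k-1}\equiv 0$ (vacuous when $k=0$) both branches of \eqref{eq:lev1mrec} collapse to $d_{n,k}=d_{n+1,k}$ for all $n\ge -1$. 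Hence $d_{\cdot,k}$ is constant in $n$ and therefore equals $d_{-1,k}=0$. Induction on $k$ gives $d_n=0$ for all $n\ge -1$, which is the corollary.
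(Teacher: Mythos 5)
Your proof is correct, and while its computational core coincides with the paper's, the logical architecture is genuinely different. Both arguments come down to the same Chebyshev identities $P_{j+1}=P_j-xP_{j-1}$ checked in the three residue positions ($r\le m-3$, $r=m-2$, $r=m-1$), and both are anchored by $\qm{1}{m}{-1}(x,1)=1$; but the paper does not guess-and-verify. Writing $F_k=\qm{1}{m}{k}(x,1)$, the paper \emph{derives} the closed form from \eqref{eq:lev1mrec}: a downward induction on $p$ within each block of $m$ consecutive indices gives $F_{mk+p}=P_{m-p-1}(x)\,F_{mk+m-1}$, then the cross-block relation $F_{m(k-1)+m-1}=P_m(x)\,F_{mk+m-1}$ telescopes against $F_{-1}=1$ to give $P_m(x)^{k+1}F_{mk+m-1}=1$, which is solved for $F_{mk+m-1}$ because $P_m$ has constant term $1$ and is therefore invertible in $\bc[[x]]$. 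In other words, the paper handles the "downward" direction of the recurrence, which you rightly identify as the main obstacle, by expressing every unknown as an explicit polynomial multiple of the anchor and then dividing; no abstract uniqueness statement is needed. Your route instead verifies the candidate and then supplies a genuine uniqueness lemma: any solution of \eqref{eq:lev1mrec} vanishing at $n=-1$ vanishes identically, proved by induction on the power of $x$, using the fact that the coupling term $-x\,d_{n+2}$ raises the $x$-degree, so that each coefficient sequence $d_{\,\cdot\,,k}$ is constant in $n$ once the previous one is known to vanish. That lemma has no counterpart in the paper and is the real content you add: it isolates cleanly why the single value at $n=-1$ forces the entire family of series, and it would dispose of any other conjectured solution just as quickly. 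The trade-off is that your argument needs the closed form as input, whereas the paper's manipulation produces it.
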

\begin{proof}
Set  $F_k = \qm{1}{m}{k}(x,1)$ for $k \ge -1$.  The corollary follows if we prove that for
all $k \geq 0$  and  $0\le p<m$, we have
$$\text{(a) } F_{mk+p}=P_{m-p-1}(x)\,F_{mk+m-1},\qquad \text{ (b) } F_{mk+m-1}=\frac{1}{P_m(x)^{k+1}}.$$
We first prove (a).
If $p = m-1$ this is immediate from the fact that $P_0(x)=1$, and if $p=m-2$ it follows from the second case in  \eqref{eq:lev1mrec}. Assume now that we have proved the equality for all $0 \leq p'<m$ with $p'>p$. To prove the equality for $p$ note that $m\nmid n+2$ and hence the first case of \eqref{eq:lev1mrec} applies. Together with the induction hypothesis and \eqref{eq:cheby_recc}, we get
$$F_{mk+p}= F_{mk+p+1}-x F_{mk+p+2} = (P_{m-p-2}(x)-x\,P_{m-p-3}(x))\,F_{mk+m-1}= P_{m-p-1}(x)\,F_{mk+m-1},$$
and the claim is established. To prove (b), observe that the first case of  \eqref{eq:lev1mrec} again, gives  $$F_{m(k-1)+m-1}= F_{mk}-xF_{mk+1}= (P_{m-1}(x)-x\,P_{m-2}(x))\,F_{mk+m-1}= P_m(x)\,F_{mk+m-1},\ \ k\ge 0.$$  Since $F_{-1}=1$ we get
 $P_m^{k+1}(x)\,F_{mk+m-1}=1$ and the proof of the corollary is complete.

\end{proof}
\medskip

More generally, in Section \ref{proofofnumerical} of this paper  we also study the series  $\qm{\ell}{m}{n}(x,1) $  and prove that they are rational functions in $x$ when $m = \ell+1$.

\subsection{ Fermionic Formulae } In certain special cases, it is possible to write down the polynomials $[D(\ell,s): D(m,n)]_q$ explicitly as  sums of products of $q$--binomials, i.e.,  by fermionic formulae.  If  $\ell=1$ and $m=2$, it was shown in \cite{CSSW}, that for all $k,n\in\bz_+$, we have \begin{equation}\label{1to2}\left[D(1, n+2k): D(2, n)\right]_q=  q^{k\lceil (n+2k)/2\rceil}{\qbinom{\lfloor(n+2k)/2\rfloor}{k}}. \end{equation}
\medskip

 In  Section \ref{qmultsec} of this paper  we shall prove that
\begin{prop}\label{2to3}
For  $r\in\{0,1,2\}$ and  $s\in\bz_+$ , set
$$\rbar =\begin{cases} 1\ \ \ r=1\\  0\ \ r=0,\,2 \end{cases}, \qquad \sdash =
\floor{\frac{s+1+\rbar}{2}}.$$
For all $p \in\bz_+$, we have
$$\left[ D(2, 3s+r+2p) : D(3, 3s+r)\right]_q = q^{\frac{1}{2}(p^2 +
  p(2s+r))} \sum_{\substack{j=0 \\ j \equiv p \!\!\!\!\!\pmod{2}}}^{\sdash}
q^{j(j-\rbar)/2} \qbinom{\frac{p-j}{2} + s}{s}
\,\qbinom{\sdash}{j}.$$
\end{prop}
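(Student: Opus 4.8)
The plan is to recognise the right-hand side as a candidate for the coefficients of the generating series $\qm{2}{3}{3s+r}(x,q)$ and to verify, by induction, that it satisfies the recurrence for level $2\to 3$ multiplicities from Section 2 together with the correct initial data. Write $M_{s,r}(p)$ for the claimed value of $\left[D(2,3s+r+2p):D(3,3s+r)\right]_q$; since $3s+r+2p$ and $3s+r$ differ by $2p$, proving the proposition is the same as proving $\qm{2}{3}{3s+r}(x,q)=\sum_{p\ge 0}M_{s,r}(p)\,x^p$ for every fixed $r\in\{0,1,2\}$ and $s\in\bz_+$. The base case $p=0$ is forced: only the $j=0$ term survives (for $j=2$ one has $\qbinom{s-1}{s}=0$), giving $q^0\qbinom{s}{s}\qbinom{\sdash}{0}=1$, which matches $[D(2,n):D(3,n)]_q=1$. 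Before the induction I would also record a consistency check that both clarifies and is forced by the statement: the exponent $\tfrac12(p^2+p(2s+r))+\tfrac12 j(j-\rbar)$ is a nonnegative integer exactly because $j\equiv p\pmod 2$ (so $p^2+j^2$ is even) and because $pr-j\rbar$ is even in each of the three cases $r=0,1,2$ — this is precisely the purpose of the definition of $\rbar$, and it signals that the parity restriction on $j$ is an intrinsic feature of the formula.

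Next I would write down the recurrence relating the $\qm{2}{3}{n}(x,q)$ for consecutive values of $n$ (the $\ell=2$, $m=\ell+1=3$, $q$-refined analogue of the three-term recurrence \eqref{eq:lev1mrec}) and translate it into a recurrence for coefficients. Because $n=3s+r$ advances together with $n+2p$, incrementing $n$ by one cycles $r\mapsto r+1$, sending $r=2$ to $r=0$ with $s\mapsto s+1$; hence the recurrence ties the three families $M_{s,0}(\cdot)$, $M_{s,1}(\cdot)$, $M_{s,2}(\cdot)$ to one another, while $\rbar$ and $\sdash=\floor{(s+1+\rbar)/2}$ change accordingly. Comparing coefficients of $x^p$ turns the generating-series recurrence into a relation expressing $M_{s,r}(p)$ through a value at the same $p$ but at the neighbouring residue (weight $n+1$) and a value at $p-1$ (weight $n+2$). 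I would choose whichever of the two Section~2 recurrences makes the induction well-founded — so that some parameter strictly decreases — and carry out a primary induction on $p$, using the $p-1$ term as the inductive hypothesis, with the modification of the recurrence at multiples of $m=3$ matched against the jump of $\sdash$ that occurs precisely as $r$ passes through the relevant residue.

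The heart of the argument is the inductive step: substitute the closed form for $M_{s,r}(p)$ into the recurrence and reduce the resulting identity to a finite combination of Gaussian-binomial identities. The two prefactor powers $q^{\frac12(p^2+p(2s+r))}$ and $q^{j(j-\rbar)/2}$ must be reconciled across the terms of the recurrence, and the $q$-Pascal relations $\qbinom{a}{b}=\qbinom{a-1}{b-1}+q^b\qbinom{a-1}{b}=q^{a-b}\qbinom{a-1}{b-1}+\qbinom{a-1}{b}$ would be applied both to $\qbinom{\frac{p-j}{2}+s}{s}$, to absorb the shift in $s$, and to $\qbinom{\sdash}{j}$, to absorb the shift in $\sdash$. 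A genuine subtlety is the parity condition $j\equiv p\pmod 2$: any step of the recurrence that changes $p$ by one flips the parity of the summation index, so the even-$j$ and odd-$j$ contributions are interchanged and must be realigned by a reindexing $j\mapsto j\pm1$ before the binomial identities can be applied; tracking the accompanying change of $q^{j(j-\rbar)/2}$ under this reindexing is where most of the algebra lives.

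I expect the main obstacle to be exactly this bookkeeping of $q$-powers intertwined with the three-way casework in $r$ (through $\rbar$ and $\sdash$) and the parity of $j$. In particular $r=0$ and $r=2$ share $\rbar=0$ but differ in the value $3s+r$ sitting inside the prefactor, whereas $r=1$ carries $\rbar=1$; keeping $\sdash=\floor{(s+1+\rbar)/2}$ consistent as $(s,r)$ advances, and verifying that the boundary terms $j=\sdash$ cancel correctly precisely when $\sdash$ increments, will require the most care. Once the candidate is shown to satisfy the same recurrence, the same modification at multiples of $3$, and the same initial values as the true multiplicities, uniqueness of the solution forces equality and completes the proof.
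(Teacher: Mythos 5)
Your overall strategy --- treat the right-hand side as a candidate for the coefficients of $\qm{2}{3}{3s+r}(x,q)$, check that it satisfies a defining recurrence together with initial data, and invoke uniqueness --- is exactly the paper's strategy, and you correctly anticipate the main technical tools ($q$-Pascal identities applied to both Gaussian binomials, and reindexings in $j$ to realign terms). But the specific induction you describe rests on a recurrence that does not exist. There is no $q$-refined three-term recurrence tying $\qm{2}{3}{n}$, $\qm{2}{3}{n+1}$, $\qm{2}{3}{n+2}$: equation \eqref{eq:lev1mrec} is the $\ell=1$, $q=1$ specialization, its generalization Theorem \ref{thm:level_l_recc} is also only at $q=1$, and for generic $q$ the general recursion of Theorem \ref{recursion1} has six terms mixing shifts in both weights, so it does not reduce to the clean relation ``$M_{s,r}(p)$ in terms of the neighbouring residue at the same $p$, plus a term at $p-1$'' on which your induction on $p$ is premised. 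The recurrence that actually works is Proposition \ref{secondrecursion}(ii), which for $\ell=2$ keeps the residue $r$ \emph{fixed} and steps $n\mapsto n-3$ (i.e.\ $s\mapsto s-1$); moreover, in generating-series form (equation \eqref{eq: brecc}) it involves the \emph{dilated} series $\qm{2}{3}{n}(xq,q)$ for $n$ odd and $\qm{2}{3}{n}(xq^2,q)$ for $n$ even. These substitutions $x\mapsto xq$, $x\mapsto xq^2$, which your plan never mentions, are precisely what generate the quadratic prefactor $q^{\frac12(p^2+p(2s+r))}$, and in the even case the recurrence steps $p$ by $2$, not $1$.

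The second gap is the base case. Checking $p=0$ alone cannot anchor any well-founded induction here: every application of Proposition \ref{secondrecursion}(ii) decrements $s$ as well, and that recurrence is only valid for $n\ge \ell+1=3$, so the induction necessarily bottoms out at $s=0$ (i.e.\ $n\in\{0,1,2\}$). One must verify separately that the closed form at $s=0$ reproduces the three explicit series coming from Proposition \ref{secondrecursion}(i), namely \eqref{2to3initial}: $\qm{2}{3}{0}(x,q)=\sum_{k\ge 0}q^{2k^2}x^{2k}$, $\qm{2}{3}{1}(x,q)=\sum_{k\ge 0}q^{k(k+1)/2}x^{k}$, $\qm{2}{3}{2}(x,q)=\sum_{k\ge 0}q^{2k(k+1)}x^{2k}$; this is where the parity restriction $j\equiv p\pmod 2$ and the definition of $\rbar$ really earn their keep, since for $\rbar=0$ the inner sum kills all odd $p$. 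The paper performs exactly this $s=0$ check and then verifies \eqref{eq: brecc} for $s\ge 1$ by the $q$-Pascal manipulations you sketch (one application per summand in the even case; two applications plus the reindexing $j'=j-2$ in the odd case). With your plan amended to use this recurrence, to track the $x$-dilations, and to include the $s=0$ initial data, it becomes the paper's proof.
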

Preliminary work using \cite{OEIS} assisted in the identification of the closed formulae in the proposition.
We now discuss several consequences of these formulae and we use freely the notation established so far.

\subsection{The functions $\qm{\ell}{\ell+1}{n}(x,q)$ for $\ell =1, 2$ and Partial Theta Functions}

Recall that  the partial
theta function and  the $q$-Pochammer symbol $\poch{a}{n}$ are given by,
$$\Theta(q,z) = \sum_{k= 0}^\infty q^{k^2}\,z^{k},\qquad  \poch{a}{n}= \prod_{i=1}^n (1-aq^{i-1}) ,\  n>0,\ \
\poch{a}{0}=1.$$
We refer the reader to \cite{AB} for more details regarding partial theta functions.
We now use the fermionic formulae to prove,
\begin{thm}Let $s\ge 0$.\begin{enumerit}
\item[(i)] For $r\in\{0,1\}$, we have\begin{equation}\label{eq: genf_12_final_form}
\qm{1}{2}{2s+r}(x,q)  
= \frac{1}{\poch{q}{s}} \,\sum_{i=0}^s (-1)^i q^{\frac{i(i+1)}{2}}
  \,\qbinom{s}{i} \, \Theta\left(q, \,xq^{i+s+r}\right).
\end{equation}
\item[(ii)]  For $r \in \{0, 1, 2\}$, we have
\begin{equation}
\qm{2}{3}{3s+r}(x,q) = \frac{1}{\poch{q}{s}} \sum_{i=0}^s \sum_{j=0}^\sdash
(-1)^i \,x^j \,q^{\beta(i,j)}
  \,\qbinom{s}{i} \, \qbinom{\, \sdash\,}{j} \Theta\left(q^2, \,x^2q^{\alpha(i,j)}\right),
\end{equation}
where \begin{gather*}\beta(i,j) = \frac{i(i+1)}{2} + j^2 + j\left(s + \frac{r -
  \rbar}{2}\right),\\ \alpha(i,j) = i +2j+2s + r.\end{gather*}
\end{enumerit}
\end{thm}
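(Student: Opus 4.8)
The plan is to reduce both identities to the closed fermionic formulae already available (equation \eqref{1to2} for part (i), \propref{2to3} for part (ii)) and then resum using a single instance of the finite $q$-binomial theorem. The common engine is the elementary identity, valid for all $k,s\in\bz_+$,
\begin{equation*}
\poch{q}{s}\,\qbinom{k+s}{s}=\prod_{j=1}^{s}\bigl(1-q^{k+j}\bigr)=\sum_{i=0}^{s}(-1)^i\,q^{\binom{i+1}{2}+ki}\,\qbinom{s}{i},
\end{equation*}
which follows by applying the finite $q$-binomial theorem $\prod_{j=0}^{s-1}(1+zq^{j})=\sum_{i=0}^{s}q^{\binom{i}{2}}\qbinom{s}{i}z^{i}$ with $z=-q^{k+1}$, using $\binom{i}{2}+i(k+1)=\binom{i+1}{2}+ki$. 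I will refer to this as identity $(\star)$.

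For part (i), I would first rewrite the defining series using \eqref{1to2}. Setting $n=2s+r$ with $r\in\{0,1\}$, one has $\lfloor(n+2k)/2\rfloor=s+k$ and $\lceil(n+2k)/2\rceil=s+k+r$, so \eqref{1to2} collapses to
\begin{equation*}
\qm{1}{2}{2s+r}(x,q)=\sum_{k\ge0}x^{k}\,q^{k(k+s+r)}\,\qbinom{k+s}{s}.
\end{equation*}
On the right-hand side of (i), expanding $\Theta(q,xq^{i+s+r})=\sum_{k\ge0}x^{k}q^{k^2+k(i+s+r)}$ and interchanging the finite sum over $i$ with the sum over $k$ reduces the assertion, coefficient by coefficient in $x^{k}$, to exactly $(\star)$: the common prefactor $q^{k^2+k(s+r)}=q^{k(k+s+r)}$ cancels, leaving $\poch{q}{s}\qbinom{k+s}{s}=\sum_{i}(-1)^iq^{\binom{i+1}{2}+ki}\qbinom{s}{i}$. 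This settles (i).

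For part (ii), I would substitute \propref{2to3} into $\qm{2}{3}{3s+r}(x,q)=\sum_{p\ge0}[D(2,3s+r+2p):D(3,3s+r)]_q\,x^{p}$. Since the inner sum runs over $j\equiv p\pmod2$, the change of variables $p=2k+j$ with $k\ge0$ (which captures all nonzero terms, those with $j>p$ vanishing because $\qbinom{(p-j)/2+s}{s}=0$) turns the double sum into
\begin{equation*}
\qm{2}{3}{3s+r}(x,q)=\sum_{j=0}^{\sdash}x^{j}\,q^{\gamma(j)}\,\qbinom{\sdash}{j}\sum_{k\ge0}x^{2k}\,q^{2k^2+k(2j+2s+r)}\,\qbinom{k+s}{s},
\end{equation*}
where $\gamma(j)=j^2+j\bigl(s+\tfrac{r-\rbar}{2}\bigr)$ is precisely the $j$-dependent part of $\beta(i,j)$. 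Applying $(\star)$ to $\qbinom{k+s}{s}$ inside the inner sum and interchanging the resulting finite $i$-sum with the $k$-sum, the $k$-sum becomes $\sum_{k\ge0}(q^2)^{k^2}\bigl(x^2q^{\alpha(i,j)}\bigr)^{k}=\Theta\bigl(q^2,\,x^2q^{\alpha(i,j)}\bigr)$ with $\alpha(i,j)=i+2j+2s+r$; the leftover $q$-powers assemble as $q^{\binom{i+1}{2}+\gamma(j)}=q^{\beta(i,j)}$, and the factor $1/\poch{q}{s}$ is supplied by $(\star)$. This yields the asserted formula.

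The main difficulty is bookkeeping rather than conceptual. For (ii) one must verify carefully that after substituting $p=2k+j$ into $\tfrac12(p^2+p(2s+r))+\tfrac12 j(j-\rbar)$, the coefficient of $k^2$ equals $2$ exactly, the linear-in-$k$ shift $q^{ki}$ produced by $(\star)$ slots precisely into the base-$q^2$ theta argument, and the surviving $i$- and $j$-exponents recombine to $\beta(i,j)$. The one conceptual point worth flagging is that although the final theta is in base $q^2$, the binomial $\qbinom{k+s}{s}$ lives in base $q$; this is exactly why $(\star)$ must be invoked in base $q$ before the $q^2$-structure emerges, since it is the odd shift $q^{ki}$ that merges with $q^{2k^2+k(2j+2s+r)}$ to produce the clean argument $\alpha(i,j)$.
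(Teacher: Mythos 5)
Your proposal is correct and takes essentially the same route as the paper: your identity $(\star)$ is precisely the paper's expansion \eqref{eq: qbt_step} obtained from the $q$-binomial theorem \eqref{eq: qbthm_full}, and your treatment of part (i) via \eqref{1to2} coincides with the paper's argument. For part (ii) the paper simply states that the proof is ``similar'' and omits it; your substitution $p=2k+j$ into Proposition \ref{2to3} and the subsequent exponent bookkeeping correctly supply exactly those omitted details.
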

\begin{pf} Recall that for $n\ge 0$,   the {\em $q$-binomial theorem} states:
\beq\label{eq: qbthm_full}
\sum_{p=0}^n q^{\,p(p-1)/2}\,\qbinom{n}{p}\, x^p = \poch{-x}{n},
\eeq and hence we get
\begin{equation}\label{eq: qbt_step}
\qbinom{k + s}{s} = \frac{\poch{q^{k+1}}{s}}{\poch{q}{s}} =
\frac{1}{\poch{q}{s}}\sum_{i=0}^s (-q^k)^i \qbinom{s}{i}q^{i(i+1)/2}.
\end{equation} Equation \eqref{1to2} gives
\begin{equation}\label{eq: genf_12}
\qm{1}{2}{2s+r}(x,q) = \sum_{k=0}^\infty x^k q^{k(k+s+r)} \,
\qbinom{k+s}{s},
\end{equation}
for $s \geq 0$, $r \in \{0,1\}$ and using \eqref{eq: qbt_step} gives part (i).  The proof of (ii) is similar and we omit the  details.

\end{pf}
We remark here, that for  $\ell\ge 3$, the recursive formulae for $\qm{\ell}{\ell+1}{n}(x,q)$ are  very complicated and a solution seems difficult. However the preceding theorem does give some hints as to what form a solution might take.
\subsection{A closed form for $\qm{1}{3}{n}(x,q)$ and Mock theta functions} Using equation \eqref{difflevel} with $\ell=1$, $\ell'=2$ and $m=3$ and the formulae in \eqref{1to2} and Proposition \ref{2to3} we get:

\begin{equation}\label{eq: level13_explicit_form}
\qm{1}{3}{3s+r}(x,q) = \sum_{n=0}^\infty \,\sum_{p=0}^n
\!\!\!\sum_{\substack{j=0 \\ j \equiv p \\\!\!\!\!\!\pmod{2}}}^p x^n\,
q^{\frac{1}{2}\,\gamma(n,p,j)} \, \qbinom{n + \floor{\frac{3s+r}{2}}}{n-p}\, \qbinom{\frac{p-j}{2} + s}{s}
\,\qbinom{\sdash\,}{j}
\end{equation}
where $\gamma(n,p,j) = \left( n^2 + (n-p)^2 + j^2\right) +
n\,(2s+r)  + (n-p) \left(2\ceil{\frac{s-r}{2}} + r\right) +
j\,\left(-2\ceil{\frac{r}{2}} + r\right)$.
\medskip

We now discuss the relationship between certain specializations of  the  series $
\qm{1}{3}{n}(x,q)$ and the following  fifth order mock theta functions of Ramanujan \cite{Ramanujan,Watson}:
\begin{align}
\phi_0(q) &= \sum_{n=0}^\infty q^{n^2} \poch[q^2]{-q}{n}\label{eq: phi0},\\
\phi_1(q) &= \sum_{n=0}^\infty q^{(n+1)^2} \poch[q^2]{-q}{n}\label{eq: phi1},\\
\psi_0(q) &= \sum_{n=0}^\infty q^{\frac{(n+1)(n+2)}{2}} \poch{-q}{n}\label{eq: psi0},\\
\psi_1(q) &= \sum_{n=0}^\infty q^{\frac{n(n+1)}{2}} \poch{-q}{n}\label{eq: psi1}.
\end{align} Given any power series $f$ in the indeterminate $q$, we define
\beq \label{eq:pmdef}
\even{f}(q)= \sum_{n \geq 0} c_{2n} \,q^n = \frac{f(q^{\frac{1}{2}}) + f(-q^{\frac{1}{2}})}{2}, \;\;\;
  \odd{f}(q) = \sum_{n \geq 0} c_{2n+1} \,q^n = \frac{f(q^{\frac{1}{2}}) - f(-q^{\frac{1}{2}})}{2q^{\frac{1}{2}}},
\eeq
so that $ f(q) =  \even{f}(q^2) + q\,\odd{f}(q^2).$
We shall prove,
\begin{thm}\label{mocktheta} \begin{align*}
\qm{1}{3}{0}(1,q) &= \even{\phi}_0(q) &
\qm{1}{3}{0}(q,q) &= \odd{\phi}_1(q) \\
\qm{1}{3}{1}(1,q) &= \psi_1(q) &
\qm{1}{3}{1}(q,q) &= \psi_0(q)/q \\
\qm{1}{3}{2}(1,q) &= \odd{\phi}_0(q) &
\qm{1}{3}{2}(q,q) &= \even{\phi}_1(q) / q^2
\end{align*}
Moreover, for all $n \in \mathbb{Z}_+$ and $k \in \mathbb{Z}$, we have
$\poch{q}{\floor{\frac{n}{3}}} \,\qm{1}{3}{n}(q^k,q)$ is in the $\integers[q,q^{-1}]$-span of  $\{1, \phi^{\pm}_0, \phi^{\pm}_1, \psi_0, \psi_1\}$.
\end{thm}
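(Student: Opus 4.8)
The plan is to derive both halves of the statement from the explicit formula \eqref{eq: level13_explicit_form}, which already writes $\qm{1}{3}{3s+r}(x,q)$ as a triple sum of products of $q$-binomials. The three tools are the $q$-binomial theorem \eqref{eq: qbthm_full} in the form $\sum_{c\ge 0}q^{c(c-1)/2}\qbinom{N}{c}x^{c}=\poch{-x}{N}$, the even/odd operators $(\cdot)^{\pm}$ of \eqref{eq:pmdef}, and the identity $\poch{q}{s}\,\qbinom{\frac{p-j}{2}+s}{s}=\poch{q^{(p-j)/2+1}}{s}$ coming from \eqref{eq: qbt_step}. Since $\floor{(3s+r)/3}=s$ for $0\le r<3$, the normalising factor in the theorem is exactly $\poch{q}{s}$, so I would first treat the six displayed identities (the base cases $s=0$) and then bootstrap to the general span statement.

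For the six identities I would set $s=0$ in \eqref{eq: level13_explicit_form}, writing $N$ for the outer summation index. Then $\qbinom{\frac{p-j}{2}+s}{s}=1$, the bound $\sdash\le 1$ makes $\qbinom{\sdash}{j}$ trivial, and the exponent $\gamma$ reduces to a transparent quadratic form in $N,p,j$. Substituting $p=N-c$ and carrying out the inner sum over $c$ by the $q$-binomial theorem turns $\sum_{c}q^{c(c-1)/2+(\cdots)c}\qbinom{N+A}{c}$, with $A=\floor{r/2}\in\{0,1\}$, into a parity-restricted combination of Pochhammer symbols $\poch{-q^{b}}{N+A}$; the effective base is $q$ when $r=1$ and $q^{2}$ when $r\in\{0,2\}$. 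The remaining sum over $N$ of $q^{(\text{quadratic in }N)}\poch{-q^{b}}{N+A}$ is then one of the defining series \eqref{eq: phi0}--\eqref{eq: psi1}, the value $A=\floor{r/2}$ selecting $\phi_0$ versus $\phi_1$ through its exponent, while the congruence $j\equiv p\pmod 2$ is exactly the parity extracted by $(\cdot)^{\pm}$. Matching even and odd parts then yields $\psi_0,\psi_1$ from the base-$q$ case $r=1$ (where no splitting is needed) and $\phi_0^{\pm},\phi_1^{\pm}$ from the base-$q^{2}$ cases $r\in\{0,2\}$, the $q^{1/2}$ produced by \eqref{eq:pmdef} being what turns the base $q^{2}$ into the mock theta form.

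For the general statement I would specialise $x=q^{k}$ and multiply through by $\poch{q}{s}$; the identity $\poch{q}{s}\,\qbinom{\frac{p-j}{2}+s}{s}=\poch{q^{(p-j)/2+1}}{s}$ clears the only denominator and leaves a triple sum of genuine $\integers[q,q^{-1}]$-polynomials. The sum over $j\le \sdash$ is finite, and expanding $\poch{q^{(p-j)/2+1}}{s}$ by \eqref{eq: qbthm_full} replaces it by a finite sum over an index $t\le s$; both pull out as explicit $\integers[q,q^{-1}]$-coefficients. For each fixed $(j,t)$, the substitution $c=N-p$ and the $q$-binomial theorem again collapse the inner sum into a Pochhammer $\poch{-q^{b}}{N+A}$ with $A=\floor{(3s+r)/2}$, and the outer sum over $N$ of $q^{(\text{quadratic in }N)}\poch{-q^{b}}{N+A}$, restricted to a fixed parity of $N$, is a shifted, parity-refined copy of one of $\phi_0,\phi_1,\psi_0,\psi_1$. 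Passing to even/odd parts then expresses each copy through $\{1,\phi_0^{\pm},\phi_1^{\pm},\psi_0,\psi_1\}$.

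The main obstacle is the bookkeeping in this last reduction. Re-indexing the outer sum by $M=N+A$ turns $\poch{-q^{b}}{N+A}$ into $\poch{-q^{b}}{M}$ with unchanged base but shifts the quadratic exponent (toggling the subscript $0\leftrightarrow 1$ exactly as $M^{2}$ versus $(M+1)^{2}$, and $(M{+}1)(M{+}2)/2$ versus $M(M{+}1)/2$, distinguish $\phi_0,\phi_1$ and $\psi_0,\psi_1$) and lowers the summation range to $M\ge A$; the finite defect $\sum_{M=0}^{A-1}$ is a Laurent polynomial absorbed by the generator $1$. The delicate point is to verify that, after the even/odd splitting and these shifts, the leading coefficient of the quadratic and the Pochhammer base line up so that only the six functions (and $1$) occur rather than some genuinely new series; this is a finite check organised by the residue of $N$ modulo $2$ and by the parities of the linear $q$-exponents, and is where I expect to spend the most effort.
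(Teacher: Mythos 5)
Your handling of the six displayed identities is correct, and it is essentially the paper's own argument: setting $s=0$ in \eqref{eq: level13_explicit_form}, collapsing the inner sum by the $q$-binomial theorem into Pochhammer symbols (which is exactly how the paper arrives at $\qm{1}{3}{0}(x,q)=\tfrac12\bigl(\Phi(x,q^{1/2})+\Phi(x,-q^{1/2})\bigr)$, $\qm{1}{3}{1}(x,q)=\Psi(x,q)$ and the odd-part formula for $\qm{1}{3}{2}$), and then specializing $x=1,q$ and splitting into even/odd parts. No issues there.

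The gap is in the ``moreover'' half, and it is not bookkeeping. After you expand $\poch{q}{s}\,\qbinom{\frac{p-j}{2}+s}{s}=\poch{q^{(p-j)/2+1}}{s}$ via \eqref{eq: qbt_step} and collapse the sum over $c=N-p$, the term indexed by $(j,t)$ has the shape $\sum_{N}q^{\frac12 N^2+\mu N}\,\poch{\mp q^{b}}{N+A}$, where \emph{both} the linear exponent $\mu$ and the argument exponent $b$ move with $t$ (through $\pm t/2$), and $\mu$ contains the arbitrary integer $k$ additively. Two things then go wrong with your reduction. First, re-indexing $M=N+A$ changes only the \emph{length} of the Pochhammer symbol, never its argument; to bring $\poch{-q^{b}}{L}$ to the standard argument you must write $\poch{-q^{b}}{L}=\poch{-q^{1/2}}{L+b-\frac12}\big/\poch{-q^{1/2}}{b-\frac12}$, and the denominator is not a unit in $\zqi$, so the individual $(j,t)$-terms do \emph{not} lie in the span $R$ of $\{1,\phi_0^{\pm},\phi_1^{\pm},\psi_0,\psi_1\}$ and your term-by-term identification fails. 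Second, even for terms whose argument is already standard, what you get is $\Psi(q^{k'},q)$ or $\Phi(q^{k'},\pm q^{1/2})$ with $k'$ ranging over an unbounded set (it grows with $k$ and with $s$); the assertion that these evaluations lie in $R$ is not ``a finite check organised by parities'' --- it is precisely the paper's Theorem \ref{thm: xqarbit}, whose proof requires the three-term functional equations \eqref{eq: psirec} and \eqref{eq: phirec}, e.g.\ $\Psi(x,q)=xq^{2}\Psi(xq^{2},q)+xq\,\Psi(xq,q)+1$, together with a two-sided induction on $k'$ from the base cases $k'=0,1$. Your proposal produces no such mechanism, so the reduction cannot terminate. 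Note that the paper avoids the closed form entirely for this half: it first proves Theorem \ref{thm: xqarbit} by that induction, and then handles general $n$ by induction on $n$, using the recursion of Theorem \ref{recursion1} rewritten as a functional equation for $\qmc{n}(x,q)=\qm{1}{3}{n}(x,q)\prod_{i=1}^{\floor{n/3}}(1-q^{i})$, which expresses $\qmc{3p+r}(q^k,q)$ through the $\qmc{m}(q^{k'},q)$ with $m<3p+r$. To repair your argument you would need to import both of these inductions, at which point the manipulation of \eqref{eq: level13_explicit_form} is doing no work.
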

\subsection{Some comments on the higher rank case} Assume that $\lie g$ is a simple Lie algebra of type $A$, $D$ or $E$ and let $\widehat{\lie g}$ be the associated affine Lie algebra.  In this case, the Demazure modules of interest  are indexed by triples  $(\ell,\lambda, r)$ where $\ell$ is a positive integer, $\lambda$ is a dominant integral weight for $\lie g$ and $r$ is an integer. The modules are denoted by $\tau_r^*D(\ell,\lambda)$.  It was shown in \cite{Naoi} that the modules $D(\ell,\lambda)$ admit a level $m$ Demazure flag if $m\ge \ell$ and so the polynomials $[D(\ell,\lambda): D(m,\mu)]_q$ are defined. As remarked earlier,  the  proof given in \cite{Naoi} does not lead to  recursive formulae. On the other hand, it is a non--trivial problem to generalize the methods of \cite{CSSW} to  the higher rank algebras:
see however \cite{Wand} for the level $1\to 2$ case for $\lie{sl}_{n+1}$.

\section{  Recursive formulae  for $[D(\ell,s): D(m,n)]_q$}\label{demflagrecursion}
In this section we    give two   recursive formulae for the polynomials $[D(\ell, s): D(m,n)]_q$,  both of which could be viewed as giving the definition of these polynomials.  It is far from obvious that these two definitions are equivalent. The proof of their equivalence is given in Sections \ref{proofofsecondrecursion} and Section \ref{proofofrecursion1} by showing that both recursions are satisfied by
 the multiplicities of the level $m$ Demazure flag in a level $\ell$ Demazure module.  The first recursive formula plays a critical role in studying $\qm{1}{\ell}{n}(x,1)$ while the second is essential in relating $\qm{\ell}{\ell+1}{n}(x,q)$  to the partial theta and mock theta functions.

\subsection{}  Given integers  $m \geq \ell>0$ and integers $s, n$, set
\begin{equation}\label{initial1}
[D(\ell,s): D(m,n)]_q=0,\  \ {\rm{if}} \ \ s<0\ \   {\rm{or}}\  n<0.
\end{equation}
We have
\begin{equation}\label{initial2}
[D(\ell,0): D(m,n)]_q=\delta_{n,0},\ \ n\in\bz_+,
\end{equation}
where $\delta_{j,k}$ is the Kronecker delta function.
More generally,
\begin{align}
[D(\ell,s): D(m,n)]_q &= 0,\  \ \text{ if } s-n\notin2\bz_+, \text{ and } \label{extra1}\\
[D(\ell,s): D(m,s)]_q &= 1,\ \ s\in\bz_+.\label{extra2}
\end{align}

\subsection{} Given a non--negative integer  $n$ and a positive integer $m$ let $0\le r(n,m)<m$ be the unique integer such that  $n=m\lfloor\frac{n}{m}\rfloor +r(n,m)$. The following result will be proved in Section \ref{proofofnumerical}.

\begin{thm}\label{recursion1} Let $\ell, m$ be positive integers with $m \geq \ell$.  For all $s,n \in \bz_+$, we have
\begin{gather*}
[D(\ell,s+1):D(m,n)]_q= [D(\ell,s):D(m,n-1)]_q+ (1-\delta_{r(n+1,m), 0})[D(\ell,s):D(m,n+1)]_q\\ - (1-\delta_{r(s,\ell),0})[D(\ell,s-1):D(m,n)]_q-q^{\lfloor\frac{s}{\ell}\rfloor \, r(s,\ell)}(1-q^{\lfloor\frac{s}{\ell}\rfloor})[D(\ell,s-2r(s,\ell)-1):D(m,n)]_q\\ + \ \ q^{(\lfloor\frac{n}{m}\rfloor+1)(m-r(n,m)-1)}(1-q^{\lfloor\frac{n}{m}\rfloor+1})[D(\ell,s):D(m,n+2m-2r(n,m)-1)]_q.
\end{gather*}

\end{thm}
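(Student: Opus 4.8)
The statement is precisely the assertion that the genuine flag multiplicities $[D(\ell,s):D(m,n)]_q$ obey the displayed identity, so it suffices to verify this identity directly from the representation theory of $\lie{sl}_2[t]$. The basic tool is additivity of flag multiplicities: if $0\to A\to B\to C\to 0$ is a short exact sequence of graded finite--dimensional $\lie{sl}_2[t]$--modules, each admitting a level $m$ Demazure flag, then $[B:D(m,n)]_q=[A:D(m,n)]_q+[C:D(m,n)]_q$, while applying a grade shift $\tau_p^{*}$ to a module multiplies its contribution by $q^{p}$. With the normalizations \eqref{initial1}--\eqref{extra2} in hand, the theorem thus reduces to exhibiting a suitable family of short exact sequences and keeping track of the grade shifts.

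The plan is to organize everything around the single auxiliary module $W=D(\ell,s)\otimes V$, where $V$ denotes the two--dimensional evaluation module, on which $t$ acts by $0$ and which satisfies $V\cong D(k,1)$ for every $k\ge 1$, and to compute $[W:D(m,n)]_q$ in two different ways. The first uses the level $\ell$ structure of $W$. The key structural input, of the kind established in \cite{CSSW}, is that $D(k,s)\otimes V$ admits a level $k$ Demazure flag whose graded pieces are grade shifts of $D(k,s+1)$, of $D(k,s-1)$, and of a reflected module $D(k,s-2r(s,k)-1)$, the precise list of pieces and their shifts being dictated by $r(s,k)$ and $\floor{s/k}$. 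Taking $k=\ell$ and feeding this into additivity expresses $[W:D(m,n)]_q$ through $[D(\ell,s+1):D(m,n)]_q$, $[D(\ell,s-1):D(m,n)]_q$ and $[D(\ell,s-2r(s,\ell)-1):D(m,n)]_q$; this is the source of the left--hand side together with the two correction terms governed by $\delta_{r(s,\ell),0}$ and by the prefactor $q^{\floor{s/\ell}\,r(s,\ell)}(1-q^{\floor{s/\ell}})$.

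For the second computation I would fix a level $m$ flag of $D(\ell,s)$ and tensor it with $V$. Since $-\otimes V$ is exact, this is a level $m$ filtration of $W$, so additivity gives $[W:D(m,n)]_q=\sum_{n'}[D(\ell,s):D(m,n')]_q\,[D(m,n')\otimes V:D(m,n)]_q$. Applying the same structural lemma with $k=m$ to each factor $D(m,n')\otimes V$, and collecting those $n'$ that contribute to a fixed $D(m,n)$, produces exactly the term $[D(\ell,s):D(m,n-1)]_q$, the term $(1-\delta_{r(n+1,m),0})[D(\ell,s):D(m,n+1)]_q$, and the reflected term of weight $n+2m-2r(n,m)-1$ with its prefactor $q^{(\floor{n/m}+1)(m-r(n,m)-1)}(1-q^{\floor{n/m}+1})$; here \eqref{difflevel}, \eqref{extra1} and \eqref{extra2} are used to drop the vanishing and the normalized summands. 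Equating the two expressions for $[W:D(m,n)]_q$ and solving for $[D(\ell,s+1):D(m,n)]_q$ then yields the asserted recurrence.

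The heart of the matter, and the step I expect to be the main obstacle, is the structural lemma for $D(k,s)\otimes V$: establishing that it has a level $k$ flag and, above all, pinning down the weights of its graded pieces and their grade shifts $\tau_p^{*}$ as explicit functions of $r(s,k)$ and $\floor{s/k}$. All of this is forced by the Demazure relations \eqref{locweyla}--\eqref{demrel2a}, through the degree at which the distinguished lowering generator acts on $v_s$ tensored with the highest weight vector of $V$, and this behaviour changes as $s$ crosses a multiple of $k$. The prefactors $(1-q^{\floor{s/\ell}})$ and $(1-q^{\floor{n/m}+1})$, and the minus signs, are not multiplicities of any single module; they should emerge only in the last step, upon equating the two computations of $[W:D(m,n)]_q$ and dividing by the grade shift of the $D(\ell,s+1)$--piece, as differences $q^{A}-q^{A+B}$ of grade shifts. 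Checking that this division leaves polynomial rather than Laurent coefficients, and that the Kronecker--delta switches come out exactly as stated, will require a careful case analysis on the residues $r(s,\ell)$ and $r(n,m)$; as a consistency check, at $q=1$ the terms carrying these prefactors vanish, and for $\ell=1$ one recovers the simpler recurrence underlying Theorem \ref{thm:num_mult_level_1m}.
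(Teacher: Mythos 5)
Your overall strategy---tensor $D(\ell,s)$ with the two--dimensional module $V\cong D(k,1)$, decompose the result once through the level $\ell$ structure and once through a level $m$ expansion of $D(\ell,s)$, and equate coefficients---is exactly the route the paper takes. However, your ``key structural input'' is false as stated, and this is a genuine gap. You assert that $D(k,s)\otimes V$ admits a level $k$ Demazure flag whose subquotients are grade shifts of $D(k,s+1)$, $D(k,s-1)$ and $D(k,s-2r(s,k)-1)$. If such a flag existed, linear independence of the graded Demazure characters (the Lemma in Section \ref{chdembasis}) would force the multiplicity of the reflected piece to equal its coefficient in the character expansion, which by Proposition \ref{tensorproduct} is $q^{\floor{s/k}\,r(s,k)}(1-q^{\floor{s/k}})$---not a polynomial with nonnegative coefficients once $\floor{s/k}\ge 1$. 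So no such flag can exist: the minus signs are already built into the structure of the tensor product, and do not ``emerge only in the last step'' as you suggest. What is true instead is a graded \emph{character} identity, which the paper proves by exhibiting a filtration of $D(\ell,s)\otimes D(\ell,1)$ whose subquotients are $D(\ell,s-1)$, a grade shift of $D(\ell,s-2s_0-1)$, and the quotient $D(\ell,s+1)/\tau^*_{s_1(s_0+1)}D(\ell,s-2s_0-1)$; this last piece is not a Demazure module, and it is precisely its character that contributes the negative term.

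The repair is to abandon flag language for the intermediate objects and work with graded characters throughout, as the paper does. Since the characters $\ch_{\gr}D(m,n)$ are linearly independent over $\bz[q,q^{-1}]$, the ``multiplicity'' of $D(m,n)$ in any module can be read off as a coefficient of a character expansion, with no need for $W=D(\ell,s)\otimes V$ or the pieces $D(m,n')\otimes V$ to carry flags (they do not, and your additivity statement for flag multiplicities silently assumes all three terms of an exact sequence admit flags). With this substitution your two computations become the paper's two expansions of $\ch_{\gr}D(\ell,s)\,\ch_{\gr}D(\ell,1)$, and the coefficient extraction---including the determination that the unique reflected weight $p$ with $p-2r(p,m)-1=n$ is $p=2m+n-2r(n,m)-1$---goes through verbatim. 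The serious remaining work, which you correctly identify as the main obstacle, is the tensor--product character identity itself: in the paper this is Proposition \ref{tensorproduct}, proved via the presentation of Demazure modules by the relations \eqref{locweyl}, \eqref{demrel2} and the elements $\boy(r,p)$, and it occupies most of Section \ref{proofofrecursion1}.
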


\medskip
\begin{rem}   
The discussion so far  can be viewed as giving a recursive
definition of the polynomials  $[D(\ell,s):D(m,n)]_q$.  Thus,  \eqref{initial1} and \eqref{initial2} define
 $ [D(\ell,s):D(m,n)]_q$ for all $s \leq 0$ and $n \in \bz$.  For  $s \geq 0$, assume that we have defined $ [D(\ell, s'):D(m,n)]_q$ for all $s' \le s$ and all $n \in \bz$. The right hand side in Theorem \ref{recursion1} only involves $[D(\ell, s') : D(m, n')]$ with $s' \le s, n' \in \bz$ and hence  shows that  $[D(\ell,s+1):D(m,n)]_q$ is defined for all $n\in\bz_+$, and hence, by \eqref{initial1}, for all $n \in \bz$.
\end{rem}

\subsection{} In the case when $m=\ell+1$, we can prove a second recursion.
\begin{prop} \label{secondrecursion} Let $\ell$ be a positive integer.
\begin{enumerit}
\item[(i)] for  $0\le n, k \le \ell$, we have  $[D(\ell, k): D(\ell+1,n)]_q=\delta_{k,n}$ and  $$[D(\ell, 2\ell j\pm k) :  D(\ell+1, n)]_q=\delta_{k,n}\, q^{j(\ell j\,\pm\, n)},\ \ j\in\mathbb N.$$
\item[(ii)] if $n\ge \ell+1$ and  $s_0\in\mathbb N$  with $s_0\le \ell$ and $s_1\in\bz_+$, we have
\begin{eqnarray*}\label{eq: qmult_rec_cssw}
[D(\ell,\ell s_1+s_0):D(\ell+1,n)]_q &=&q^{(\ell s_1+s_0-n)/2}[D(\ell,\ell (s_1-1)+(s_0-1)):D(\ell+1,n-(\ell+1))]_q\\ &+& q^{s_0s_1}[D(\ell,\ell (s_1-1)+(\ell-s_0)):D(\ell+1,n)]_q.
\end{eqnarray*}
\medskip

\end{enumerit}

\end{prop}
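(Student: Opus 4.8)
The plan is to argue representation-theoretically, realising both identities as consequences of short exact sequences of graded $\lie{sl}_2[t]$-modules together with the additivity of Demazure-flag multiplicities: if $0\to A\to B\to C\to 0$ is exact and each of $A,B,C$ admits a level-$(\ell+1)$ flag, then $[B:D(\ell+1,n)]_q=[A:D(\ell+1,n)]_q+[C:D(\ell+1,n)]_q$. Existence of level-$(\ell+1)$ flags for every module that occurs is guaranteed by \cite{Naoi} and its constructive refinement in \cite{CSSW}, so the task reduces to producing the correct sequences and identifying their outer terms as graded shifts of Demazure modules. Throughout I write $s=\ell s_1+s_0$ with $1\le s_0\le\ell$ and $s_1\in\bz_+$, and let $v_s$ be the cyclic generator of $D(\ell,s)$.

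The first identity in (i) is immediate: for $0\le k\le\ell$ the relations \eqref{locweyla}--\eqref{demrel2a} force $(y\otimes t)v_k=0$, so $\lie{sl}_2\otimes t\bc[t]$ annihilates $v_k$ and $D(\ell,k)\cong\ev_0 V(k)\cong D(\ell+1,k)$; its level-$(\ell+1)$ flag is trivial, giving $\delta_{k,n}$. For (ii) I would construct the exact sequence
\begin{equation*}
0\longrightarrow \tau^*_{s_0s_1}\,D\big(\ell,\ell(s_1-1)+(\ell-s_0)\big)\longrightarrow D(\ell,\ell s_1+s_0)\longrightarrow C\longrightarrow 0,
\end{equation*}
where the submodule is generated by the extremal vector $w=(y\otimes t^{s_1})^{s_0}v_s$. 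Since $h\otimes t^{a}$ acts by $s\cdot 0=0$ on $v_s$ for $a\ge 1$, one checks that $x\otimes\bc[t]$ annihilates $w$, so $w$ is a genuine highest weight vector of $h$-weight $s-2s_0=\ell(s_1-1)+(\ell-s_0)$ and degree $s_0s_1$; I would then verify that it satisfies the remaining defining relations of $D(\ell,\ell(s_1-1)+(\ell-s_0))$, identifying the submodule it generates with the displayed shifted Demazure module. Additivity produces the second summand $q^{s_0s_1}[D(\ell,\ell(s_1-1)+(\ell-s_0)):D(\ell+1,n)]_q$.

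The same sequence also delivers the second identity of (i). I expect to show that the cokernel $C$ carries a level-$(\ell+1)$ flag whose constituents are those of $D(\ell,\ell(s_1-1)+(s_0-1))=D(\ell,s-(\ell+1))$ with every target weight $p$ raised to $p+\ell+1$, together with a grading twist contributing the weight-dependent factor $q^{(s-n)/2}$; this is the origin of the first summand in (ii). In particular, for a small target $n\le\ell$ the shifted multiplicity $[D(\ell,s-(\ell+1)):D(\ell+1,n-(\ell+1))]_q$ vanishes by \eqref{initial1}, so $C$ contributes nothing and the recursion collapses to $[D(\ell,s):D(\ell+1,n)]_q=q^{s_0s_1}[D(\ell,s-2s_0):D(\ell+1,n)]_q$ for $n\le\ell$. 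Iterating this relation drives $s$ down through the pattern $2\ell j+k\mapsto 2\ell j-k\mapsto 2\ell(j-1)+k$ until it reaches $D(\ell,k)$, and a short induction on $j$ verifying that the accumulated exponents telescope to $j(\ell j\pm n)$ yields the claimed monomial $\delta_{k,n}\,q^{j(\ell j\pm n)}$.

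The crux, and the step I expect to be hardest, is the identification of the cokernel $C$. Unlike the submodule, whose generator and relations are explicit, $C$ carries both a shift of the target weight by $\ell+1$ and the genuinely weight-dependent grading twist $q^{(s-n)/2}$, neither of which can arise from a bare degree shift $\tau^*_p$. Making this precise will require recognising $C$ as a weight-raised copy of $D(\ell,s-(\ell+1))$, realised through a fusion-type isomorphism expressing each $D(\ell+1,p+\ell+1)$ as a graded shift of the fusion product $D(\ell+1,\ell+1)*D(\ell+1,p)$ compatibly with flags, and then computing the resulting degree offset. The supporting technical points -- handling the boundary case $s_0=\ell$, where the relation $(y\otimes t^{s_1})^{s_0+1}v_s=0$ is not imposed, confirming that $w$ is nonzero, and checking that all three terms of the sequence genuinely admit level-$(\ell+1)$ flags so that additivity is legitimate -- are routine by comparison.
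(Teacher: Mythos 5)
Your proposal follows the paper's route exactly: the same short exact sequence $0\to\tau^*_{s_1s_0}D(\ell,s-2s_0)\to D(\ell,s)\to C\to 0$ generated by the extremal vector $(y\otimes t^{s_1})^{s_0}v_s$, the same appeal to additivity of flag multiplicities for part (ii), and the same telescoping induction on $j$ (via the collapse of the recursion when $n\le\ell$) for part (i). The one step you single out as hardest --- identifying the cokernel's multiplicities as $q^{(s-n)/2}[D(\ell,s-\ell-1):D(\ell+1,n-\ell-1)]_q$ --- is not proved in the paper either: it is quoted directly from \cite{CSSW} (Theorem 3.3 and Lemma 3.8, restated as Proposition \ref{cssw}), so the fusion-product machinery you sketch for it is unnecessary; citing that result closes your argument.
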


Again, Equation \eqref{initial1} and Proposition \ref{secondrecursion} together give an inductive definition of $[D(\ell,s): D(\ell+1,n)]_q$. Part (i) of the proposition defines it for an $0\le n\le \ell$ once we note that any integer $s\ge 0$  is either of the form $2\ell j+k$ or $2\ell j -k$ for some $0\le k\le \ell$.  Part (ii) then defines it for $n\ge \ell+1$.   Together with the following assertion:
  for $m \ge \ell' \ge \ell$ we have
\begin{equation}\label{difflevel2}
[D(\ell,s): D(m,n)]_q=\sum_{p\in\bz_{\ge 0}} [D(\ell,s): D(\ell', p)]_q\, [D(\ell',p): D(m,n)]_q,
\end{equation} we  get an alternative definition of $ [D(\ell,s): D(m,n)]_q$. We emphasize that equation \eqref{difflevel2} is not obvious if we just use the definition of $[D(\ell,s): D(m,n)]_q$ from Theorem \ref{recursion1}, but it does become clear once we make the identification with multiplicities in a suitable  Demazure flag.

\section{   The  functions $\qm{\ell}{m}{n}(x,1)$}\label{proofofnumerical}
In this section  we  use Theorem \ref{recursion1} to analyze the functions $\qm{\ell}{m}{n}(x,1)$. Thus, we first  prove Theorem \ref{thm:num_mult_level_1m}. We then give closed formulae for these functions when $m=\ell+1$ in terms of certain initial conditions which are themselves given by recurrences. Finally, we discuss the general case of $\qm{\ell}{m}{n}(x,1)$.

\subsection{}\label{proof of thm1tom}  To prove  Theorem \ref{thm:num_mult_level_1m} we use Theorem \ref{recursion1} with $\ell=1$ and $q=1$. Since $r(p,1)=0$ for all $p\ge 0$, the recursion takes the following simpler form: for $n\ge -1$ and $k\ge 1$,

\begin{eqnarray*}
[D(1,n+1+2k):D(m,n+1)]_{q=1}&= &[D(1,n+2k):D(m,n)]_{q=1}\\ &+&(1-\delta_{r(n+1,m),m-1})[D(1,n+2k):D(m,n+2)]_{q=1}.\end{eqnarray*} Since $r(n+1,m)= m-1\iff m \mid n+2$,  we get
\begin{gather*}
[D(1,n+1+2k):D(m,n+1)]_{q=1}=\\ \begin{cases}[D(1,n+2k):D(m,n)]_{q=1}+[D(1,n+2k):D(m,n+2)]_{q=1}\ \ m\nmid n+2,\\ [D(1,n+2k):D(m,n)]_{q=1}\ \ m\mid n+2.\ \ \end{cases}\end{gather*}
Multiply both sides of the equation by $x^k$, sum  over $k\ge 1$ and  add one to both sides of the resulting equality of power series.
Recalling from \eqref{initial1} and \eqref{extra2} that  $[D(1, p): D(m,p)]_q=1$
and $[D(1,p): D(m,-1)]_q=0$ for all $p\ge 0$ now proves Theorem \ref{thm:num_mult_level_1m}.

\subsection{} We turn our attention to the study of $\qm{\ell}{\ell+1}{n}(x,1)$ for $\ell \ge 1$.
We prove,  \begin{thm}\label{thm:level_l_recc}
For $\ell \geq 1$ and  $n \geq 0$, write $n = (\ell+1)p_n- r_n$  where $p_n\in\bz_+$ and $0 \leq r_n \leq \ell$. Then,
\begin{equation}\label{eq:lev_l_recc}
\qm{\ell}{\ell+1}{n}(x,1)=
\begin{cases}
\qm{\ell}{\ell+1}{n+\ell}(x,1) - x^{r_n}\,\qm{\ell}{\ell+1}{n+2r_n}(x,1) &\text{  if  } \ell+1 \nmid n, \\
\\
\qm{\ell}{\ell+1}{n+\ell}(x,1) &\text{  if  } \ell+1 \mid n.
\end{cases}
\end{equation}
\end{thm}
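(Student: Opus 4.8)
The plan is to specialize \thmref{recursion1} to $q=1$ and $m=\ell+1$ and then pass to generating series, following the proof of \thmref{thm:num_mult_level_1m} but keeping track of the one extra summand that survives when $\ell\ge 2$. Setting $q=1$ annihilates the two summands of \thmref{recursion1} carrying the factors $(1-q^{\lfloor s/\ell\rfloor})$ and $(1-q^{\lfloor n/m\rfloor+1})$, so the recursion collapses to the three–term relation
\begin{gather*}
[D(\ell,s+1):D(\ell+1,n)]_{1}=[D(\ell,s):D(\ell+1,n-1)]_{1}\\
+\,(1-\delta_{r(n+1,\ell+1),0})\,[D(\ell,s):D(\ell+1,n+1)]_{1}-(1-\delta_{r(s,\ell),0})\,[D(\ell,s-1):D(\ell+1,n)]_{1}.
\end{gather*}
When $\ell=1$ the last term disappears because $r(s,1)=0$, and one recovers \thmref{thm:num_mult_level_1m}. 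For $\ell\ge 2$ this last term, whose coefficient I abbreviate as $d_s:=1-\delta_{r(s,\ell),0}$, is precisely the new feature that must be controlled.

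Write $F_n:=\qm{\ell}{\ell+1}{n}(x,1)$ and $a^{(n)}_k:=[D(\ell,n+2k):D(\ell+1,n)]_1$, so that $F_n=\sum_{k\ge 0}a^{(n)}_kx^k$. I would apply the displayed relation with weight $n+1$ and with $s=n+2k$, multiply by $x^k$, sum over $k\ge 1$, and add the $k=0$ term (which is $1$ on each side, by \eqref{extra2}). The first two terms assemble into $F_n$ and $x\,F_{n+2}$ exactly as in the level–one computation, leaving
\[
F_n=F_{n+1}-(1-\delta_{r(n+2,\ell+1),0})\,x\,F_{n+2}+x\sum_{j\ge 0}d_{n+2+2j}\,a^{(n+1)}_j\,x^j .
\]
The final sum is the crux: since $d_{n+2+2j}$ depends only on $n+2+2j \bmod \ell$, it is periodic in $j$ and therefore does \emph{not} reduce to a scalar multiple of a single series $F_m$, unlike the level–one case where $d\equiv 0$.

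The main obstacle is the evaluation of this residue–dependent correction, and the heart of the proof lies in converting the three–term, shift–by–one relation above into the shift–by–$\ell$ relation claimed in the theorem. I would rewrite the correction as $F_{n+1}$ minus the subseries over those $j$ with $\ell\mid n+2+2j$, that is over $s'=n+1+2j$ with $r(s',\ell)=\ell-1$, and then use \propref{secondrecursion} at $q=1$ to re-express each surviving coefficient $[D(\ell,s'):D(\ell+1,n+1)]_1$. Iterating the three–term relation across a window of $\ell$ consecutive weights, exactly one value of $s$ in each window is divisible by $\ell$ (where $d_s=0$); tracking this shows the periodic corrections combine and all but one cancel, while the accumulated weight–shift grows from $1$ to $\ell$. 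This yields the stated recursion, the single surviving correction being $x^{r_n}F_{n+2r_n}$, with the dichotomy governed by whether $\ell+1\mid n$ (equivalently $r_n=0$). The base cases $0\le n\le\ell$ are supplied by \propref{secondrecursion}(i) together with the initial conditions \eqref{initial1}. I expect the bookkeeping of the cancellation over a full residue window to be the most delicate part, and an equivalent route worth keeping in reserve is to prove the coefficientwise identity $a^{(n)}_k=[D(\ell,n+\ell+2k):D(\ell+1,n+\ell)]_1-[D(\ell,n+2k):D(\ell+1,n+2r_n)]_1$ directly by induction on $k$.
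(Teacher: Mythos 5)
Your derivation is correct as far as it goes: at $q=1$ the two summands of \thmref{recursion1} carrying $(1-q^{\lfloor s/\ell\rfloor})$ and $(1-q^{\lfloor n/m\rfloor+1})$ do vanish, and summing the resulting three-term relation against $x^k$ produces exactly the identity you display, i.e.
\begin{equation*}
F_n=(1+x)\,F_{n+1}-\bigl(1-\delta_{r(n+2,\ell+1),0}\bigr)\,x\,F_{n+2}-x\!\!\sum_{j\,:\;\ell\mid n+2+2j}\!\!a^{(n+1)}_j\,x^j .
\end{equation*}
But the proposal stops exactly where the proof has to begin. The claim that iterating this across a window of $\ell$ consecutive weights makes ``the periodic corrections combine and all but one cancel'' is the entire content of the theorem, and the cancellation is \emph{not} formal. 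Take $\ell=2$ and $n\equiv 2\pmod 6$: there the sectioned sum is all-or-nothing (it is $0$ for $n$ even and $xF_{n+1}$ for $n$ odd), the one-step relations read $F_n=F_{n+1}-xF_{n+2}$ and $F_{n+1}=(1+x)F_{n+2}-xF_{n+3}$, and they compose to $F_n=F_{n+2}-xF_{n+3}$, whereas \eqref{eq:lev_l_recc} asserts $F_n=F_{n+2}-xF_{n+2}$. Reconciling the two requires the side identity $F_{n+2}=F_{n+3}$, which is true for these residues but false for others (from \propref{closedform} with $\ell=2$ one gets $F_4=F_5$ and $F_{10}=F_{11}$, yet $F_7\neq F_8$); it is neither an instance of the theorem nor a formal consequence of your one-step relations, so your iteration scheme would have to discover and prove a family of such residue-dependent identities along the way. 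For $\ell\geq 3$ the sectioned corrections genuinely vary with $j$ and compound at every step of the iteration, and nothing in the proposal indicates how this bookkeeping closes up. That is a genuine gap, and you flag it yourself.

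For comparison, the paper never manipulates generating series here. It proves the coefficientwise identity \eqref{eq: nmultassert} --- essentially your ``reserve'' identity, except that as you state it, it fails when $\ell+1\mid n$ (with $r_n=0$ it would force $2a^{(n)}_k=\numm{n+\ell+2k}{n+\ell}$, already false at $k=0$), so the theorem's dichotomy must be built into the statement being proved --- by a double induction, first on $n$ and then on $s$, whose only inputs are the $q=1$ specializations of \propref{secondrecursion}: the explicit values \eqref{eq: facttwo} for $0\le n\le\ell$ and the two-term recursion \eqref{eq: factone} for $s,n\ge\ell+1$. \thmref{recursion1} plays no role in that argument (in this paper it is used for the level-one case, \thmref{thm:num_mult_level_1m}, where the correction term is identically zero). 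If you want to complete your proof, the realistic route is your reserve plan carried out with these two facts as the engine of the induction, which is exactly what the paper does; the generating-series iteration of the first recursion is the wrong tool once $\ell\ge 2$.
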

\begin{rem}
Equation \eqref{eq:lev_l_recc} reduces to  \eqref{eq:lev1mrec} when $\ell=1, m=2, n \ge 0$. Thus, Theorem \ref{thm:level_l_recc} may be viewed as a generalization of this case of Theorem \ref{thm:num_mult_level_1m}.
\end{rem}

\subsection{} We shall use Theorem \ref{thm:level_l_recc} to establish the following result, which
in particular shows that the  functions $\qm{\ell}{\ell+1}{n}(x,1)$ are rational. For this we define polynomials $d_n$, $n\ge 0$  with non--negative integer coefficients as follows. Set
\medskip

$$ K_1 = \begin{bmatrix} 0 & 1 &&&& \\ &0&1&&& \\ &&0&1&& \\  &&&0&\ddots &\\ &&&& \ddots & 1\\ &&&&&0
   \end{bmatrix},  \;\;\;\;\;
K_2 = \begin{bmatrix}  &&&&& 0\\ &&&&\iddots &x^{\ell -1}\\ &&&\iddots & \iddots &\\
  &&0 & x^2 &&\\ & 0 & x &&&\\0 & 1 &&&& \end{bmatrix}
 \text{   and   } K = K_1 + K_2.$$

\medskip

 The polynomials $d_n$ are defined by requiring that the following equality hold for all $p\ge 0$: $$ \begin{bmatrix} d_{(\ell+1)p} & d_{(\ell+1)p+1} & \cdots &
  d_{(\ell+1)p+\ell  }\end{bmatrix}^{T}= K^{p+1} \begin{bmatrix} 1 & 1 & \cdots &
  1\end{bmatrix}^T.$$

\begin{prop}\label{closedform} Let $\ell \geq 1$. Then, for all $n \geq 0$, we have
  $$\qm{\ell}{\ell+1}{n}(x,1)=\frac{d_{n}}{(1-x^{\ell})^{\left\lfloor \frac{n}{\ell+1} \right\rfloor +1}}.$$

\end{prop}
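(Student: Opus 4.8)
The plan is to recast the statement as a transfer-matrix identity and to reduce it to a one-step recursion on blocks of size $\ell+1$ together with the evaluation of the initial block. Write $F_n := \qm{\ell}{\ell+1}{n}(x,1)$ and, for $p\ge 0$, collect a block of values into the column vector $\mathbf F^{(p)} := (F_{(\ell+1)p},\,F_{(\ell+1)p+1},\dots,F_{(\ell+1)p+\ell})^T$, writing $\mathbf F^{(p)}_i = F_{(\ell+1)p+i}$ for $0\le i\le \ell$. Since $\lfloor n/(\ell+1)\rfloor = p$ when $n=(\ell+1)p+i$ with $0\le i\le \ell$, and since the $d_n$ are defined by $\mathbf d^{(p)} := (d_{(\ell+1)p},\dots,d_{(\ell+1)p+\ell})^T = K^{p+1}\mathbf 1$, the Proposition is equivalent to the assertion $(1-x^\ell)^{p+1}\mathbf F^{(p)} = K^{p+1}\mathbf 1$ for all $p\ge 0$. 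As both $K^{p+1}\mathbf 1$ and $(1-x^\ell)^{p+1}\mathbf F^{(p)}$ would then satisfy the same one-step rule $\mathbf w^{(p+1)} = K\mathbf w^{(p)}$, it suffices to prove (i) the one-step relation $(1-x^\ell)\mathbf F^{(p+1)} = K\mathbf F^{(p)}$ for every $p\ge 0$, and (ii) the base case $(1-x^\ell)\mathbf F^{(0)} = K\mathbf 1$; the result then follows by induction on $p$.

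For (i) I would first read off the action of $K=K_1+K_2$ on $\mathbf v=(v_0,\dots,v_\ell)$: the superdiagonal $K_1$ gives the shift and the reflected antidiagonal $K_2$ gives the mirror terms, so that $(K\mathbf v)_0 = v_1$, $(K\mathbf v)_\ell = v_1$, and $(K\mathbf v)_i = v_{i+1} + x^{\ell-i}v_{\ell+1-i}$ for $1\le i\le \ell-1$. Next I feed $n=(\ell+1)p+i$ into \thmref{thm:level_l_recc}. For $i=0$ one has $\ell+1\mid n$, and the recursion yields the within-block identity $\mathbf F^{(p)}_0 = \mathbf F^{(p)}_\ell$. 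For $1\le i\le \ell$ one has $r_n=\ell+1-i$, and after rewriting the indices $n+\ell$ and $n+2r_n$ in block coordinates the recursion becomes, upon setting $j=i-1$,
\begin{equation*}
\mathbf F^{(p+1)}_j = \mathbf F^{(p)}_{j+1} + x^{\ell-j}\,\mathbf F^{(p+1)}_{\ell-j},\qquad 0\le j\le \ell-1.\tag{$\star_j$}
\end{equation*}

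The key manipulation eliminates the unwanted next-block term on the right. For $1\le j\le \ell-1$ the index $\ell-j$ again lies in $\{1,\dots,\ell-1\}$, so substituting $(\star_{\ell-j})$ into $(\star_j)$ replaces $x^{\ell-j}\mathbf F^{(p+1)}_{\ell-j}$ by $x^{\ell-j}\mathbf F^{(p)}_{\ell+1-j}+x^{\ell}\mathbf F^{(p+1)}_j$; moving the term $x^{\ell}\mathbf F^{(p+1)}_j$ to the left produces exactly the factor $1-x^\ell$ and the right-hand side $(K\mathbf F^{(p)})_j$ (this formal substitution is valid even when $\ell=2j$, where it amounts to the factorization $1-x^\ell=(1-x^{\ell/2})(1+x^{\ell/2})$). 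The boundary components $j=0$ and $j=\ell$ are handled using the within-block identity for block $p+1$, namely $\mathbf F^{(p+1)}_0=\mathbf F^{(p+1)}_\ell$: combining this with $(\star_0)$ gives $(1-x^\ell)\mathbf F^{(p+1)}_0 = \mathbf F^{(p)}_1 = (K\mathbf F^{(p)})_0$, and then $(1-x^\ell)\mathbf F^{(p+1)}_\ell = (1-x^\ell)\mathbf F^{(p+1)}_0 = \mathbf F^{(p)}_1 = (K\mathbf F^{(p)})_\ell$. This verifies every component of $(1-x^\ell)\mathbf F^{(p+1)} = K\mathbf F^{(p)}$. For (ii) I would compute the initial series $F_i$ for $0\le i\le \ell$ directly from \propref{secondrecursion}(i): the multiplicity $[D(\ell,s):D(\ell+1,i)]_q$ is nonzero only for $s=2\ell j+i$ ($j\ge 0$) and $s=2\ell j-i$ ($j\ge 1$), each with value $1$ at $q=1$. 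Translating to the exponent $k=(s-i)/2$, the two families are disjoint for $1\le i\le \ell-1$ and give $F_i = \sum_{j\ge 0}x^{\ell j} + \sum_{j\ge 1}x^{\ell j - i}$, whence $(1-x^\ell)F_i = 1+x^{\ell-i}$; for $i\in\{0,\ell\}$ the two families coincide and collapse to a single geometric series with $(1-x^\ell)F_i = 1$. These are precisely the entries of $K\mathbf 1$, establishing the base case.

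The main obstacle is step (i): it requires carefully converting the indices $n+\ell$ and $n+2r_n$ of \thmref{thm:level_l_recc} into block coordinates to arrive at $(\star_j)$, and then recognizing that the factor $1-x^\ell$ is produced exactly by the involution $j\mapsto \ell-j$ applied to these relations. Matching the reflected coefficient $x^{\ell-j}\mathbf F^{(p)}_{\ell+1-j}$ with the antidiagonal block $K_2$ is what pins down the precise shape of $K$, and getting this index bookkeeping right (including the boundary components and the self-paired case $\ell=2j$) is the crux of the argument; steps (ii) and the final induction are then routine.
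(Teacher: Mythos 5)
Your proposal is correct and is essentially the paper's own proof: the same block decomposition with the transfer matrix $K$, the same base-case computation of $F_0,\dots,F_\ell$ from Proposition~(secondrecursion)(i) at $q=1$, and the same mechanism for producing the factor $1-x^\ell$ (your substitution of $(\star_{\ell-j})$ into $(\star_j)$ is exactly the paper's solving of the paired equations for $n$ and $n'=n+2r_n-\ell$ in its Case~(i), with your boundary components $j=0,\ell$ matching its Cases~(ii),(iii)). The only difference is presentational — you state the one-step identity $(1-x^\ell)\mathbf{F}^{(p+1)}=K\mathbf{F}^{(p)}$ explicitly and verify it componentwise, while the paper phrases the induction as $\zeta_p=K\zeta_{p-1}$ — so there is nothing substantive to add.
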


\subsection{} {\em Proof of Theorem \ref{thm:level_l_recc}.}
To simplify notation, we fix $\ell \geq 1$, and for $s,n\in\bz_+$, set
$$ \numm{s}{n} := [D(\ell,s):D(\ell+1,n)]_{q=1}.$$
Recall that $\numm{s}{n} =0$ if $s < n$.
 The theorem follows if we prove that for all $s, n\ge 0$, we have
\beq\label{eq: nmultassert}
\numm{s}{n}= \begin{cases}
 \numm{s+\ell}{n+\ell}-\numm{s}{n+2r_n} &\text{if } (\ell+1) \nmid n.\\ \\
 \numm{s+\ell}{n+\ell} &\text{if } (\ell+1)  \mid  n.
  \end{cases}
\eeq
Notice that this equality holds whenever $s < n$, both sides being zero. Hence we have to prove it only in
the case when $s \ge n$.

Observe that   taking $q=1$ in Proposition \ref{secondrecursion}(i), gives
\beq\label{eq: facttwo}
s \ge 0, \; 0\le n\le \ell\implies
\numm{s}{n} = \begin{cases} 1 & \text{if } s+n \text{ or } s-n \text{ is a multiple of } 2\ell.\\
0 & \text{ otherwise}.
\end{cases}
\eeq
and taking $q=1$ in Proposition \ref{secondrecursion}(ii)   with $0<s_0\le \ell$, $s_1 >0$
and $s=\ell s_1+s_0$  gives
\beq\label{eq: factone} s\geq \ell+1, \; n\ge\ell+1\implies
\numm{s}{n} = \numm{s-\ell-1}{n-\ell-1} + \numm{s-2s_0}{n}.
\eeq
Observe in the last equation that $s-2s_0 = \ell s_1 - s_0 \geq 0$.

 We now proceed to prove \eqref{eq: nmultassert} by induction on $n$. To see that induction  begins, we  first prove that this assertion holds when $n=0$ for all $s\ge 0$. Using equation \eqref{eq: facttwo} it follows trivially  that $\numm{s}{0}=\numm{s+\ell}{\ell}$ as required.

Now let $n>0$. Assume that we have proved that $\numm{s}{n'}$ satisfies \eqref{eq: nmultassert} for all $0\le n'<n$ and for all $s\in\bz_+$.  We proceed by induction on $s$ to prove that   $\numm{s}{n}$  satisfies \eqref{eq: nmultassert}  for all $s\in\bz_+$. Notice that this induction begins at $s=0$ since both sides of  \eqref{eq: nmultassert} are then zero. Further, as remarked earlier, this equality holds for $s < n$; so we can further assume that $s \ge n$. Now, assume that we have proved the result for all $s'$ with $0\le s'<s$.  We have to consider two cases.

\noindent
{\em Case 1:} Suppose $0<n\le \ell$, and $s \geq n$. In this case we have $ n=\ell+1-r_n$ and we have  to prove that
$$\numm{s}{n}= \numm{s+\ell}{n+\ell}-\numm{s}{2\ell+2-n}.$$

{\em Case 1(a):} Suppose $s \geq \ell+1$. Then \eqref{eq: factone}  can be used for  both terms of the right hand side and we get
 \begin{align*}
\numm{s+\ell}{n+\ell}&=\numm{s-1}{n-1}+\numm{s+\ell-2s_0}{n+\ell},\\
\numm{s}{2\ell+2-n}&=\numm{s-\ell-1}{\ell+1-n}+\numm{s-2s_0}{2\ell+2-n}.
\end{align*}
Set
$$T_1 = \numm{s-1}{n-1} - \numm{s-\ell-1}{\ell+1-n}$$ and $$\qquad T_2 =  \numm{s+\ell-2s_0}{n+\ell} - \numm{s-2s_0}{2\ell+2-n}.$$

Equation   \eqref{eq: facttwo} applies to both the terms in $T_1$.  Now observing that:
\begin{align*}
(s-1) - (n-1) &= (s-\ell-1) + (\ell+1-n) \\
(s-1) + (n-1) &\equiv (s-\ell-1) - (\ell+1-n) \pmod{2\ell},
\end{align*}
we deduce that $T_1 =0$. Further, since $s-2s_0 < s$, the inductive hypothesis gives $T_2 = \numm{s-2s_0}{n}$.  We must thus prove that  $\numm{s}{n} = \numm{s-2s_0}{n}$. Since $s \equiv s_0 \pmod{\ell}$, we obtain $s - 2s_0 \equiv -s \pmod{2\ell}$; hence $s \pm n \equiv (s-2s_0) \mp n \pmod{2\ell}$; applying \eqref{eq: facttwo} completes the proof.

{\em Case 1(b):} Suppose $s \leq \ell$. Then since $2\ell + 2 -n > \ell$, we have
$\numm{s}{2\ell+2-n} =0$. We thus need to show that
$\numm{s}{n}= \numm{s+\ell}{n+\ell}$. Applying equation \eqref{eq: factone} again:
$$\numm{s+\ell}{n+\ell}= \numm{s-1}{n-1}+\numm{s+\ell-2s_0}{n+\ell}.$$
But since  $0 < s \leq \ell$, we have $s = s_0$, and hence $s+\ell-2s_0 < \ell < n+\ell$. Thus the second term vanishes. We need to now show that $\numm{s-1}{n-1} = \numm{s}{n}$. But from \eqref{eq: facttwo}, it is clear that for $1 \leq s,n \leq \ell$, $\numm{s-1}{n-1} = \numm{s}{n} = \delta_{s,n}$ .
This completes Case 1 of the inductive step.

\noindent
{\em Case 2:} Suppose $n \ge \ell+1$ and $s \ge n$.
Suppose first that $\ell + 1 \nmid n$. Consider
$$S = \numm{s+\ell}{n+\ell} - \numm{s}{n+2r_n} - \numm{s}{n}.$$ By applying \eqref{eq: factone} to each of these terms,
we have
\begin{align*}S &=\numm{s-1}{n-1}+\numm{s+\ell-2s_0}{n+\ell}-\numm{s-\ell-1}{n+2r_n-\ell-1}-\numm{s-2s_0}{n+2r_n}\\ &-\numm{s-\ell-1}{n-\ell-1}-\numm{s-2s_0}{n}.
\end{align*}
Since $n-\ell-1<n$ and $s-2s_0<s$, the inductive hypothesis gives
\begin{align*}
\numm{s-\ell-1}{n-\ell-1}&=\numm{s-1}{n-1}-\numm{s-\ell-1}{n+2r_n-\ell-1},\\
\numm{s-2s_0}{n}&=\numm{s+\ell-2s_0}{n+\ell}-\numm{s-2s_0}{n+2r_n}.
\end{align*}
Using these equations to replace $\numm{s-\ell-1}{n-\ell-1}$ and $\numm{s-2s_0}{n}$ in our equation for $S$, we obtain $S=0$ as required.

Now, suppose $\ell + 1 \mid n$. Consider $S'=\numm{s+\ell}{n+\ell}-\numm{s}{n}$. As in the case for $\ell + 1 \nmid n$, apply \eqref{eq: factone} to each term to get
$$S'=\numm{s-1}{n-1}+\numm{s+\ell-2s_0}{n+\ell}-\numm{s-\ell-1}{n-\ell-1}-\numm{s-2s_0}{n}.$$
Since $n-\ell-1<n$, $s-2s_0<s$ and $\ell + 1 \mid (n-\ell-1)$, the inductive hypothesis gives
$$
\numm{s-\ell-1}{n-\ell-1}=\numm{s-1}{n-1},\qquad \numm{s-2s_0}{n}=\numm{s+\ell-2s_0}{n+\ell}.
$$
This gives us $S'=0$ as required. \qed

\subsection{} {\em Proof of Proposition \ref{closedform}}
\begin{proof}
Let $n \geq 0$, with $n = (\ell+1)p_n - r_n$ and $0 \leq r_n \leq \ell$. We consider three cases in
equation \eqref{eq:lev_l_recc}: \\
(i) $1 \leq r_n\leq \ell-1$. In this case, define
$$n^\prime = n + 2r_n -\ell = (\ell+1)p_n - (\ell-r_n),$$
and consider $\qm{\ell}{\ell+1}{n}(x,1)$ and $\qm{\ell}{\ell+1}{n^\prime}(x,1)$.
Equation \eqref{eq:lev_l_recc} gives us the system of equations:
\begin{align*}
\qm{\ell}{\ell+1}{n}(x,1) &= \qm{\ell}{\ell+1}{n+\ell}(x,1) - x^{r_n} \,\qm{\ell}{\ell+1}{n^\prime + \ell}(x,1) \\
\qm{\ell}{\ell+1}{n^\prime}(x,1) &= \qm{\ell}{\ell+1}{n^\prime + \ell}(x,1) - x^{\ell-{r_n}} \,\qm{\ell}{\ell+1}{n + \ell}(x,1)
\end{align*}
(this becomes a single equation if $r_n = \ell-r_n$, i.e., if $n = n^\prime$).
Solving, we obtain:
\begin{equation} \label{eq: case1}
\qm{\ell}{\ell+1}{n + \ell}(x,1) = \frac{1}{1-x^\ell} \, \left( \qm{\ell}{\ell+1}{n}(x,1)  + x^{r_n}\,\qm{\ell}{\ell+1}{n^\prime}(x,1) \right)
\end{equation}

\medskip
\noindent
(ii) $r_n=0$, i.e., $n=(\ell+1)p_n$. Here we obtain $\qm{\ell}{\ell+1}{n + \ell}(x,1) = \qm{\ell}{\ell+1}{n}(x,1)$ \\
\medskip
(iii) $r_n = \ell$, i.e., $n=(\ell+1)p_n -\ell$. Then, $ \qm{\ell}{\ell+1}{n}(x,1) =  \qm{\ell}{\ell+1}{n + \ell}(x,1) - x^\ell \, \qm{\ell}{\ell+1}{(\ell+1)p_n+\ell}(x,1)$. Using case (ii) above, we obtain $\qm{\ell}{\ell+1}{n}(x,1) = (1-x^\ell) \qm{\ell}{\ell+1}{n+\ell}(x,1)$.

Now, for $n,p \ge 0$, define
$$d_n := \qm{\ell}{\ell+1}{n}(x,1) \cdot {(1-x^{\ell})^{\left\lfloor \frac{n}{\ell+1} \right\rfloor +1}},$$ and
$$ \zeta_p : = \begin{bmatrix} d_{(\ell+1)p} & d_{(\ell+1)p+1} & \cdots &
  d_{(\ell+1)p+\ell}  \end{bmatrix}^{T}.$$
We will prove by induction that $$\zeta_p=K^{p+1}\begin{bmatrix} 1 & 1 & \cdots &  1\end{bmatrix}^T$$ for $p\ge 0.$
When $p=0$, we use equation \eqref{eq: facttwo} to get
\[
d_n=(1-x^{\ell})\qm{\ell}{\ell+1}{n}(x,1) = \begin{cases} (1-x^{\ell})\sum_{k\ge 0}x^{\ell k},\ \ n=0, \ell\\ (1-x^{\ell})(\sum_{k\ge 0}x^{\ell k} + \sum_{k\ge 1}x^{\ell k -n}),\ \ 0<n<\ell.\end{cases}
\]
Thus, we have
\[
d_n=\begin{cases}1,\ \ n=0,\ell\\ 1+x^{\ell-n},\ \ 0<n<\ell.\end{cases}
\]
These polynomials are the entries in $\zeta_0$ and satisfy $\zeta_0=K\begin{bmatrix} 1 & 1 & \cdots &  1\end{bmatrix}^T.$
Now, let $p \geq 1$ and assume
\[
\zeta_{p-1}=K^{p}\begin{bmatrix} 1 & 1 & \cdots &  1\end{bmatrix}^T.
\]
We now have
$
K\zeta_{p-1}=K_1\zeta_{p-1}+K_2\zeta_{p-1},
$
where
\[
K_1\zeta_{p-1}=\begin{bmatrix} d_{(\ell+1)(p-1) + 1} & \cdots & d_{(\ell+1)(p-1) + \ell} & 0\end{bmatrix}^T,
\]
and
\[
K_2\,\zeta_{p-1}=\begin{bmatrix} 0& x^{\ell-1}d_{(\ell+1)(p-1) + \ell} & x^{\ell-2}d_{(\ell+1)(p-1) + \ell-1} & \cdots & d_{(\ell+1)(p-1) + 1}\end{bmatrix}^T.
\]
Dividing these vectors by $(1-x^{\ell})^{p+1}$, the equations \eqref{eq: case1} for $0<r<\ell$ and the cases for $r=0,\,\ell$ give us that $(K_1+K_2 )\zeta_{p-1}=\begin{bmatrix} d_{(\ell+1)p} & d_{(\ell+1)p+1} & \cdots & d_{(\ell+1)p+\ell}\end{bmatrix}=\zeta_p$. Then, by the inductive hypothesis, we have $$\zeta_p=K\zeta_{p-1}=KK^p\begin{bmatrix} 1 & 1 & \cdots &  1\end{bmatrix}^T$$ as desired.

\end{proof}

\subsection{}
Finally, we consider the general case, i.e., the multiplicities of
level $m$ Demazure modules in level $\ell$ Demazure modules for any $m
\geq \ell$. For $n \geq 0$, define
 $$ \qmtil{\ell}{m}{n}(x,q) = \sum_{s \geq 0} [D(\ell,s):D(m,n)]_{q}\,x^s.$$
Since the coefficient of $x^s$ is zero unless $s-n$ is a
non-negative even integer, we have $\qmtil{\ell}{m}{n}(x,q) = x^n\,\qm{\ell}{m}{n}(x^2,q)$ .
\begin{proposition}\label{prop: level_lm}
Let $1 \leq \ell \leq m$ and $n \geq 0$.
Let  $\beta_r(x) \in \pseries$, $0 \leq r < \ell$, be the unique power series such that
$$\qmtil{\ell}{m}{n}(x,1)=\sum_{r=0}^{\ell-1}\,
x^r\,\beta_{r}(x^{\ell}).$$
Then we have
$$\qmtil{1}{m}{n}(x,1)=\sum_{r=0}^{\ell-1} \,\qmtil{1}{\ell}{r}(x,1) \;\beta_{r}(y^{\ell}),$$
where $ y = {x}/{P_\ell(x^2)^\frac{1}{\ell}}$.
\end{proposition}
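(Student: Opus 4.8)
The plan is to combine the three-level factorization identity \eqref{difflevel2} with the explicit Chebyshev closed form for the level-one-to-$\ell$ numerical multiplicities from Corollary \ref{cor: num_mult_level_1m}. The whole argument is a manipulation of formal power series in $x$, so I would first note that all rearrangements below are legitimate because the relevant multiplicities vanish outside a finite range.

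First I would specialize \eqref{difflevel2} at $q=1$ to the three levels $1 \le \ell \le m$, giving
$$[D(1,s):D(m,n)]_1 = \sum_{p \ge 0}[D(1,s):D(\ell,p)]_1\,[D(\ell,p):D(m,n)]_1.$$
Multiplying by $x^s$, summing over $s \ge 0$, and interchanging the two sums (legitimate since $[D(1,s):D(\ell,p)]_1=0$ unless $p \le s$, so only finitely many $p$ feed each power of $x$) yields
$$\qmtil{1}{m}{n}(x,1) = \sum_{p \ge 0}[D(\ell,p):D(m,n)]_1\,\qmtil{1}{\ell}{p}(x,1). \qquad (\ast)$$
Next I would reinterpret the coefficients through the $\beta_r$ decomposition: the coefficient of $x^p$ in $\qmtil{\ell}{m}{n}(x,1)$ is by definition $[D(\ell,p):D(m,n)]_1$, so writing $p=\ell k+r$ with $0\le r<\ell$ and $\beta_r(z)=\sum_k b_{r,k}z^k$, the hypothesis $\qmtil{\ell}{m}{n}(x,1)=\sum_r x^r\beta_r(x^\ell)$ identifies $[D(\ell,\ell k+r):D(m,n)]_1 = b_{r,k}$.

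The heart of the argument is to compute $\qmtil{1}{\ell}{\ell k+r}(x,1)$ and exhibit the factorization that produces $y$. Using $\qmtil{1}{\ell}{n'}(x,1)=x^{n'}\qm{1}{\ell}{n'}(x^2,1)$ together with Corollary \ref{cor: num_mult_level_1m} (with the role of $m$ there played by $\ell$), I would obtain for $n'=\ell k+r$
$$\qmtil{1}{\ell}{\ell k+r}(x,1) = x^{\ell k+r}\,\frac{P_{\ell-r-1}(x^2)}{P_\ell(x^2)^{k+1}} = \left(x^r\,\frac{P_{\ell-r-1}(x^2)}{P_\ell(x^2)}\right)\left(\frac{x^\ell}{P_\ell(x^2)}\right)^{k} = \qmtil{1}{\ell}{r}(x,1)\,(y^\ell)^k,$$
where the last equality recognizes the $k=0$ factor as $\qmtil{1}{\ell}{r}(x,1)$ and uses $y^\ell = x^\ell/P_\ell(x^2)$; this is exactly $y=x/P_\ell(x^2)^{1/\ell}$, and it makes sense as a power series because $P_\ell(0)=1$ forces $P_\ell(x^2)^{1/\ell}$ to have constant term $1$.

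Finally, substituting this factorization into $(\ast)$, grouping $p=\ell k+r$, and resumming over $k$ recovers $\beta_r$:
$$\qmtil{1}{m}{n}(x,1) = \sum_{r=0}^{\ell-1}\sum_{k\ge 0} b_{r,k}\,\qmtil{1}{\ell}{r}(x,1)\,(y^\ell)^k = \sum_{r=0}^{\ell-1}\qmtil{1}{\ell}{r}(x,1)\,\beta_r(y^\ell),$$
which is the claimed identity. I expect the only genuine obstacle to be the index bookkeeping in the key step: one must match $p=\ell k+r$ so that the Chebyshev factor $P_{\ell-r-1}/P_\ell$ (depending only on $r$) separates cleanly from the purely $k$-dependent geometric factor, and check that the prescribed $y$ is precisely the substitution absorbing $x^\ell/P_\ell(x^2)$. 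Once that separation is confirmed, the rest is routine formal-series manipulation whose validity is guaranteed by the support conditions on the multiplicities.
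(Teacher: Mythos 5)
Your proposal is correct and follows essentially the same route as the paper's own proof: both specialize \eqref{difflevel2} at $q=1$ to obtain $\qmtil{1}{m}{n}(x,1)=\sum_{u\ge 0}[D(\ell,u):D(m,n)]_{q=1}\,\qmtil{1}{\ell}{u}(x,1)$, then invoke Corollary \ref{cor: num_mult_level_1m} to factor $\qmtil{1}{\ell}{\ell k+r}(x,1)=\qmtil{1}{\ell}{r}(x,1)\,\bigl(x^\ell/P_\ell(x^2)\bigr)^{k}$, and finally regroup the sum by the residue $r$ to recognize the inner sum as $\beta_r(y^\ell)$. Your extra remarks (legitimacy of the sum interchange and the fact that $P_\ell(x^2)^{1/\ell}$ is a well-defined power series with constant term $1$) are minor refinements of the identical argument rather than a different method.
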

\begin{proof}
Let $\qmtil{\ell}{m}{n}(x,1)= \sum_{k=0}^\infty c_k x^k$.  For $k \geq 0$, letting
 $a(k), b(k)$ denote the unique integers such that $k = \ell a(k) + b(k)$ with $0 \leq b(k) < \ell$, we obtain
\beq \label{eq: anbn}
\beta_r(x) = \sum_{\{k: \,b(k) =r\}} c_k \, x^{a(k)}
\eeq
We now have
\begin{align}
\qmtil{1}{m}{n}(x,1) &= \sum_{s \geq 0}\left[D(1,s):D(m,n)\right]_{q=1}\,x^s = \sum_{s \geq 0}\sum_{u \geq 0}\left[D(1,s):D(\ell,u)\right]_{q=1} \, \left[D(\ell,u):D(m,n)\right]_{q=1} \,x^s
\notag\\
&= \sum_{u \geq 0} c_u \,\qmtil{1}{\ell}{u}(x,1) \label{eq: atcalc}
\end{align}
Corollary \ref{cor: num_mult_level_1m} implies that $\qmtil{1}{\ell}{u}(x,1) = \qmtil{1}{\ell}{b(u)}(x,1) \left[ \frac{x^\ell}{P_\ell(x^2)} \right]^{a(u)}$. Substituting this into equation \eqref{eq: atcalc}:
$$ \qmtil{1}{m}{n}(x,1) = \sum_{r=0}^{\ell-1} \, \qmtil{1}{\ell}{r}(x,1) \left( \sum_{\substack{u \geq 0 \\ b(u) = r}} c_u \left[ \frac{x^\ell}{P_\ell(x^2)} \right]^{a(u)} \right).$$
From equation \eqref{eq: anbn}, the inner sum is just $\beta_r(y^\ell)$ with $y = {x}/{P_\ell(x^2)^\frac{1}{\ell}}$, and the proof is complete.
\end{proof}

\medskip
\begin{cor}
Let $m \geq 2, n \geq 0$. Then $$\qm{2}{m}{n}(x,1)=\left(\frac{1}{1+x}\right)^{\left\lfloor \frac{n}{2} \right\rfloor +1}\,\qm{1}{m}{n}\left(\frac{x}{1+x},1\right).$$	
\end{cor}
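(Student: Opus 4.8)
The plan is to specialize Proposition~\ref{prop: level_lm} to the case $\ell = 2$ and then translate the statement from the $\qmtil{}{}{}$--series back to the $\qm{}{}{}$--series using the identity $\qmtil{\ell}{m}{n}(x,1) = x^n\,\qm{\ell}{m}{n}(x^2,1)$. First I would record the two ingredients that the proposition needs for $\ell=2$. From the recurrence \eqref{eq:cheby_recc} one computes $P_2(x) = P_1(x) - xP_0(x) = 1-x$, so that the substitution variable is $y = x/P_2(x^2)^{1/2} = x/(1-x^2)^{1/2}$, and hence $y^2 = x^2/(1-x^2)$. Next, Corollary~\ref{cor: num_mult_level_1m} with $m=2$ gives $\qm{1}{2}{0}(x,1) = P_1(x)/P_2(x) = 1/(1-x)$ and $\qm{1}{2}{1}(x,1) = P_0(x)/P_2(x) = 1/(1-x)$, whence $\qmtil{1}{2}{0}(x,1) = 1/(1-x^2)$ and $\qmtil{1}{2}{1}(x,1) = x/(1-x^2)$; these are the two coefficient series appearing in the conclusion of the proposition.

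The next step is the parity bookkeeping that collapses the sum over $r\in\{0,1\}$ to a single term. Writing $\epsilon = n \bmod 2$, every exponent occurring in $\qmtil{2}{m}{n}(x,1) = x^n\,\qm{2}{m}{n}(x^2,1)$ is congruent to $\epsilon$ modulo $2$; comparing with $\qmtil{2}{m}{n}(x,1) = \beta_0(x^2) + x\,\beta_1(x^2)$ forces $\beta_{1-\epsilon} = 0$ and $\beta_\epsilon(z) = z^{(n-\epsilon)/2}\,\qm{2}{m}{n}(z,1)$. Feeding this into Proposition~\ref{prop: level_lm} leaves the single term
\[
\qmtil{1}{m}{n}(x,1) = \qmtil{1}{2}{\epsilon}(x,1)\,\beta_\epsilon(y^2) = \frac{x^\epsilon}{1-x^2}\,(y^2)^{(n-\epsilon)/2}\,\qm{2}{m}{n}(y^2,1).
\]

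Finally I would put $z = x^2$, so that $y^2 = z/(1-z)$, and use $\qmtil{1}{m}{n}(x,1) = x^n\,\qm{1}{m}{n}(z,1)$ together with the factorization $x^n = x^\epsilon z^{(n-\epsilon)/2}$ to cancel $x^\epsilon z^{(n-\epsilon)/2}$ from both sides. Since $\epsilon = n\bmod 2$ gives $(n-\epsilon)/2 = \floor{n/2}$, this yields
\[
\qm{2}{m}{n}\!\left(\frac{z}{1-z},1\right) = (1-z)^{\floor{n/2}+1}\,\qm{1}{m}{n}(z,1),
\]
and the substitution $w = z/(1-z)$, i.e.\ $z = w/(1+w)$ and $1-z = 1/(1+w)$, is exactly the stated formula after renaming $w$ to $x$. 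The computations are all routine; the only point demanding care is keeping the tilde/untilde conversion and the change of variable $y^2 = x^2/(1-x^2)$ consistent with the parity of $n$, which is precisely what the factor $x^\epsilon$ accounts for, so I expect the main (and comparatively minor) obstacle to be the bookkeeping of half-integer powers rather than any genuine difficulty.
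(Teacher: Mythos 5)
Your proof is correct and takes essentially the same route as the paper: the paper's own proof consists of the single remark that the corollary follows by taking $\ell=2$ in Proposition \ref{prop: level_lm} and rewriting everything in terms of the $A$-series, which is exactly what you have done. Your computation of $P_2(x)=1-x$, the identification $\beta_{1-\epsilon}=0$ with $\beta_\epsilon(z)=z^{\lfloor n/2\rfloor}\,\qm{2}{m}{n}(z,1)$, and the final change of variable $z=x/(1+x)$ are precisely the omitted bookkeeping, carried out correctly.
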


\begin{proof}
This follows by taking $\ell = 2$ in Proposition \ref{prop: level_lm},
and rewriting everything in terms of the $A_n$.

\end{proof}

\begin{rem}
Fix $\ell \geq 1$.
Let $R$ denote the $\complex$-algebra $\pseries$, and $S$ be the
subalgebra $\pseries[x^\ell]$. Then, $R$ is a free $S$-module of rank
$\ell$. Further, for any units $u_0, u_1, \cdots, u_{\ell -1}$ in $R$,
the set $\{u_r \,x^r: 0 \leq r < \ell \}$ is an
$S$-basis of $R$. Consider the following two choices of basis:
$$\mcb_1= \{x^r: 0 \leq r <\ell\}; \;\;\; \mcb_2=\{\qmtil{1}{\ell}{r}(x,1): 0 \leq r < \ell\}.$$
The latter forms a basis since $\qmtil{1}{\ell}{r}(x,1) =
x^r\,\qm{1}{\ell}{r}(x^2,1)$ and $\qm{1}{\ell}{r}(x^2,1)$ is a unit in
$R$ since its constant term is 1. Now, the map
$$\phi: \pseries[y] \to \pseries \text{ defined by } y \mapsto
\frac{x}{P_\ell(x^2)^\frac{1}{\ell}}$$
is an isomorphism of algebras. Since $\phi^{-1}(x) = uy$ for some unit
$u \in \pseries[y]$, it clear that the pull-back $\mcb^\prime_2 = \{
\phi^{-1}(b): b \in \mcb_2\}$ of $\mcb_2$ is of the form $\{u_r \,
y^r: 0 \leq r < \ell\}$ for some units $u_r$ in $\pseries[y]$. Hence
$\mcb^\prime_2$ is a basis of $R^\prime = \pseries[y]$ over $S^\prime
= \pseries[y^\ell]$.

Now, suppose we are given $m \geq \ell$ and $n \geq 0$. To obtain the
generating series  $\qmtil{\ell}{m}{n}(x,1) \in R$, it is enough to
obtain its coordinates $\beta_r(x^\ell) \in S$ with respect to the
basis $\mcb_1$. Proposition \ref{prop: level_lm} gives us a way of
determining the $\beta_r$ (in principle). Consider $F = \qmtil{1}{m}{n}(x,1) \in R$; this
is known in closed form by Theorem \ref{thm:num_mult_level_1m}. The
coordinates of $F^\prime = \phi^{-1}(F) \in R^\prime$ with respect to
the basis $\mcb^\prime_2$ are precisely the $\beta_r(y^\ell)$.
\end{rem}

\section{The functions $\qm{\ell}{\ell+1}{n}(x,q)$ when $\ell=1,2$ and mock theta functions}\label{qmultsec}
In this section, we prove Proposition \ref{2to3} and Theorem \ref{mocktheta}.

\subsection{} We first use Proposition \ref{secondrecursion} to give closed formulae for $\qm{2}{3}{n}(x,q)$. In terms of generating series, Proposition \ref{secondrecursion}(i) gives, \begin{equation}\label{2to3initial}
\qm{2}{3}{0}(x,q) = \sum_{s\ge0} q^{2s^2}x^{2s},\ \ \ \
\qm{2}{3}{1}(x,q) =\sum_{s\ge 0}q^{\frac{s(s+1)}{2}}x^{s}, \ \ \ \
\qm{2}{3}{2}(x,q) = \sum_{s\ge 0}q^{2s(s+1)}x^{2s}.
\end{equation}
 and Proposition \ref{secondrecursion}(ii) gives for $k \geq 3$,
\begin{equation}\label{eq: brecc}
\qm{2}{3}{k-3}(xq,q)  = \begin{cases}
\qm{2}{3}{k}(x,q) - xq^{\frac{k+1}{2}}\,\qm{2}{3}{k}(xq, q) & \text{ if $k$ is
  odd.}\\
\\
\qm{2}{3}{k}(x,q) - x^2q^{k+2}\,\qm{2}{3}{k}(xq^2, q) &\text{   if $k$ is even.}
\end{cases}
\end{equation}
 We have the following result which solves this recurrence explicitly.
\begin{proposition} \label{prop: bnew_exp}
Let $r \in \{0, 1, 2\}$ and $s \geq 0$, and set $$\rbar =\begin{cases} 1,\ \ \ r=1,\\  0,\ \ r=0,\,2.\end{cases}$$
Then, we have
\begin{equation}\label{eq: bnew_exp}
\qm{2}{3}{3s + r}(x,q) = \sum_{p=0}^\infty x^p \,q^{\frac{1}{2}(p^2 +
  p(2s+r))} \sum_{\substack{j=0 \\ j \equiv p \!\!\!\!\!\pmod{2}}}^p
q^{j(j-\rbar)/2} \qbinom{\frac{p-j}{2} + s}{s}
\,\qbinom{\floor{\frac{s+1+\rbar}{2}}}{j}.
\end{equation}
\end{proposition}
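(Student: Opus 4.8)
The plan is to prove the closed-form expression in Proposition~\ref{prop: bnew_exp} by verifying that the right-hand side satisfies both the initial conditions \eqref{2to3initial} and the recurrence \eqref{eq: brecc}, and then invoking the fact (recorded after Proposition~\ref{secondrecursion}) that these together determine $\qm{2}{3}{n}(x,q)$ uniquely. Denote by $B_{s,r}(x,q)$ the right-hand side of \eqref{eq: bnew_exp}; I want to show $B_{s,r} = \qm{2}{3}{3s+r}$ for all $s \ge 0$ and $r \in \{0,1,2\}$. First I would check the base cases. For $s=0$ the inner sum collapses: $\qbinom{0}{j}$ forces $j=0$, so $B_{0,r}(x,q) = \sum_{p} x^p q^{\frac{1}{2}(p^2+pr)}\qbinom{p/2}{0}$ where the $j\equiv p\pmod 2$ condition (with $j=0$) forces $p$ even when $r\in\{0,2\}$ and leaves $p$ free when $r=1$. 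A direct comparison with \eqref{2to3initial} then matches $\qm{2}{3}{0}$, $\qm{2}{3}{1}$, and $\qm{2}{3}{2}$ on the nose, using $\rbar=0,1,0$ respectively.

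The main work is the recurrence step. Writing $k = 3s+r$ with the appropriate incrementing of $(s,r)$, I would substitute the closed form $B$ into both sides of \eqref{eq: brecc} and reduce the resulting identity to a finite set of $q$-binomial identities. The key technical tool is the $q$-Pascal recurrence
\begin{equation*}
\qbinom{N}{j} = \qbinom{N-1}{j} + q^{N-j}\qbinom{N-1}{j-1} = q^{j}\qbinom{N-1}{j} + \qbinom{N-1}{j-1},
\end{equation*}
applied to the factor $\qbinom{\floor{(s+1+\rbar)/2}}{j}$, together with the analogous splitting of $\qbinom{(p-j)/2+s}{s}$. The strategy is to match coefficients of $x^p$ on both sides after clearing the substitutions $x \mapsto xq$ and $x\mapsto xq^2$: the substitution rescales each monomial $x^p q^{\frac{1}{2}(p^2+\cdots)}$ by an explicit power of $q$, and the $-xq^{(k+1)/2}$ (resp.\ $-x^2 q^{k+2}$) prefactor shifts the summation index $p \mapsto p-1$ (resp.\ $p\mapsto p-2$). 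After these shifts the claim becomes an equality, for each fixed $p$, between two sums over $j$; collecting the exponents of $q$ and invoking $q$-Pascal on the floor-binomial should telescope the difference to zero. I expect the odd-$k$ and even-$k$ cases to require slightly different bookkeeping because the value of $\rbar$ changes as $r$ cycles $0\to 1\to 2 \to 0$, so I would treat the three residue classes of $k$ modulo $3$ (equivalently, track how $(s,r,\rbar)$ update) as separate but parallel computations.

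The hard part will be managing the exponent arithmetic so that the $q$-powers align exactly after the index shifts: the quadratic exponent $\frac{1}{2}(p^2+p(2s+r)) + j(j-\rbar)/2$ must be reconciled against the shifted version with $p$ replaced by $p-1$ or $p-2$ and with $(s,r)$ updated, and a single sign or half-integer error would break the match. I would therefore organize the verification by isolating, for fixed $p$, the coefficient identity in the single variable $j$ and checking that the accumulated powers of $q$ from (a) the rescaling $x\mapsto xq^c$, (b) the prefactor $q^{(k+1)/2}$ or $q^{k+2}$, and (c) the change in the quadratic form under $p \mapsto p-\delta$ combine to leave precisely the power demanded by $q$-Pascal. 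The parity constraint $j \equiv p \pmod 2$ interacts with the index shift (shifting $p$ by an odd amount in the odd case flips the parity class of the surviving $j$), and keeping this consistent across the recurrence is the most error-prone bookkeeping; once the exponents are shown to align, the remaining identity is exactly a single application of the $q$-Pascal rule and the proof closes by the uniqueness remark following Proposition~\ref{secondrecursion}.
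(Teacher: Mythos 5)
Your proposal follows essentially the same route as the paper's proof: check the $s=0$ case against \eqref{2to3initial}, verify the recurrence \eqref{eq: brecc} by matching coefficients of $x^p$ after the substitutions $x \mapsto xq$, $x \mapsto xq^2$, and reduce to $q$-Pascal identities (the paper needs one application per summand in the even-$k$ case, and two applications plus a reindexing $j \mapsto j-2$ in the odd-$k$ case), then conclude by the uniqueness of the solution to the recursion. One small correction to your bookkeeping: since $k-3 \equiv k \pmod 3$, the residue $r$ (hence $\rbar$) stays \emph{fixed} along the recurrence and only $s$ decrements, so the case split is governed by the parity of $k$ alone, not by $r$ cycling through $0 \to 1 \to 2 \to 0$.
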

\begin{proof}
We check the initial conditions first. Let $s=0$; in this case, the inner sum in equation \eqref{eq: bnew_exp} equals 1 if either (i) $\rbar =1$, or (ii) $\rbar=0$ and $p \equiv 0 \pmod{2}$, and is zero otherwise. From this, it is clear that \eqref{eq: bnew_exp} reduces to equations (\ref{2to3initial}) when $s=0$.

Next, for $s \geq 1$, we verify that the recurrence relation \eqref{eq: brecc} holds.
We set $\alpha(s) = \floor{\frac{s + \rbar +1}{2}}$ and $\beta(j,p,s) = \frac{p-j}{2} + s$.

First, suppose $k = 3s+r$ is even. Then, $s + \rbar$ is even, and the recurrence in
Equation \eqref{eq: brecc} is equivalent to the statement that the following sum vanishes for all $p \geq 0$:
\beq \label{eq: evenrecstat}
\sum_{\substack{j=0 \\ j \equiv p \!\!\!\!\!\pmod{2}}}^p
\!\!\!\! q^{j(j-\rbar)/2} \qbinom{\alpha(s)}{j} \left( \qbinom{\beta(j,p,s)}{s} - \qbinom{\beta(j,p,s-1)}{s-1} - q^s\,\qbinom{\beta(j,p-1,s)}{s}\right).
\eeq
Notice that $\beta(j,p,s-1)=\beta(j,p-1,s)=\beta(j,p,s)-1$.
But, from the $q$-binomial identity
\beq\label{eq: qbid}
\qbinom{a}{b} = \qbinom{a-1}{b}+q^{a-b}\, \qbinom{a-1}{b-1}
\eeq
with $a = \beta(j,p,s), \,b=\beta(j,p,s)-s$, we see that each summand in \eqref{eq: evenrecstat} is in fact zero. This proves the recurrence relation for $k$ even.

Next, let $k = 3s + r$ be odd. In this case, the recurrence relation of Equation \eqref{eq: brecc} is equivalent to the statement that the following sum vanishes for all $p\geq 0$:
\beq\label{eq: 23exp_oddsimp}
\!\!\!\! \sum_{\substack{j=0 \\ j \equiv p \!\!\!\!\!\!\pmod{2}}}^p \!\!\!\!\!\!q^{j(j-\rbar)/2} \left( \qbinom{\beta(j,p,s)}{s}\qbinom{\alpha(s)}{j}-q^{\alpha(s)-j}\qbinom{\beta(j,p,s)}{s}\qbinom{\alpha(s)}{j-1} -\qbinom{\beta(j,p,s-1)}{s-1}\qbinom{\alpha(s-1)}{j}\right).
\eeq

Notice that $\beta(j,p,s-1)=\beta(j,p,s)-1$ and $\alpha(s-1)=\alpha(s)-1$ since $s+\rbar$ is odd.
Using the identity \eqref{eq: qbid} twice in succession, first with $(a,b)=(\alpha(s), j)$ and then with $(a,b) = (\alpha(s), j-1)$, we obtain:
\beq \label{eq: intermediate_1}
\qbinom{\alpha(s)}{j} = \qbinom{\alpha(s)-1}{j} + q^{\alpha(s)-j}\,\qbinom{\alpha(s)}{j-1} - q^{2(\alpha(s)-j)+1}\, \qbinom{\alpha(s)-1}{j-2}.
\eeq
Similarly, choosing $a = \beta(j,p,s)$ and $b = \beta(j,p,s)-s$ in \eqref{eq: qbid} gives:
\beq\label{eq: intermediate_2}
\qbinom{\beta(j,p,s)-1}{s-1} = \qbinom{\beta(j,p,s)}{s} -q^s\, \qbinom{\beta(j,p,s)-1}{s}.
\eeq
Substituting Equations \eqref{eq: intermediate_1}, \eqref{eq: intermediate_2} into the first and third terms of \eqref{eq: 23exp_oddsimp} respectively, and simplifying, the expression in \eqref{eq: 23exp_oddsimp} becomes
$$
\sum_{\substack{j=0 \\ j \equiv p \!\!\!\!\!\pmod{2}}}^p \!\!\! q^{j(j-\rbar)/2} \,
\qbinom{\beta(j,p,s)-1}{s} \,\qbinom{\alpha(s)-1}{j} -
\sum_{\substack{j=0 \\ j \equiv p \!\!\!\!\!\pmod{2}}}^p \!\!\! q^{(j-2)(j-\rbar-2)/2} \,
\qbinom{\beta(j,p,s)}{s} \,\qbinom{\alpha(s)-1}{j-2}.
$$
Re-indexing the second sum with $j^\prime = j-2$ proves this expression is zero. This completes the proof.
\end{proof}
\subsection{} We are now able to deduce Proposition \ref{2to3}. We define $s'=\lfloor\frac{s+1+\rbar}{2}\rfloor$. We first note that for $j> \min(p,s')$, $$\qbinom{\frac{p-j}{2}+s}{s}\qbinom{s'}{j}=0,$$ and thus we can take the inner summation in \eqref{eq: bnew_exp} from $j=0$ to $j=s'$ with $j\equiv p\!\!\pmod {2}$. Extracting the coefficient of $x^p$ in equation (\ref{eq: bnew_exp}), we obtain the explicit polynomial for $[D(2,3s+r+2p):D(3,3s+r)]_q$ in Proposition \ref{2to3}.

\subsection{} Now, we are able to deduce equation (\ref{eq: level13_explicit_form}).
For $n\ge 0$, we have the coefficient of $x^n$ in $\qm{1}{3}{3s+r}(x,q)$ is $[D(1,3s+r+2n):D(3,3s+r)]_q$. Using equation (\ref{difflevel}), we have
\begin{gather*}
[D(1,3s+r+2n):D(3,3s+r)]_q\\ =\sum_{p=0}^n[D(1,3s+r+2n):D(2,3s+r+2p)]_q \, [D(1,3s+r+2p):D(3,3s+r)]_q.
\end{gather*}
We use equation (\ref{1to2}) and Proposition \ref{2to3} to obtain the explicit form of the coefficient of $x^n$ as stated in \eqref{eq: level13_explicit_form}.

\subsection{}We isolate the formulae for $\qm{1}{3}{3s+r}(x,q)$ for $3s+r=0,1,2$. In equation \eqref{eq: level13_explicit_form}, when $3s+r=0$, we have $s=r=\rbar=\sdash=0$ and hence $\qbinom{\sdash}{j}=0$ unless $j=0$. We also have $\gamma (n,p,0)=n^2+(n-p)^2$. Reindexing by $p\mapsto n-p$, we have
\begin{equation}\label{1to3initial0}\qm{1}{3}{0}(x,q) = \sum_{n=0}^\infty x^n \, q^{n^2/2}
\!\!\!\sum_{\substack{p=0 \\ p \equiv n \\\!\!\!\!\!\pmod{2}}}^n
q^{\,p^2/2} \, \qbinom{n}{p}.\end{equation}
For $3s+r=2$, we have $s=\rbar=\sdash=0$, $r=2$ and $\gamma (n,p,0)=n^2+(n-p)^2+2n-p$. Using the same reasoning in the previous case, we obtain
\begin{equation}\label{1to3initial2}
\qm{1}{3}{2}(x,q) = \sum_{n=0}^\infty x^n \, q^{\frac{n^2 + 2n}{2}}
\!\!\!\sum_{\substack{p=0 \\ p \equiv n \\\!\!\!\!\!\pmod{2}}}^n
q^{\frac{p^2}{2}} \, \qbinom{n+1}{p} = (xq^{\frac{1}{2}})^{-1}\sum_{n=1}^\infty x^{n} \, q^{\frac{n^2}{2}}
\!\!\!\sum_{\substack{p=0 \\ p \not\equiv n \\\!\!\!\!\!\pmod{2}}}^n
q^{\frac{p^2}{2}} \, \qbinom{n}{p}.
\end{equation}
Lastly, for $3s+r=1$, we have $s=0$ and $r=\rbar=\sdash=1$ thus $\qbinom{\sdash}{j}=0$ unless $j=0,1$. Since $\gamma(n,p,0)=n^2+(n-p)^2+2n-p=\gamma(n,p,1)$, we have
\begin{equation}\label{1to3initial1}
\qm{1}{3}{1}(x,q) = \sum_{n=0}^\infty x^n\,  q^{\frac{n(n+1)}{2}} \,\sum_{p=0}^n
q^{\frac{p(p+1)}{2}}\qbinom{\,n}{\,p}.
\end{equation}

\subsection{} For the rest of the section we shall be interested in the  specializations  $\qm{1}{3}{n}(q^k,q)$ for $k \in \mathbb{Z}$, $n\in\bz_+$. For this, it is convenient to define \begin{align}
\Phi(x,q) &= \sum_{n=0}^\infty x^n \,q^{n^2} \poch[q^2]{-q}{n}\label{eq: Phi_def}.\\
\Psi(x,q) &= \sum_{n=0}^\infty x^n\,  q^{\frac{n(n+1)}{2}} \poch{-q}{n}\label{eq: Psi_def}.
\end{align}
The following Lemma will be useful. \begin{lemma} \label{prop: phipsirec}
\begin{align}
\Psi(x,q) &=
xq^2 \,\Psi(xq^2, q) + xq \,\Psi(xq, q) + 1. \label{eq: psirec}\\
\Phi(x,q^\frac{1}{2}) &=
xq \,\Phi(xq^2, q^\frac{1}{2}) + xq^\frac{1}{2} \,\Phi(xq, q^\frac{1}{2}) + 1. \label{eq: phirec}
\end{align}
\end{lemma}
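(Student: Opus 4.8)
The plan is to prove both recurrences \eqref{eq: psirec} and \eqref{eq: phirec} by the same mechanism: manipulate the defining series \eqref{eq: Phi_def} and \eqref{eq: Psi_def} using a Pochhammer recursion to peel off the last factor in $\poch{-q}{n}$ (respectively $\poch[q^2]{-q}{n}$), thereby splitting each series into two pieces that reassemble into shifted copies of the same function. Concretely, for $\Psi$ I would start from $\poch{-q}{n} = (1 + q^n)\,\poch{-q}{n-1}$ for $n \geq 1$, so that $\poch{-q}{n} = \poch{-q}{n-1} + q^n\,\poch{-q}{n-1}$. Substituting this into \eqref{eq: Psi_def} (after isolating the $n=0$ term, which contributes the $+1$) gives two sums indexed by $n \geq 1$, each involving $\poch{-q}{n-1}$; re-indexing with $m = n-1$ then expresses each as $\Psi$ evaluated at a scaled argument.

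The key bookkeeping is in the exponents of $q$ and $x$. For the first resulting sum, the summand carries $x^{m+1} q^{(m+1)(m+2)/2}\poch{-q}{m}$; writing $(m+1)(m+2)/2 = m(m+1)/2 + (m+1)$ and collecting, one gets $xq\sum_m (xq)^m q^{m(m+1)/2}\poch{-q}{m} = xq\,\Psi(xq,q)$. For the second sum the extra factor $q^n = q^{m+1}$ adds one more power, yielding the exponent $m(m+1)/2 + 2(m+1)$, which collects to $xq^2\sum_m (xq^2)^m q^{m(m+1)/2}\poch{-q}{m} = xq^2\,\Psi(xq^2,q)$. Adding the isolated $n=0$ term gives exactly \eqref{eq: psirec}. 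The proof of \eqref{eq: phirec} is entirely parallel, using $\poch[q^2]{-q}{n} = (1 + q^{2n-1})\poch[q^2]{-q}{n-1}$ so that the split produces factors $q^{2n-1} = q^{2m+1}$; after substituting $q \mapsto q^{1/2}$ throughout and tracking the $q^{n^2}$ exponent via $(m+1)^2 = m^2 + 2m + 1$, the same re-indexing delivers the two scaled copies $\Phi(xq,\cdot)$ and $\Phi(xq^2,\cdot)$ with the stated prefactors.

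I do not expect any genuine obstacle here; both identities are formal consequences of a one-step Pochhammer recursion followed by re-indexing, and the only thing to watch is the precise matching of the $q$-exponents after the shift $m = n-1$. The main care needed is algebraic: verifying that $(m+1)(m+2)/2$ and $(m+1)^2$ decompose so that the leftover powers of $q$ combine with the $x$-substitution to reproduce the generating function at $xq$ and $xq^2$ exactly, with no stray factors. Since the excerpt asks only for a plan, I would present the $\Psi$ computation in full and remark that \eqref{eq: phirec} follows by the identical argument with $\poch{-q}{n}$ replaced by $\poch[q^2]{-q}{n}$ and the substitution $q \mapsto q^{1/2}$.
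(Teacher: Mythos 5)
Your proposal is correct and is essentially the paper's own proof run in the opposite direction: the paper combines the right-hand side $xq^2\,\Psi(xq^2,q)+xq\,\Psi(xq,q)+1$ into a single sum and reindexes with $n'=n+1$, implicitly using the same one-step Pochhammer identity $\poch{-q}{n'}=(1+q^{n'})\,\poch{-q}{n'-1}$ that you use to split the left-hand side before reindexing with $m=n-1$. Like the paper, you carry out the $\Psi$ computation in full and dispose of the $\Phi$ identity as the parallel case.
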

\begin{proof}
We will only prove \eqref{eq: psirec}, since \eqref{eq: phirec} is similar. From \eqref{eq: Psi_def}, it follows that the right hand side of \eqref{eq: psirec} is the following sum:
$$ 1 + \sum_{n=0}^\infty x^{n+1}\,\poch{-q}{n} \, q^{n(n+1)/2} \left( q^{2n+2} + q^{n+1}\right).$$
Reindexing this sum with $n^\prime = n+1$, it is clear that it equals $\Psi(x,q)$.
\end{proof}

\subsection{} We now prove,
\begin{prop} \beq\label{eq: level13_hwt0}
\qm{1}{3}{0}(x,q) = \frac{1}{2}\left( \Phi(x,q^{\frac{1}{2}}) + \Phi(x,-q^{\frac{1}{2}}) \right)
\eeq
\beq\label{eq: level13_hwt2}
\qm{1}{3}{2}(x,q) = \frac{1}{2xq^{\frac{1}{2}}}\left(
  \Phi(x,q^{\frac{1}{2}}) - \Phi(x,-q^{\frac{1}{2}}) \right)
\eeq
\beq\label{eq: level13_hwt1}
\qm{1}{3}{1}(x,q)
 = \Psi(x,q)
\eeq
\end{prop}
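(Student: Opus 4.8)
The plan is to prove the three identities \eqref{eq: level13_hwt0}, \eqref{eq: level13_hwt2}, \eqref{eq: level13_hwt1} by exhibiting, for each of the generating series $\qm{1}{3}{0}(x,q)$, $\qm{1}{3}{1}(x,q)$, $\qm{1}{3}{2}(x,q)$, a functional equation that matches the recurrence for $\Phi$ or $\Psi$ established in Lemma~\ref{prop: phipsirec}, and then matching initial conditions. The starting point is the explicit double-sum expressions \eqref{1to3initial0}, \eqref{1to3initial1}, \eqref{1to3initial2} already derived above. For the $\Psi$ identity \eqref{eq: level13_hwt1} the route should be essentially direct: compare \eqref{1to3initial1} with the definition \eqref{eq: Psi_def} of $\Psi(x,q)$. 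First I would use the $q$-binomial theorem \eqref{eq: qbthm_full} to collapse the inner sum $\sum_{p=0}^n q^{p(p+1)/2}\qbinom{n}{p}$ in \eqref{1to3initial1}, recognizing it as $\poch{-q}{n}$ (up to the standard shift $q^{p(p-1)/2}\mapsto q^{p(p+1)/2}$ obtained by substituting $x\mapsto q$ in \eqref{eq: qbthm_full}). This turns \eqref{1to3initial1} termwise into $\sum_{n\ge0} x^n q^{n(n+1)/2}\poch{-q}{n}$, which is exactly $\Psi(x,q)$, giving \eqref{eq: level13_hwt1} immediately.

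**The even/odd splitting for $\Phi$.** The harder pair is \eqref{eq: level13_hwt0} and \eqref{eq: level13_hwt2}, which I would treat together because they encode the even and odd parts (in the sense of \eqref{eq:pmdef}) of the single series $\Phi(x,q^{1/2})$. Observe that the right-hand sides are precisely $\even{(\Phi(x,\cdot))}$ and $\odd{(\Phi(x,\cdot))}$ evaluated at $q^{1/2}$, so proving both is equivalent to proving the one combined identity
\begin{equation*}
\Phi(x,q^{1/2}) = \qm{1}{3}{0}(x,q^2) + xq\,\qm{1}{3}{2}(x,q^2),
\end{equation*}
after substituting $q\mapsto q^2$ to clear the half-integer powers. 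My approach is to verify this combined series satisfies the recurrence \eqref{eq: phirec}. From the double sums \eqref{1to3initial0} and \eqref{1to3initial2}, I would compute $\qm{1}{3}{0}(x,q)$ and $\qm{1}{3}{2}(x,q)$ and assemble the generating function $G(x,q):=\qm{1}{3}{0}(x,q) + xq^{1/2}\qm{1}{3}{2}(x,q)$, then show directly that $G$ obeys $G(x,q)=xq\,G(xq^2,q)+xq^{1/2}G(xq,q)+1$, which is \eqref{eq: phirec} with $q^{1/2}$ in place of $q$; since $\Phi$ is the unique power series satisfying that recurrence with the correct constant term, this forces $G=\Phi(x,q^{1/2})$, and splitting into even and odd parts yields \eqref{eq: level13_hwt0} and \eqref{eq: level13_hwt2}.

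**Where the recurrences come from.** Rather than attack the double sums directly, a cleaner route I would try first is to derive the functional equation for $\qm{1}{3}{n}(x,q)$ straight from Proposition~\ref{secondrecursion}(ii) with $\ell=2$, $m=3$, specialized to the three lowest weights $n=0,1,2$ — essentially the same mechanism that produced \eqref{eq: brecc} for the level $2\to3$ series, but now at level $1\to3$. The relation \eqref{difflevel} with $\ell=1,\ell'=2,m=3$ expresses $\qm{1}{3}{n}$ through the level $1\to2$ multiplicities \eqref{1to2} and the level $2\to3$ data, so one expects the three-term recurrences governing $\Phi$ and $\Psi$ to be inherited. I would extract the $x$-coefficient recurrences for $[D(1,\cdot):D(3,n)]_q$ at $n=0,1,2$ and translate them into generating-series functional equations, matching them against \eqref{eq: psirec} and \eqref{eq: phirec}.

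**Main obstacle.** The principal difficulty will be the bookkeeping in the even/odd decomposition: the half-integer powers of $q$ in $\Phi(x,q^{1/2})$ and in the definitions \eqref{eq: phi0}–\eqref{eq: psi1} must be tracked carefully so that the parity of the exponent of $q$ in each monomial $x^n q^{?}$ correctly sorts terms between $\qm{1}{3}{0}$ (even part) and $\qm{1}{3}{2}$ (odd part). In \eqref{1to3initial2} the exponent $\tfrac{n^2+2n}{2}$ together with the inner constraint $p\not\equiv n\pmod 2$ must be reconciled with the factor $xq^{1/2}$; getting the shift right — so that the reindexing $p\mapsto n-p$ and the prefactor $(xq^{1/2})^{-1}$ in \eqref{1to3initial2} align with the odd-part extraction \eqref{eq:pmdef} — is the delicate point. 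Once the combined series $G$ is correctly identified and shown to satisfy \eqref{eq: phirec} with the right initial term $G(0,q)=1$, uniqueness of solutions to that recurrence closes the argument cleanly.
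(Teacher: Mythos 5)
Your treatment of \eqref{eq: level13_hwt1} is exactly the paper's: apply the $q$-binomial theorem \eqref{eq: qbthm_full} (specialized at $x\mapsto q$) to the inner sum of \eqref{1to3initial1}, recognize it as $\poch{-q}{n}$, and read off $\Psi(x,q)$. For the pair \eqref{eq: level13_hwt0}, \eqref{eq: level13_hwt2} you take a genuinely different route. The paper writes the $q$-binomial theorem in the form \eqref{eq: qbteq}, evaluates at $z=\pm 1$ and averages, which extracts the parity-restricted inner sums of \eqref{1to3initial0} and \eqref{1to3initial2} in closed form in one stroke. You instead observe, correctly, that the two identities are jointly equivalent to the single statement $G(x,q):=\qm{1}{3}{0}(x,q)+xq^{\frac{1}{2}}\,\qm{1}{3}{2}(x,q)=\Phi(x,q^{\frac{1}{2}})$, because the first summand carries only integer powers of $q$ and the second only half-integer powers; and you propose to prove this by verifying that $G$ satisfies \eqref{eq: phirec} and invoking uniqueness of power-series solutions with constant term $1$. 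The uniqueness argument is valid (the functional equation determines the coefficients of $x^n$ recursively), so your architecture is sound and completable. But note it is a detour: once you assemble $G$ from \eqref{1to3initial0} and the second form of \eqref{1to3initial2}, the two parity-complementary inner sums merge into the unrestricted sum $\sum_{p=0}^n q^{p^2/2}\,\qbinom{n}{p}$, which the $q$-binomial theorem at $z=1$ evaluates to $\poch{-q^{\frac{1}{2}}}{n}$, giving $G=\Phi(x,q^{\frac{1}{2}})$ immediately --- no functional equation or uniqueness is needed. In effect, your combined identity is the paper's computation at $z=1$, and the paper's additional evaluation at $z=-1$ replaces your parity-splitting step; the two proofs have the same mathematical content, with yours packaged less efficiently.

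One genuine caution about your fallback route: deriving the needed functional equations for $\qm{1}{3}{n}(x,q)$ directly from Proposition \ref{secondrecursion}(ii) together with \eqref{difflevel} is unsubstantiated as written. Proposition \ref{secondrecursion} governs only adjacent levels $\ell\to\ell+1$, and the three-term relations for the $1\to 3$ series that you hope are ``inherited'' are precisely the equations appearing in the paper's proof of Theorem \ref{thm: xqarbit} --- where they are derived as \emph{consequences} of the present proposition together with Lemma \ref{prop: phipsirec}. Assuming them at this stage would therefore be circular. If you carry out your plan, it must be via the first route (direct manipulation of the double sums), which, as noted above, collapses to the paper's one-line argument.
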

\begin{proof}
For $\qm{1}{3}{0}(x,q)$, we first use the $q$-binomial theorem (Equation \eqref{eq: qbthm_full}) to obtain
\begin{equation}\label{eq: qbteq}
\sum_{p=0}^n q^{\,p^2/2} \, \qbinom{n}{p}\, z^{n-p} =
z^n\,\poch{-z^{-1}q^{\frac{1}{2}}}{n}.
\end{equation}
We obtain a second equation by replacing $z$ by $-z$ in \eqref{eq: qbteq}. Adding these two equations together and setting $z=1$, we have
$$\sum_{\substack{p=0 \\ p \equiv n \\\!\!\!\!\!\pmod{2}}}^n q^{\,p^2/2} \, \qbinom{n}{p} =
\frac{1}{2}\poch{-q^{\frac{1}{2}}}{n} + \frac{(-1)^n}{2}\,\poch{q^{\frac{1}{2}}}{n}.   $$
Replacing this summation in (\ref{1to3initial0}), we obtain \eqref{eq: level13_hwt0}.

The proof for $\qm{1}{3}{2}(x,q)$ is similar. For $\qm{1}{3}{1}(x,q)$, apply the $q$-binomial theorem to
the inner sum in \eqref{1to3initial1} to obtain \eqref{eq: level13_hwt1}.
\end{proof}

\subsection{} We are now able to make the connection with mock theta functions and prove the first assertions of Theorem \ref{mocktheta}.
\begin{cor}\label{prop: x1q_spec} For the specializations $x=1$ and $x=q$, we have
\begin{align*}
\qm{1}{3}{0}(1,q) &= \even{\phi}_0(q) &
\qm{1}{3}{0}(q,q) &= \odd{\phi}_1(q) \\
\qm{1}{3}{1}(1,q) &= \psi_1(q) &
\qm{1}{3}{1}(q,q) &= \psi_0(q)/q \\
\qm{1}{3}{2}(1,q) &= \odd{\phi}_0(q) &
\qm{1}{3}{2}(q,q) &= \even{\phi}_1(q) / q^2
\end{align*}
\end{cor}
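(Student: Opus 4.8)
The plan is to evaluate the functions $\Phi$ and $\Psi$ defined in equations \eqref{eq: Phi_def} and \eqref{eq: Psi_def} at the specific points needed, and then match these against the definitions \eqref{eq: phi0}--\eqref{eq: psi1} of Ramanujan's fifth order mock theta functions together with the even/odd projection operators $f \mapsto f^\pm$ from \eqref{eq:pmdef}. The key observation is that the previous proposition already expresses each of $\qm{1}{3}{0}(x,q)$, $\qm{1}{3}{1}(x,q)$, $\qm{1}{3}{2}(x,q)$ in closed form in terms of $\Phi$ and $\Psi$, so the corollary is really a bookkeeping exercise in substituting $x=1$ and $x=q$ and comparing series coefficient by coefficient.

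\textbf{Step 1: the $\Psi$-specializations.} First I would handle the two middle identities, since they are the most direct. From \eqref{eq: level13_hwt1} we have $\qm{1}{3}{1}(x,q)=\Psi(x,q)$. Setting $x=1$ in \eqref{eq: Psi_def} gives $\Psi(1,q)=\sum_{n\ge 0} q^{n(n+1)/2}\poch{-q}{n}$, which is exactly $\psi_1(q)$ by \eqref{eq: psi1}. Setting $x=q$ gives $\Psi(q,q)=\sum_{n\ge 0} q^{n}q^{n(n+1)/2}\poch{-q}{n}=\sum_{n\ge 0} q^{(n+1)(n+2)/2 - 1}\poch{-q}{n}$, since $n + n(n+1)/2 = (n+1)(n+2)/2 - 1$; comparing with \eqref{eq: psi0} this is $\psi_0(q)/q$, as claimed.

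\textbf{Step 2: the $\Phi$-specializations via even/odd projection.} For the four $\phi$-identities I would work with $\Phi$. Setting $x=1$ in \eqref{eq: Phi_def} with the half-integer argument gives $\Phi(1,q^{1/2})=\sum_{n\ge 0} q^{n^2/2}\poch[q]{-q^{1/2}}{n}$. The relations \eqref{eq: level13_hwt0} and \eqref{eq: level13_hwt2} are precisely symmetrization and antisymmetrization in $q^{1/2}\mapsto -q^{1/2}$, so the point is to recognize $\Phi(1,\pm q^{1/2})$ as the generating function $\phi_0$ evaluated at $\pm q^{1/2}$. From \eqref{eq: phi0}, $\phi_0(q^{1/2})=\sum_{n\ge 0} (q^{1/2})^{n^2}\poch[q]{-q^{1/2}}{n}$, whose terms have exponents that are half-integers precisely when $n$ is odd; applying the definition \eqref{eq:pmdef} of $f^{\pm}$ then separates $\phi_0$ into its even part $\even{\phi}_0$ and $q^{1/2}$ times its odd part. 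Thus \eqref{eq: level13_hwt0} evaluated at $x=1$ yields $\tfrac12(\phi_0(q^{1/2})+\phi_0(-q^{1/2}))=\even{\phi}_0(q)$, while \eqref{eq: level13_hwt2} at $x=1$ yields $\tfrac{1}{2q^{1/2}}(\phi_0(q^{1/2})-\phi_0(-q^{1/2}))=\odd{\phi}_0(q)$. For the $x=q$ specializations I would similarly compute $\Phi(q,\pm q^{1/2})$, absorbing the extra factor $x^n=q^n$ to convert the exponent $n^2$ into $(n+1)^2$ up to an overall shift, so that $\Phi(q,\pm q^{1/2})$ is recognized in terms of $\phi_1(\pm q^{1/2})$ via \eqref{eq: phi1}, producing $\odd{\phi}_1(q)$ and $\even{\phi}_1(q)/q^2$ after symmetrization.

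\textbf{Main obstacle.} The routine substitutions into $\Psi$ are immediate, so the only genuine care is needed in Step 2: I must correctly track the half-integer powers of $q$ and match them against the even/odd projection formulas \eqref{eq:pmdef}. The subtle point is the exponent arithmetic — verifying that the factor $x^n=q^n$ shifts $n^2 \mapsto (n+1)^2$ in the right way and that the prefactors $\tfrac{1}{2q^{1/2}}$ in \eqref{eq: level13_hwt2} and $1/q^2$ in the $\odd{\phi}_1$ and $\even{\phi}_1$ cases come out with exactly the powers of $q$ stated. Concretely, for $\qm{1}{3}{2}(q,q)$ one checks $n^2 + 2n$ combined with the extra $q^n$ and the $1/(xq^{1/2})$ factor produces $(n+1)^2$ with an overall $q^{-2}$, matching $\even{\phi}_1(q)/q^2$. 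Once these exponent computations are carried out, the identities follow directly by comparing against the definitions, so I do not anticipate any conceptual difficulty beyond this careful parity-and-exponent accounting.
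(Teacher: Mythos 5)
Your proposal is correct and follows essentially the same route as the paper: both proofs substitute $x=1$ and $x=q$ into the closed forms \eqref{eq: level13_hwt0}--\eqref{eq: level13_hwt1}, use the elementary identities $\Psi(1,q)=\psi_1(q)$, $\Psi(q,q)=\psi_0(q)/q$, $\Phi(1,q)=\phi_0(q)$, $\Phi(q^2,q)=\phi_1(q)/q$ (your exponent computations $n+n(n+1)/2=(n+1)(n+2)/2-1$ and $n^2+2n=(n+1)^2-1$ are exactly these), and then invoke the definition \eqref{eq:pmdef} of the even/odd projections. The only cosmetic difference is that the paper records these identities at integer powers of $q$ and then substitutes $q\mapsto \pm q^{1/2}$, whereas you work with $q^{1/2}$ throughout; the content is identical.
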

\begin{proof}
We note from equations \eqref{eq: Phi_def}, \eqref{eq: Psi_def},\eqref{eq: phi0}-\eqref{eq: psi1} that trivial calculations give
\begin{equation}\label{seriesequalities}
 \Psi(1,q) = \psi_1(q),\qquad \Psi(q,q) = \psi_0(q)/q,\qquad
\Phi(1,q) = \phi_0(q), \qquad \Phi(q^2,q) = \phi_1(q)/q.
\end{equation}
Since $\qm{1}{3}{1}(x,q)=\Psi(x,q)$ from \eqref{eq: level13_hwt1}, we easily obtain the equalities
$$
\qm{1}{3}{1}(1,q) = \psi_1(q) \qquad \text{ and }\qquad \qm{1}{3}{1}(q,q) = \psi_0(q)/q.
$$
Now, consider \eqref{eq: level13_hwt0} with the equation for $\Phi(1,q^{1/2})$ from \eqref{seriesequalities} above to obtain
$$
\qm{1}{3}{0}(1,q)=\frac{1}{2}(\phi_0(q^{1/2})+\phi_0(-q^{1/2})).
$$
Thus by \eqref{eq:pmdef}, we obtain $$\qm{1}{3}{0}(1,q)=\even{\phi}_0(q).$$
Similar calculations give $$\qm{1}{3}{0}(q,q)=\odd{\phi}_1(q).$$
For the last two equalities, we use Equations \eqref{eq: level13_hwt2} and \eqref{seriesequalities} and proceed as above.
\end{proof}

\subsection{}
We now consider the specializations $\qm{1}{3}{n}(q^k, q)$ for arbitrary $k \in \integers$ and
$0 \leq n \leq 2$. We show that these are in fact linear combinations of the mock theta functions with coefficients in $\zqi$. More precisely, we have

\begin{thm}\label{thm: xqarbit}
Let $k \in \integers$. Then:
\begin{enumerate}
\item $$\qm{1}{3}{1}(q^k, q) = a_{k,0}(q) \, \psi_0(q) + a_{k,1}(q) \, \psi_1(q) + b_k(q),$$
for some $a_{k,0}, a_{k,1}, b_k \in \zqi$.

\medskip
\item  $$\qm{1}{3}{0}(q^k, q) = c_{k,0}(q) \, \phi^{\pm}_0(q) + c_{k,1}(q) \, \phi^{\pm}_1(q) + d_k(q),$$
for some $c_{k,0}, c_{k,1}, d_k \in \zqi$. The choice of signs ($\pm$) on the right hand side is made
as follows: both signs are ($+$) if $k$ is even, and both are ($-$) if $k$ is odd.

\medskip
\item  $$\qm{1}{3}{2}(q^k, q) = e_{k,0}(q) \, \phi^{\pm}_0(q) + e_{k,1}(q) \, \phi^{\pm}_1(q) + f_k(q),$$
for some $e_{k,0}, e_{k,1}, f_k \in \zqi$. The choice of signs ($\pm$) on the right hand side is now opposite to that above, with both signs ($-$) if $k$ is even, and ($+$) if $k$ is odd.
\end{enumerate}
\end{thm}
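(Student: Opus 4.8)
The plan is to convert each specialization $\qm{1}{3}{n}(q^k,q)$ into a three-term recurrence in the index $k$ and solve it by a two-sided induction, taking the values at $k=0,1$ from Corollary~\ref{prop: x1q_spec} as base cases. The essential observation is that the functional equations of Lemma~\ref{prop: phipsirec}, once specialized at $x=q^k$, become recurrences whose coefficients are integer powers of $q$, hence lie in $\zqi$; thus an induction that expresses level $k$ in terms of neighbouring levels never leaves the $\zqi$-span of the relevant mock theta functions.

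For part (1) I would start from $\qm{1}{3}{1}(x,q)=\Psi(x,q)$ in \eqref{eq: level13_hwt1} and put $x=q^k$ in \eqref{eq: psirec}. Writing $G_k=\qm{1}{3}{1}(q^k,q)$, this yields $G_k=q^{k+2}G_{k+2}+q^{k+1}G_{k+1}+1$. Since $G_0=\Psi(1,q)=\psi_1(q)$ and $G_1=\Psi(q,q)=\psi_0(q)/q$ by \eqref{seriesequalities}, and since the recurrence can be solved for $G_{k+2}$ (forward) or for $G_k$ (backward) with coefficients in $\zqi$, induction on $|k|$ shows every $G_k$ lies in the $\zqi$-span of $\{1,\psi_0,\psi_1\}$. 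This is the model for the harder parts.

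For parts (2) and (3) I would work with $\Phi(q^k,\epsilon q^{1/2})$, $\epsilon\in\{+1,-1\}$. Setting $x=q^k$ in \eqref{eq: phirec} gives $\Phi(q^k,\epsilon q^{1/2})=q^{k+1}\Phi(q^{k+2},\epsilon q^{1/2})+\epsilon q^{k+1/2}\Phi(q^{k+1},\epsilon q^{1/2})+1$. By \eqref{eq: level13_hwt0} and \eqref{eq: level13_hwt2}, the quantities $R_k:=\qm{1}{3}{0}(q^k,q)$ and $q^{k+1/2}S_k$, with $S_k:=\qm{1}{3}{2}(q^k,q)$, are exactly the $\epsilon$-even and $\epsilon$-odd combinations of $\Phi(q^k,\epsilon q^{1/2})$. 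Adding and subtracting the two copies $\epsilon=\pm1$ and then clearing the half-integer powers produces the coupled system $R_k=q^{k+1}R_{k+2}+q^{2k+2}S_{k+1}+1$ and $S_k=q^{k+3}S_{k+2}+R_{k+1}$, both with coefficients in $\zqi$.

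Finally I would run the two-sided induction on this system. The base values $R_0=\phi^+_0$, $R_1=\phi^-_1$, $S_0=\phi^-_0$, $S_1=q^{-2}\phi^+_1$ come from Corollary~\ref{prop: x1q_spec}, and each step determines level $k+2$ from levels $k,k+1$ and, solved the other way, level $k$ from levels $k+1,k+2$, always with $\zqi$-coefficients. The one genuine obstacle is the sign bookkeeping asserted in the statement: one checks inductively that $R_k$ lies in the span of $\{1,\phi^+_0,\phi^+_1\}$ for $k$ even and of $\{1,\phi^-_0,\phi^-_1\}$ for $k$ odd—call this sign $\sigma(k)$—while $S_k$ carries the opposite sign $\tau(k)=-\sigma(k)$. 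The recurrences respect this because $R_{k+2}$ stays in class $\sigma(k+2)=\sigma(k)$ and is fed $S_{k+1}$, for which $\tau(k+1)=\sigma(k)$, while $S_{k+2}$ stays in class $\tau(k+2)=\tau(k)$ and is fed $R_{k+1}$, for which $\sigma(k+1)=\tau(k)$. Keeping the half-integer powers of $q$ paired so that they cancel—automatic once one works with $R_k,S_k$ rather than $q^{k+1/2}S_k$—is the only delicate point of the computation.
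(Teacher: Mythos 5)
Your proposal is correct and follows essentially the same route as the paper: base cases $k=0,1$ from Corollary \ref{prop: x1q_spec}, the specialization $x=q^k$ in the functional equations of Lemma \ref{prop: phipsirec} (via \eqref{eq: level13_hwt0}--\eqref{eq: level13_hwt1}) to get three-term recurrences with $\zqi$-coefficients, and two-sided induction on $k$; your coupled system $R_k=q^{k+1}R_{k+2}+q^{2k+2}S_{k+1}+1$, $S_k=q^{k+3}S_{k+2}+R_{k+1}$ is exactly the paper's pair of equations for $\qm{1}{3}{0}$ and $\qm{1}{3}{2}$ evaluated at $x=q^k$. Your explicit sign bookkeeping ($\sigma$, $\tau$) just spells out what the paper leaves implicit in ``arguing by induction as in (1).''
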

\begin{proof}
All three assertions hold for $k=0,1$ by Proposition \ref{prop: x1q_spec}.
We first prove (1). Let $k \in \integers$;
equations \eqref{eq: level13_hwt1} and \eqref{eq: psirec} imply:
\beq\label{eq: a13_qpow_rec}
1 -\qm{1}{3}{1}(q^k,q) + q^{k+1} \,\qm{1}{3}{1}(q^{k+1}, q) +
q^{k+2} \,\qm{1}{3}{1}(q^{k+2}, q)   =0.
\eeq

\smallskip
Consider $\qm{1}{3}{1}(q^j, q)$ for $j \in \{k, k+1, k+2\}$; equation \eqref{eq: a13_qpow_rec} shows that if the assertion of the theorem holds for any two of these values of $j$, then it also holds for the third. Since, as observed earlier, the assertion is true for $k=0,1$, it holds for all $k \in \integers$ by induction.

To prove (2) and (3), we observe that equations  \eqref{eq: phirec}, \eqref{eq: level13_hwt0} and
  \eqref{eq: level13_hwt2}  imply:
\begin{align}
\qm{1}{3}{0}(x,q) &= xq\,\qm{1}{3}{0}(xq^2,q) +  x^2q^2\,\qm{1}{3}{2}(xq,q) +1. \\
\qm{1}{3}{2}(x,q) &= xq^3\,\qm{1}{3}{2}(xq^2,q) +  \qm{1}{3}{0}(xq,q).
\end{align}
The proof now follows by setting $x=q^k$, and arguing by induction as in (1).
\end{proof}

\subsection{}
Finally, we turn to $\qm{1}{3}{n}(x, q)$ for arbitrary $n \ge 0$.
Let us define
$$ \qmc{n}(x, q) = \qm{1}{3}{n}(x,q)\,\prod_{i =1}^{\floor{\frac{n}{3}}} (1-q^i),$$
with $\qmc{-1}(x,q)=0$. Let $\integers((q))$ denote the ring of Laurent series with integer coefficients.
We then have the following:
\begin{proposition} Let $R \subset \integers((q))$ denote the $\zqi$-span of $\{1, \phi^{\pm}_0, \phi^{\pm}_1, \psi_0, \psi_1\}$. Let $n \geq 0, \,k \in \integers$. Then $\qmc{n}(q^k, q) \in R$.
\end{proposition}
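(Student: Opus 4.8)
The plan is to prove the statement by strong induction on $n$, holding for \emph{all} $k\in\integers$ simultaneously. The base cases $n\in\{0,1,2\}$ are immediate: here $\floor{\frac{n}{3}}=0$, so $\qmc{n}(q^k,q)=\qm{1}{3}{n}(q^k,q)$, and membership in $R$ is exactly the content of Theorem \ref{thm: xqarbit}. For the inductive step I need a recursion that expresses $\qm{1}{3}{n}$, for $n\ge 3$, in terms of $\qm{1}{3}{n'}$ with $n'<n$, and the natural source of such a recursion is Theorem \ref{recursion1}, which is the only recurrence available directly at level $1\to 3$.

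First I would specialize Theorem \ref{recursion1} to $\ell=1$, $m=3$ (so $r(s,1)=0$ and two of the five terms drop out), and convert it into an identity of generating series. Writing $G_n(x):=\qm{1}{3}{n}(x,q)=\sum_{k\ge 0}[D(1,n+2k):D(3,n)]_q\,x^k$ and substituting $s=n-1+2j$ (the unique parity making every term nonzero), each term becomes a coefficient of some $G_{n'}$; multiplying by $x^j$ and summing over $j\ge 0$ turns the coefficient relation into a functional equation. The one nonroutine point is that the factor $(1-q^s)=1-q^{\,n-1}(q^2)^j$ produces, after summation, the dilated series $G_n(q^2x)$. For $n\ge 1$ the substitution gives $s\ge 0$ throughout, and the outcome is a master recursion of the shape
\[
G_n(x)=G_{n-1}(x)+(1-\delta_{r(n+1,3),0})\,x\,G_{n+1}(x)-x\,G_n(x)+q^{\,n+1}x\,G_n(q^2x)+\gamma_n\,x^{\,3-r(n,3)}\,G_{n'}(x),
\]
where $n'=n+5-2r(n,3)$ and $\gamma_n=q^{(\floor{\frac{n}{3}}+1)(2-r(n,3))}\bigl(1-q^{\floor{\frac{n}{3}}+1}\bigr)$.

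Although $n'>n$, the term $G_{n'}$ always carries the highest index appearing, so I would solve for it. Splitting into the three residues $r(n,3)\in\{0,1,2\}$ (with centers $n=N-1,\,N-3,\,N-5$ respectively) and re-indexing yields, for each $N\ge 3$, a relation expressing $G_N$ through $G_{N'}$ and $G_{N'}(q^2x)$ with $N'<N$, divided by a factor $q^ax^b\bigl(1-q^{\floor{\frac{N}{3}}}\bigr)$. Multiplying through by $\poch{q}{\floor{\frac{N}{3}}-1}$ converts every $G$ into the normalized $\qmc{}$ and absorbs the offending $\bigl(1-q^{\floor{\frac{N}{3}}}\bigr)$ into $\poch{q}{\floor{\frac{N}{3}}}$. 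The step requiring care is matching the Pochhammer indices: one checks that $\floor{\frac{N-i}{3}}=\floor{\frac{N}{3}}-1$ for the relevant shifts $i$ in each class, the only exception being the term $\qmc{N-6}$ in the class $N\equiv 2$, which acquires a harmless extra factor $(1-q^{\floor{\frac{N}{3}}-1})\in\zqi$. This produces, for instance when $N\equiv 0\pmod 3$,
\[
x\,\qmc{N}(x,q)=(1+x)\,\qmc{N-1}(x,q)-\qmc{N-2}(x,q)-q^{N}x\,\qmc{N-1}(q^2x,q),
\]
together with two analogous relations carrying leading factors $q^{\floor{\frac{N}{3}}}x^2$ and $q^{2\floor{\frac{N}{3}}}x^3$ for $N\equiv 1$ and $N\equiv 2$, all with coefficients in $\zqi$.

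Finally I would specialize $x=q^k$. The leading monomial $q^ax^b$ becomes the unit $q^{a+bk}\in\zqi$, and the dilation $x\mapsto q^2x$ becomes the shift $k\mapsto k+2$; hence each relation writes $\qmc{N}(q^k,q)$ as a $\zqi$-combination of $\qmc{N'}(q^{k'},q)$ with $N'<N$ and $k'\in\integers$. Since $R$ is a $\zqi$-module and the induction hypothesis covers every $k'\in\integers$ at levels below $N$, it follows that $\qmc{N}(q^k,q)\in R$. The convention $\qmc{-1}=0$ handles $N=3,4$; the single instance $N=5$ has center $n=0$, where the master recursion requires a boundary correction by an additive constant (because $G_0$ carries a constant term not matched by the $s=-1$ row), but this constant lies in $\zqi\cdot\{1\}\subseteq R$ and is therefore harmless. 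I expect the main obstacle to be precisely this passage from Theorem \ref{recursion1} to the renormalized recursion: the bookkeeping of index shifts, the emergence of the dilated series $G_n(q^2x)$, and the Pochhammer-index matching that forces the coefficients into $\zqi$. Once those identities are in hand, the induction itself is routine.
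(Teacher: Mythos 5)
Your proposal is correct and follows essentially the same route as the paper: the paper's own proof consists of exactly the normalized recursion you derive (its three-case form, including the $q^{\lfloor N/3\rfloor-1}\qmc{N-6}$ correction for $N\equiv 2$ and the $\delta_{p,1}$ boundary constant at $N=5$, matches your master-recursion computation term by term), followed by the same specialization $x=q^k$ and induction on $n$ with base cases $0\le n\le 2$ supplied by Theorem \ref{thm: xqarbit}. The only difference is that the paper asserts the recursion as ``easy to check'' while you carry out the derivation; your two slips (the pairing of residues with centers $N-1,N-3,N-5$, and attributing the $\qmc{-1}=0$ convention to $N=3,4$ rather than $N=5$) are immaterial bookkeeping errors that do not affect the argument.
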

\begin{proof}
It is easy to check that the recursion for graded multiplicities obtained in Theorem \ref{recursion1} translates into the following relation for the generating series, valid for all $p \geq 1, r \in \{0,1,2\}$:

\begin{equation*}
q^{pr}x^{r+1}\,\qmc{3p+r}(x,q) = (1+x)\qmc{3p-r-1}(x,q)-\qmc{3p-r-2}(x,q)-xq^{3p-r}\qmc{3p-r-1}(xq^2,q) + \qmcerr{3p+r}(x,q),
\end{equation*}
where
\begin{equation*}
\qmcerr{3p+r}(x,q) =
\begin{cases}
0 & \text{ if } r=0\\
-x\qmc{3p-1}(x,q)  & \text{ if } r=1\\
-x\qmc{3p-2}(x,q) + q^{p-1}\qmc{3p-4}(x,q) - \delta_{p,1}  & \text{ if } r=2.
\end{cases}
\end{equation*}
Set $x=q^k$ for $k \in \integers$, and let $n \geq 3$; it is clear from these equations that $\qmc{n}(q^k,q)$ lies in the $\zqi$-span of $1$ and the $\qmc{m}(q^p,q)$ for $p \in \bz, \, 0 \leq m <n$.
Since by Theorem \ref{thm: xqarbit}, we have that $\qmc{m}(q^p,q) \in R$ for $p \in \bz, \, 0 \leq m \leq 2$, our proposition now follows by induction.
\end{proof}
\vskip 24 pt
\section{ Demazure modules and the proof of Proposition \ref{secondrecursion} }\label{proofofsecondrecursion}

The goal in  the first part of this section is to collect together the relevant definitions and  results that we shall need to prove Theorem \ref{recursion1} and Proposition \ref{secondrecursion}. We begin this section by briefly reminding the reader of  the definition of  a Demazure module occurring in a highest weight integrable irreducible representation of the  affine Lie algebra $\widehat{\lie{sl}_2}$. We are interested only in stable  Demazure modules and we recall several results from \cite{CV} about this family.  We end the section by proving  Proposition \ref{secondrecursion}.

\subsection{}  Recall that $\lie{sl}_2$ is  the complex simple  Lie algebra of two by two matrices of trace zero and that $\{x,h,y\}$ is  the standard basis of $\lie{sl}_2$, with  $[h,x]=2x$, $[h,y]=-2y$ and $[x,y]=h$. The associated affine Lie algebra  $\widehat{\lie{sl}_2}$ with canonical central element $c$ and scaling operator $d$ can be realized as follows:  as vector spaces we have $$\widehat{\lie{sl}_2}=\lie{sl}_2\otimes\bc[t,t^{-1}]\oplus\bc c\oplus\bc d,$$  where $\bc[t,t^{-1}]$ is the Laurent polynomial ring in an indeterminate $t$, and the  commutator is given by $$[a\otimes f, b\otimes g]=[a,b]\otimes fg,\ \  \ \ [d,a\otimes f]= a\otimes tdf/dt,\ \ \  \ [c,\widehat{\lie{sl}_2}]=0=[d,d].$$  The action of $d$ can also be regarded as defining a $\bz$--grading on $\widehat{\lie{sl}_2}$ where we declare the grade of $d$ and $c$ to be zero and the grade of $a\otimes t^r$ to be $r$ for $a\in\lie{sl}_2$.

 Let $\widehat{\lie h}= \bc h\oplus\bc c\oplus\bc d$ be the Cartan subalgebra and define the Borel and the standard  maximal parabolic subalgebras by  $$\widehat{\lie b}=\lie{sl}_2\otimes t\bc[t]\oplus \bc x\oplus\widehat{\lie h},\qquad \widehat{\lie p}=\widehat{\lie b}\oplus \bc y = \lie{sl}_2\otimes \bc[t]\oplus\bc c\oplus \bc d. $$ Notice that $\widehat{\lie b}$ and $\widehat{\lie p}$ are $\bz_+$--graded subalgebras of $\widehat{\lie g}$. We identify $\lie{sl}_2$ with the grade zero subalgebra $\lie{sl}_2\otimes 1$ of $\lie{sl}_2\otimes \bc[t]$. Define $\delta\in{\widehat{\lie h}}^*$ by: $\delta(d)=1, \delta(\lie h\oplus\bc c)=0$.
Let $\widehat W$ be the affine Weyl group associated to $\widehat{\lie g}$ and recall that  it acts on $\widehat{\lie h}$  and $ \widehat{\lie h}^*$ and leaves $c$ and $\delta$  fixed.
\subsection{} \label{weyldemazure} Suppose that  $\Lambda\in \widehat{\lie h}^*$ is dominant integral: i.e., $\Lambda(h),  \Lambda(c-h)\in\bz_+$ and $\Lambda(d)\in\bz$.   Let $V(\Lambda)$ be the irreducible integrable highest weight  $\widehat{\lie g}$--module generated by a highest weight vector  $v_\Lambda$. The action of $\widehat{\lie h}$ on $V(\Lambda)$ is diagonalizable and  the  central element  $c$ acts via the scalar  $\Lambda(c)$ on $V(\Lambda)$. The non--negative integer  $\Lambda(c)$ is called the level of $V(\Lambda)$.  For all $w\in\widehat{W}$ the element $w\Lambda$ is also an  eigenvalue for the action of $\widehat{\lie h}$  on $V(\Lambda)$ with corresponding eigenspace $V(\Lambda)_{w\Lambda}$. The Demazure module associated to $w$ and $\Lambda$  is defined to be $$V_w(\Lambda)= \bu(\widehat{\lie b}) V(\Lambda)_{w\Lambda}.$$ The Demazure modules are finite--dimensional and
 if  $w\Lambda(h)\le 0$, then $V_w(\Lambda)$ is a module for $\widehat{\lie p}$.   {\em From now on, we shall only be interested in such Demazure modules.}  
 Notice  that these  Demazure modules are indexed by the integers $$-s=w\Lambda(h)\le 0,\ \  \ \  \ell=\Lambda(c),\ \  \ \ p=w\Lambda(d),$$ 
The action of $d$ on the Demazure modules defines a $\bz$--grading on them compatible with $\bz_+$--grading on $\lie{sl}_2[t]$.   Moreover, since $w(\Lambda+p\delta)=w\Lambda+p\delta$ and $(\Lambda+p\delta)(\lie h\oplus\bc c)=\Lambda(\lie h\oplus\bc c)$, it follows that   for a fixed $\ell$ and $s$ the modules are just grade shifts.  If  $s=0$ then $D(\ell, 0)$ is the trivial $\lie{sl}_2[t]$--module.

\subsection{}\label{grch} As the discussion in Section \ref{weyldemazure}  shows, the proper setting for our study is the category of finite--dimensional  $\bz$--graded $\lie{sl}_2[t]$--modules. We recall briefly some of the elementary definitions and properties of this category.   A   finite--dimensional $\bz$--graded $\lie {sl}_2[t]$--module  is a $\bz$--graded vector space  space admitting a compatible graded action of $\lie{sl}_2[t]$:
  $$ V=\bigoplus_{k\in\bz} V[k],\qquad (a\otimes t^r)V[k]\subset V[k+r]\ \  a\in \lie{sl}_2,\ \ r\in\bz_+.$$  In particular, $V[r]$ is a module for the subalgebra $\lie{sl}_2$ of $\lie{sl}_2[t]$ and hence  the action of $\lie h$ on $V[r]$ is semisimple,i.e., $$V[r]=\bigoplus_{m\in\bz} V[r]_m,\ \  V[r]_m=\{v\in V[r]: hv =mv\}.$$ The {\em graded character} of $V$  is the Laurent polynomial in two variables $x,q$ given by $$\ch_{\gr} V=\sum_{m,r\in\bz}\dim V[r]_mx^mq^r.$$

 A map of graded $\lie{sl}_2[t]$--modules is a degree zero map of $\lie{sl}_2[t]$--modules.
 If $V_1$ and $V_2$ are graded $\lie {sl}_2[t]$--modules, then the  direct sum  and tensor product  are again graded  $\lie{sl}_2[t]$--modules, with grading, $$(V_1\oplus V_2)[k]= V_1[k]\oplus V_2[k],\qquad (V_1\otimes V_2)[k]=\bigoplus_{s\in\bz} (V_1[s]\otimes V_2[k-s]).$$  The graded character is additive on short exact sequences and multiplicative on tensor products.

Given a $\bz$--graded vector space $V$, we let $\tau_p^* V$ be the graded vector space whose $r$--th graded piece is $V[r+p]$. Clearly,  a graded action of $\lie{sl}_2[t]$ on $V$ also makes  $\tau_p^* V$ into a graded $\lie{sl}_2[t]$--module.
It is now   easy  to prove (see \cite{CG} for instance)  that  an irreducible object of this category must be  of the form $\tau_p^* V(n)$ where $V(n)$ is the unique (up to isomorphism)  irreducible module for $\lie{sl}_2$ of dimension $(n+1)$.  It follows that if $V$ is an arbitrary finite--dimensional graded $\lie{sl}_2[t]$--module, then $\ch_{\gr} V$ can be written uniquely as a non--negative integer linear combination of $q^p\ch_{\gr}\tau_0^* V(n)$, $p\in\bz$, $n\in\bz_+$.

\subsection{} We recall for the reader's convenience, the graded $\lie{sl}_2[t]$  module $\tau^*_r D(\ell,s)$ defined in Section \ref{mainresult}. Let   $\ell, s\in\bz_+$ and write $s=\ell s_1+s_0$ with $s_1\ge -1$ and $s_0\in\mathbb N$ with $s_0\le \ell$. Then $D(\ell,s)$ is generated by an element $v_s$ and defining relations: \begin{gather}\label{locweyl}(x\otimes \bc[t])v_s=0,\ \ (h\otimes f) v_s= sf(0)v_s,\ \   \ \  (y\otimes 1)^{s+1}v_s=0,\\  \label{demrel2} (y\otimes t^{s_1+1}) v_s=0,\ \ \ \ (y\otimes t^{s_1})^{s_0+1} v_s=0,\ \ \ {\rm {if}}\ \  s_0<\ell.\end{gather} Let $\tau^*_r D(\ell,s)$ be the graded $\lie{sl}_2[t]$-- module obtianed defining  the grade of the element $v_s$ to be $r$. 
The following result is a special case of a result established in \cite[Theorem 2, Proposition 6.7 ]{CV} for $s>0$.
\begin{prop}\label{demdefna} Let $\Lambda$ be a  dominant integral weight for $\widehat{\lie h}$ and let $w\in\widehat W$ be such that $$\Lambda(c)=\ell,\ \ w\Lambda(h)=-s,\ \ w\Lambda(d)=r.$$ We have an isomorphism of  graded $\lie{sl}_2[t]$--modules $$V_w(\Lambda)\cong\tau^*_r D(\ell, s).$$\hfill\qedsymbol\end{prop}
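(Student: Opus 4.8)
The plan is to handle the boundary case $s=0$ directly and to obtain the general case $s>0$ from the presentation of stable affine Demazure modules established in \cite{CV}. For $s=0$ the weight $w\Lambda$ is fixed by the finite reflection, so $V_w(\Lambda)=\bc\,v_{w\Lambda}$ is one--dimensional with $h$ acting by $0$; this is precisely the trivial module $\tau_r^*D(\ell,0)$, with $r=w\Lambda(d)$ recording its grade. So assume from now on that $s>0$.

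First I would locate the cyclic generator. The vector $v_{w\Lambda}$ spans the extremal weight space $V(\Lambda)_{w\Lambda}$, and since $w\Lambda(h)=-s\le 0$ it is a lowest weight vector for the grade--zero subalgebra $\lie{sl}_2$; hence the $(s+1)$--dimensional irreducible $\lie{sl}_2$--submodule it generates has highest weight vector $v_s:=x^s v_{w\Lambda}$, of $h$--weight $s$. Because $x\otimes 1$ has $\delta$--degree $0$, the $d$--eigenvalue of $v_s$ equals $w\Lambda(d)=r$, which is exactly the grade shift in $\tau_r^*D(\ell,s)$. Since $\widehat{\lie p}=\lie{sl}_2\otimes\bc[t]\oplus\bc c\oplus\bc d$ contains both $\lie{sl}_2$ and $\widehat{\lie b}$, and $v_{w\Lambda}\in\bu(\lie{sl}_2)v_s$, the vector $v_s$ generates $V_w(\Lambda)$ as a $\lie{sl}_2[t]$--module.

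Next I would verify that $v_s$ satisfies the defining relations \eqref{locweyl}--\eqref{demrel2}, which yields a surjection $\tau_r^*D(\ell,s)\twoheadrightarrow V_w(\Lambda)$. The relations in \eqref{locweyl} are the usual consequences of integrability and the highest--weight property: $(x\otimes 1)v_s=0$ and $(y\otimes 1)^{s+1}v_s=0$ hold within the $(s+1)$--dimensional $\lie{sl}_2$--string, $(h\otimes f)v_s=sf(0)v_s$ records the weight together with the vanishing of $h\otimes t^i$ $(i\ge 1)$ on an extremal vector, and $(x\otimes t^i)v_s=0$ $(i\ge 1)$ follows from the standard description of $V_w(\Lambda)$ in which a positive affine root vector $e_\beta$ annihilates the extremal vector precisely when $w^{-1}\beta>0$. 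The genuinely Demazure--specific relations are \eqref{demrel2}: writing $s=\ell s_1+s_0$, these encode the level--$\ell$ integrability constraints along the affine $\lie{sl}_2$--strings through $v_s$ generated by $y\otimes t^{s_1}$ and $y\otimes t^{s_1+1}$, and I would check them using Garland's commutation identities in $\bu(\lie{sl}_2[t])$ together with the integrability of $V(\Lambda)$ at level $\ell$.

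Finally, for the reverse inequality I would bound the graded character of the abstractly presented module $D(\ell,s)$ from above and match it with $\ch_{\gr}V_w(\Lambda)$, which is computed by the affine Demazure character formula. This is the crux: using \eqref{demrel2} together with Garland's identities one produces an explicit spanning set of $D(\ell,s)$ by ordered monomials in the $y\otimes t^i$ with bounded exponents, giving $\ch_{\gr}D(\ell,s)\le\ch_{\gr}V_w(\Lambda)$; combined with the surjection this forces equality, hence an isomorphism. I expect this last spanning/upper--bound step to be the main obstacle, as it requires the full combinatorial analysis of the Demazure relations --- and this is exactly what is carried out in \cite[Theorem 2, Proposition 6.7]{CV}, from which the proposition follows as the stated special case.
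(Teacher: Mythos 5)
Your proposal is correct and takes essentially the same route as the paper: the paper gives no independent proof of this proposition, simply observing that it is a special case of \cite[Theorem 2, Proposition 6.7]{CV} for $s>0$ (with $s=0$ being the trivial module), and your argument likewise defers the crux --- the spanning/character upper bound showing the surjection $\tau_r^*D(\ell,s)\twoheadrightarrow V_w(\Lambda)$ is an isomorphism --- to exactly that result of \cite{CV}. The intermediate steps you sketch (extremal vector $v_{w\Lambda}$ at the bottom of its $\lie{sl}_2$-string, the generator $v_s$, verification of the relations \eqref{locweyl}--\eqref{demrel2}) are a fair outline of what \cite{CV} carries out, so the two proofs coincide in substance.
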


\begin{rem} A few remarks are in order here. In the case when $s_0=\ell$  the second relation in equation \eqref{demrel2} is a consequence of the other relations.  A presentation of all Demazure modules was given in  \cite{Joseph}, \cite{ Ma} in the case of simple and Kac--Moody algebras respectively and includes infinitely many relations of the form $(y\otimes t^a)V_w(\Lambda)=0$.  However, it was shown in \cite[Theorem 2] {CV} that in the case of the $\lie{sl}_2$--stable Demazure modules these relations are all consequences of the oes  stated in the proposition.

\end{rem}

\subsection{} We isolate further results from \cite[Section 6]{CV} that will be needed for our study.  \begin{prop}\label{demdefn} Let   $\ell, s\in\bz_+$ and write $s=\ell s_1+s_0$ with $s_1\ge -1$ and $s_0\in\mathbb N$ with $s_0\le \ell$. \begin{enumerit}

\item[(i)]  For  $0\le s\le \ell$ we have $$D(\ell, s)\cong \tau^*_0 V(s), \ \ {\rm{ i.e.}}\ \ ,\ \ \left(\lie{sl}_2\otimes t\bc[t]\right) D(m,s)=0.$$
\item[(ii)] For $s>0$, we have  $\dim D(\ell,s)=(\ell+1)^{s_1}(s_0+1).$
\item[(iii)] The $\lie{sl}_2[t]$--submodule of $D(\ell,s)$ generated by the element $(y\otimes t^{s_1})^{s_0}v_s$ is isomorphic to $\tau_{s_1s_0}^*D(\ell, s-2s_0)$.  In particular, the quotient  $D(\ell,s)/\tau_{s_1s_0}^*D(\ell, s-2s_0)$ is generated by an element $\bar v_s$ with defining relations, \eqref{locweyl} and, \begin{equation}\label{demrel3} (y\otimes t^{s_1+1})\bar  v_s=0,\ \ \ \ (y\otimes t^{s_1})^{s_0} \bar v_s=0.\end{equation}\hfill\qedsymbol

\end{enumerit}\end{prop}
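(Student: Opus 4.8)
The plan is to handle the three parts in increasing order of difficulty, proving (i) by an elementary propagation argument, (iii) by exhibiting the generator of the claimed submodule and checking the Demazure relations, and (ii) by a dimension count that in effect runs in tandem with (iii).

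For (i), write $s=\ell\cdot 0+s$, so that $s_1=0$, $s_0=s$ and the relations \eqref{demrel2} specialize to $(y\otimes t)v_s=0$, while \eqref{locweyl} already gives $(x\otimes\bc[t])v_s=0$ and $(h\otimes t^k)v_s=0$ for $k\ge 1$. I would then show $(y\otimes t^k)v_s=0$ for all $k\ge 1$ by induction on $k$, using $[y\otimes t^{k-1},\,h\otimes t]=2\,y\otimes t^k$ together with $(h\otimes t)v_s=0$ and the inductive hypothesis; the base case $k=1$ is the relation above. Hence $(\lie{sl}_2\otimes t\bc[t])v_s=0$. Since $\lie{sl}_2\otimes t\bc[t]$ is an ideal of $\lie{sl}_2[t]$, the annihilated subspace $\{v:(\lie{sl}_2\otimes t\bc[t])v=0\}$ is a submodule, because for $n$ in the ideal and $a\in\lie{sl}_2[t]$ one has $n(av)=a(nv)+[n,a]v$ with $[n,a]$ again in the ideal. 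This submodule contains the cyclic generator $v_s$ and hence is all of $D(\ell,s)$, so $\lie{sl}_2\otimes t\bc[t]$ acts by zero and $D(\ell,s)=U(\lie{sl}_2)v_s\cong\tau_0^*V(s)$.

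For (iii), set $w=(y\otimes t^{s_1})^{s_0}v_s$; it has $\lie{sl}_2$-weight $s-2s_0$ and graded degree $s_1 s_0$, which accounts for the shift $\tau^*_{s_1s_0}$. Writing $s-2s_0=\ell(s_1-1)+(\ell-s_0)$, I would verify that $w$ satisfies precisely the defining relations of $D(\ell,s-2s_0)$: the $x$-, $h$- and $(y\otimes 1)^{s-2s_0+1}$-relations are routine weight and $\lie{sl}_2$-representation computations, and $(y\otimes t^{s_1})w=(y\otimes t^{s_1})^{s_0+1}v_s=0$ is immediate from \eqref{demrel2}. The genuinely delicate relation is $(y\otimes t^{s_1-1})^{\ell-s_0+1}w=0$, a ``higher'' annihilation that is not among the listed generators and must be deduced from them; here I would invoke the mechanism of \cite{CV} (the Theorem~2 cited above) by which every relation of the form $(y\otimes t^a)^b v_s=0$ is a consequence of the presentation, and extract the specific consequence needed. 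These verifications produce a surjection $\tau^*_{s_1s_0}D(\ell,s-2s_0)\twoheadrightarrow\langle w\rangle$, while the presentation of the quotient $D(\ell,s)/\langle w\rangle$ by the relations \eqref{locweyl} and \eqref{demrel3} is immediate from the definition of the submodule.

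For (ii), the missing ingredient is a dimension that simultaneously forces the surjection in (iii) to be an isomorphism. I would compute $\dim D(\ell,s)$ independently of the presentation: using Proposition~\ref{demdefna} to identify $D(\ell,s)$ with the genuine Demazure module $V_w(\Lambda)$ and applying the Demazure character formula for $\widehat{\lie{sl}_2}$, or, equivalently, realizing $D(\ell,s)$ as the Feigin--Loktev fusion product of $s_1$ copies of $V(\ell)$ with one copy of $V(s_0)$, whose total dimension is $(\ell+1)^{s_1}(s_0+1)$. Granting this, one argues by induction on $s_1$ with base case (i): the exact sequence $0\to\langle w\rangle\to D(\ell,s)\to D(\ell,s)/\langle w\rangle\to 0$, combined with $\dim D(\ell,s-2s_0)=(\ell+1)^{s_1-1}(\ell-s_0+1)$ and an upper bound on the quotient from an explicit PBW-type spanning set, pins down $\dim\langle w\rangle=\dim D(\ell,s-2s_0)$ and upgrades the surjection of (iii) to an isomorphism. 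The main obstacle is exactly these two non-elementary inputs, namely the higher relation $(y\otimes t^{s_1-1})^{\ell-s_0+1}w=0$ in (iii) and the independent computation of $(\ell+1)^{s_1}(s_0+1)$ in (ii); both lie outside the elementary bracket manipulations used for (i) and require either the fusion-product independence-of-parameters theorem or a Demazure-operator character computation.
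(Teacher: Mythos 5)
You should first be aware of what the paper actually does here: it does not prove this proposition at all. The statement is quoted, qed symbol and all, from \cite[Section 6]{CV}, so your attempt has to be judged as a reconstruction of the cited argument. On that score, your part (i) is complete and correct: the commutator $[y\otimes t^{k-1},\,h\otimes t]=2\,y\otimes t^{k}$ propagates the single relation $(y\otimes t)v_s=0$ (for $s=0$ one starts instead from $(y\otimes 1)v_0=0$), and the ideal argument then kills $\lie{sl}_2\otimes t\bc[t]$ on all of $D(\ell,s)$. Your isolation of $(y\otimes t^{s_1-1})^{\ell-s_0+1}w=0$, with $w=(y\otimes t^{s_1})^{s_0}v_s$, as the one genuinely hard relation in (iii) is also exactly right.

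Two of your steps, however, are gestures rather than proofs, and one of them conceals the crux. In (iii), the needed relation is \emph{not} of the form $(y\otimes t^{a})^{b}v_s=0$, so it is not among the Joseph--Mathieu relations that \cite[Theorem 2]{CV} certifies as consequences of the presentation; it must instead be dug out of the sum-relations $\boy(r,p)v_s=0$ of Proposition \ref{altpres}. This can in fact be done: take $r=\ell+1$ and $p=(s_1-1)(\ell+1-s_0)+s_1s_0$ (the hypothesis of Proposition \ref{altpres} holds with $k=0$ because $s_1\ge 1$); every monomial with $b_j>0$ for some $j>s_1$, or with $b_{s_1}>s_0$, kills $v_s$ by \eqref{demrel2} and its consequences, and any remaining monomial satisfies $\sum_j jb_j\le (s_1-1)(\ell+1)+b_{s_1}\le p$ with equality only for the target monomial, so $\boy(\ell+1,p)v_s=0$ collapses to the desired relation. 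But this monomial-isolation argument---the same kind the paper carries out in Section \ref{proofofrecursion1} for a sibling relation---is work you must actually do, not ``extract.'' More seriously, part (ii) is not closed as stated: knowing $\dim D(\ell,s)$ and $\dim D(\ell,s-2s_0)$ together with the exact sequence only gives $\dim\langle w\rangle\le\dim D(\ell,s-2s_0)$; injectivity of the surjection in (iii) is equivalent to an upper bound on $\dim\bigl(D(\ell,s)/\langle w\rangle\bigr)$, and the promised ``PBW-type spanning set'' for the quotient is never produced. It is not elementary: the quotient is not a Demazure module but a fusion-product-type module (a $V(\xi)$ with $\xi=(\ell+1,\ell^{s_1-1},s_0-1)$ in the notation of \cite{CV}), and bounding its dimension is precisely the inductive engine of \cite{CV}. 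A cleaner repair, and in effect what \cite[Section 6]{CV} does, is to use Proposition \ref{demdefna}: inside $V(\Lambda)$ the vector $w$ is, up to a nonzero scalar, the extremal weight vector generating the next stable Demazure module, so $\langle w\rangle\cong\tau^{*}_{s_1s_0}D(\ell,s-2s_0)$ with injectivity for free, and (ii) then follows from the Demazure character formula. If instead you simply invoke the fusion-product realization $D(\ell,s)\cong V(\ell)^{*s_1}*V(s_0)$, you are citing the main theorem of \cite{CV}---that is, doing exactly what the paper itself does.
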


\subsection{}\label{chdembasis} The following is a straightforward application of the Poincare--Birkhoff--Witt theorem.
\begin{lem} Let $\ell\in\mathbb N$ and $s\in\bz_+$. The module $\tau_0^*V(s)$ is the unique irreducible quotient of $D(\ell,s)$ and occurs with multiplicity one in the Jordan--Holder series of $D(\ell,s)$.  Moreover,  if $\tau_p^* V(m)$, $m\ne s$  is a Jordan--Holder constituent  of $D(\ell,s)$ then $p\in\mathbb N$ and $s-m\in2\mathbb N$.\hfill\qedsymbol
\end{lem}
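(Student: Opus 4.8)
The plan is to read off the complete module structure of $D(\ell,s)$ from a Poincar\'e--Birkhoff--Witt (PBW) spanning set and then exploit the exactness of the operation of extracting a fixed graded piece. First I would observe that $v_s$ is a genuine $\lie{sl}_2[t]$--highest weight generator: the relations \eqref{locweyl} give $(x\otimes\bc[t])v_s=0$, $(h\otimes 1)v_s=sv_s$ and $(h\otimes t^r)v_s=0$ for $r>0$. Decomposing $\lie{sl}_2[t]=(y\otimes\bc[t])\oplus(h\otimes\bc[t])\oplus(x\otimes\bc[t])$ and applying PBW, the factors $\bu(x\otimes\bc[t])$ and $\bu(h\otimes t\bc[t])$ act on $v_s$ by scalars, so $D(\ell,s)=\bu(y\otimes\bc[t])v_s$. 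Hence the monomials $(y\otimes t^{r_1})\cdots(y\otimes t^{r_k})v_s$ with $k\ge 0$ and $r_i\ge 0$ span $D(\ell,s)$, and each has $h$--weight $s-2k$ and grade $r_1+\cdots+r_k\ge 0$. It follows that every weight $\mu$ of $D(\ell,s)$ satisfies $\mu\le s$ and $\mu\equiv s\pmod 2$, that every graded piece lies in non--negative grade, that the weight--$s$ space equals $\bc v_s$ and sits in grade $0$, and that the grade--$0$ piece $D(\ell,s)[0]=\bu(y\otimes 1)v_s$ is, by the relation $(y\otimes 1)^{s+1}v_s=0$, the simple $\lie{sl}_2$--module $V(s)$.

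Next I would settle the claim about the irreducible quotient. Since $D(\ell,s)$ is cyclic on $v_s$, any nonzero simple quotient $Q$ is generated by the image $\bar v_s\ne 0$, which is annihilated by $x\otimes\bc[t]$ and has weight $s$ and grade $0$. Writing $Q\cong\tau_p^*V(m)$, the space of vectors killed by $x\otimes\bc[t]$ in such a module is one--dimensional, spanned by the highest weight vector (of weight $m$ and grade $p$, since $\lie{sl}_2\otimes t\bc[t]$ acts trivially on $V(m)$); hence $\bar v_s$ is that vector, forcing $m=s$ and $p=0$. This identifies $\tau_0^*V(s)$ as the unique irreducible quotient.

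For the Jordan--H\"older statement I would use that the functor $M\mapsto M[0]$ from graded $\lie{sl}_2[t]$--modules to $\lie{sl}_2$--modules is exact, short exact sequences of graded modules being exact in each degree, together with the fact that $\tau_p^*V(m)$ is concentrated in the single grade $p$. Applying this functor to a Jordan--H\"older series of $D(\ell,s)$ yields, in the Grothendieck group of $\lie{sl}_2$--modules, $[V(s)]=[D(\ell,s)[0]]=\sum_{i:\,p_i=0}[V(m_i)]$, the sum running over the constituents $\tau_{p_i}^*V(m_i)$ that lie in grade $0$. As $V(s)$ is simple and the $V(m)$ are precisely the simple $\lie{sl}_2$--modules, uniqueness of decomposition forces exactly one grade--$0$ constituent, namely $\tau_0^*V(s)$ with multiplicity one, and every remaining constituent satisfies $p_i\ge 1$, i.e.\ $p_i\in\mathbb N$. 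For any such constituent $\tau_p^*V(m)$ with $m\ne s$, the weight bound and parity established in the first paragraph give $m\le s$ and $m\equiv s\pmod 2$, so $s-m\in 2\mathbb N$, completing the proof.

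The main obstacle is the multiplicity count together with the positivity $p\in\mathbb N$: these do not follow from the PBW spanning set by itself but require the grade--$0$ reduction, namely the identification of $D(\ell,s)[0]$ with the simple $\lie{sl}_2$--module $V(s)$ and the use of the exactness of the grade--$0$ functor to transfer this simplicity into a statement about the Jordan--H\"older constituents. Everything else is routine PBW weight and grade bookkeeping.
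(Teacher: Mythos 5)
Your proof is correct, and it is precisely the argument the paper has in mind: the paper offers no written proof beyond the remark that the lemma is ``a straightforward application of the Poincar\'e--Birkhoff--Witt theorem,'' and your PBW spanning set, weight/grade bookkeeping, and grade-zero reduction are exactly that straightforward application, carried out in full. The only point worth noting is that the paper's two descriptions of the shift $\tau_p^*$ differ by a sign; you have (sensibly) used the convention under which $\tau_p^*V(m)$ sits in grade $p$, which is the one consistent with the lemma's assertion $p\in\mathbb N$.
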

Let $\ell\in\mathbb N$. It follows from the Lemma that if $V$ is a graded finite--dimensional module for $\lie{sl}_2[t]$, then $\ch_{\gr} V$ can be written uniquely as a  $\bz[q,q^{-1}]$ linear combination of  $\ch_{\gr} D(\ell, s)$, $s\in\bz_+$.

 \subsection{} Let  $V$ be  a finite--dimensional  graded $\lie{sl}_2[t]$--module.
  We say that a decreasing sequence  \\$$ \cal F(V)= \{V=V_0 \supsetneq V_1 \supsetneq \cdots V_k \supsetneq V_{k+1}=0\}$$ of graded $\mathfrak{sl}_2[t]$-submodules of $V$ is  a Demazure flag of level $m$, if \\$$V_i /V_{i+1} \cong \tau_{p_i}^*D(m, n_i),\ \    (n_i, p_i)\in \bz_+\times\bz,\ \ 0 \leq i \leq k.$$  Given a flag $\cal F(V)$ we say that the multiplicity of $\tau_p^* D(m,n)$ in $\cal F(V)$  is the cardinality of the set $\{j: V_j/V_{j+1}\cong \tau_p^* D(m,n)\}$. It is not hard to show that the cardinality of this set is independent of the choice of the Demazure  flag  (see for instance \cite[Lemma 2.1]{CSSW}) of $V$ and we denote this number by $[V:\tau ^*_p D(m,n)]$. Define
 $$[V:D(m, n)]_q=\sum_{p \in\bz}[V:\tau_p^* D(m, n)]q^p,\ \ n\ge 0,\ \ \  [V:D(m,n)]_q=0,\ \ n<0.$$  It follows from the discussion in Section \ref{grch} and Section \ref{chdembasis} that if $V$ admits a Demazure flag of level $m$, then \begin{equation}\label{chgreq}\ch_{\gr} V=\sum_{s\in\bz}[V: D(m,s)]_q\ch_{\gr} D(m,s).\end{equation}
The following result was first proved  in \cite{Naoi} for Demazure modules for arbitrary simply--laced simple algebras using the  theory of canonical basis.
An alternate more constructive  and self conatined proof was given in \cite{CSSW} for $\lie{sl}_2[t]$.
\begin{prop}\label{flagexistence}  Let $\ell$ be a positive integer. For all non--negative integers $s$ and $m$ with $m\geq\ell$, the module $D(\ell, s)$ has a Demazure flag of level $m$. \hfill\qedsymbol\end{prop}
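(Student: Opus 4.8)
The plan is to prove the statement in two stages: first reduce to the case $m=\ell+1$, and then settle this core case by induction on the highest weight $s$, using the short exact sequence furnished by Proposition~\ref{demdefn}(iii). For the reduction I argue by induction on $m-\ell\ge 0$. When $m=\ell$ there is nothing to prove, since $D(\ell,s)$ is itself a level $\ell$ Demazure module and the one-step flag $D(\ell,s)\supsetneq 0$ suffices. Assume $m>\ell$ and that every level $\ell'$ Demazure module admits a level $m$ flag whenever $\ell<\ell'\le m$. Granting the core case, $D(\ell,s)$ has a level $\ell+1$ flag $D(\ell,s)=V_0\supsetneq V_1\supsetneq\cdots\supsetneq 0$ with $V_i/V_{i+1}\cong\tau^*_{p_i}D(\ell+1,n_i)$. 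Each $\tau^*_{p_i}D(\ell+1,n_i)$ is a grade shift of a level $\ell+1$ module, hence admits a level $m$ flag by the induction hypothesis; pulling these flags back through the surjections $V_i\to V_i/V_{i+1}$ and interleaving them refines $\{V_i\}$ into a level $m$ flag of $D(\ell,s)$. Thus it suffices to treat $m=\ell+1$.

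For the core case I induct on $s$. If $0\le s\le\ell$, then by Proposition~\ref{demdefn}(i) we have $D(\ell,s)\cong\tau_0^*V(s)$; but the same statement applied at level $\ell+1$ gives $D(\ell+1,s)\cong\tau_0^*V(s)$, so $D(\ell,s)\cong D(\ell+1,s)$ is already a level $\ell+1$ module and its trivial flag works. For $s>\ell$, write $s=\ell s_1+s_0$ with $s_1\ge 1$ and $1\le s_0\le\ell$ (possible since $s>\ell$), and invoke Proposition~\ref{demdefn}(iii): the submodule generated by $(y\otimes t^{s_1})^{s_0}v_s$ is isomorphic to $\tau^*_{s_1s_0}D(\ell,s-2s_0)$, yielding a short exact sequence
\[
0\longrightarrow \tau^*_{s_1s_0}D(\ell,s-2s_0)\longrightarrow D(\ell,s)\longrightarrow Q\longrightarrow 0,
\]
where $Q$ is the cyclic module on a generator $\bar v_s$ subject to \eqref{locweyl} and \eqref{demrel3}. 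Since $s-2s_0<s$, the submodule is a grade shift of a lower-weight level $\ell$ Demazure module, which by the inductive hypothesis (and the fact that $\tau^*$ carries flags to flags) admits a level $\ell+1$ flag. A level $\ell+1$ flag of $D(\ell,s)$ is then obtained by splicing such a flag of the submodule below any level $\ell+1$ flag of $Q$.

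The crux, and the main obstacle, is producing a level $\ell+1$ flag of the quotient $Q$. The difficulty is that $Q$ is \emph{not} a Demazure module: it is generated in $\lie{sl}_2$-weight $s$, yet its relations $(y\otimes t^{s_1+1})\bar v_s=0$ and $(y\otimes t^{s_1})^{s_0}\bar v_s=0$ are those one would attach to the smaller weight $s-1$, so $Q$ falls outside the family $\{D(\ell,\cdot)\}$ and the induction cannot be closed within it. The resolution is to enlarge the class of modules under consideration to the graded cyclic $\lie{sl}_2[t]$-modules defined by relations of the shape $(x\otimes\bc[t])v=0$, $(y\otimes 1)^{N+1}v=0$, $(y\otimes t^{a+1})v=0$, $(y\otimes t^{a})^{b}v=0$, and to prove the existence of a level $\ell+1$ flag for every member of this larger family by a simultaneous induction on dimension. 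The base cases of the family are precisely the small modules $\tau_0^*V(n)$ with $n\le\ell+1$, which are already level $\ell+1$ Demazure modules. Each non-minimal member again carries, via the analogue of the $(y\otimes t^{a})^{b}$-submodule construction, a short exact sequence whose kernel is a grade-shifted Demazure module (to which the first induction applies) and whose cokernel lies in the same family but has strictly smaller dimension; the dimension formula of Proposition~\ref{demdefn}(ii) guarantees that this secondary induction terminates.

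The delicate part of the argument is the bookkeeping of weights and grade shifts inside this enlarged family: one must verify at each stage that the distinguished submodule is genuinely a level-$\ell$ Demazure module and that the cokernel remains of the admissible type, so that both inductions feed each other consistently. Once this is in place, the construction manufactures honest submodules rather than a mere numerical decomposition, and Section~\ref{chdembasis} together with the character identity \eqref{chgreq} confirms that the flag so produced realizes the expected decomposition of $\ch_{\gr}D(\ell,s)$ into level $\ell+1$ Demazure characters, completing the proof.
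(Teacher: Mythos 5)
First, a point of context: the paper does not actually prove this proposition --- it is quoted, with the attribution that the result is due to \cite{Naoi} (via canonical bases) and that a constructive, self-contained proof for $\lie{sl}_2[t]$ appears in \cite{CSSW}. So your attempt has to be measured against the \cite{CSSW} strategy, which is indeed the one your outline gestures at. Your first two stages are sound: the reduction to $m=\ell+1$ by refining flags (induction on $m-\ell$), the base case $0\le s\le \ell$ via Proposition~\ref{demdefn}(i), and the splicing of a level $(\ell+1)$ flag of $\tau^*_{s_1s_0}D(\ell,s-2s_0)$ below one of the quotient $Q$ are all correct and standard.

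The genuine gap is in your third stage, and it is exactly where the mathematical content of the theorem lies. You enlarge the class to the two-parameter family of cyclic modules $M(N,a,b)$ with relations $(x\otimes \bc[t])v=0$, $(y\otimes 1)^{N+1}v=0$, $(y\otimes t^{a+1})v=0$, $(y\otimes t^{a})^{b}v=0$, and then \emph{assert} that each such module carries ``the analogue of the $(y\otimes t^{a})^{b}$-submodule construction,'' i.e.\ a short exact sequence with a grade-shifted Demazure kernel and a cokernel in the same family. This is never proved, and it fails as stated. Even in the Demazure case, the identification of the submodule generated by $(y\otimes t^{s_1})^{s_0}v_s$ with $\tau^*_{s_1s_0}D(\ell,s-2s_0)$ (Proposition~\ref{demdefn}(iii)) is a nontrivial theorem of \cite{CV}, so its ``analogue'' cannot simply be invoked. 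Concretely, run your construction on the very first quotient $Q\cong M(s,s_1,s_0)$, $s=\ell s_1+s_0$: the submodule generated by $(y\otimes t^{s_1})^{s_0-1}\bar v_s$ has highest weight $s-2(s_0-1)=\ell s_1-s_0+2$, and for it to be a Demazure module following the pattern of Proposition~\ref{demdefn}(iii) its level would have to be $\bigl(s-(s_0-1)\bigr)/s_1=\ell+1/s_1$, which is not even an integer once $s_1>1$; no alternative identification is offered. (Likewise, Proposition~\ref{demdefn}(ii) gives dimensions of Demazure modules only, so it says nothing about your enlarged family.) In \cite{CSSW}, building on \cite{CV}, this is resolved by working with the full partition-indexed family $V(\xi)$ --- of which $D(\ell,s)$ is the quasi-rectangular case $\xi=(\ell^{s_1},s_0)$ --- together with ``box-moving'' short exact sequences $0\to\tau^*V(\xi^-)\to V(\xi)\to V(\xi^+)\to 0$ whose kernels and cokernels are again modules $V(\mu)$, in general not Demazure modules; your two-parameter family is too small to be closed under the exact sequences the induction needs. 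So the skeleton of your argument is the right one, but the inductive engine it relies on is missing, and supplying it amounts to redoing the work of \cite{CV} and \cite{CSSW}.
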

This  proposition along with Lemma \ref{chdembasis} proves that  the initial condition given in  \eqref{initial2} are satisfied.
\subsection{}  Theorem 3.3 of \cite{CSSW} shows  that  there is a very large class of modules admitting a Demazure flag of level $m$.   We do not state that result in full generality since it requires introducing a lot of notation which is not needed in this paper. In the special case we need, Theorem 3.3 and Lemma 3.8 of \cite{CSSW} give the first and second parts of the next proposition.

\begin{prop}\label{cssw} Let $\ell\in\mathbb N$ and $s=\ell s_1+s_0$ with $s_1, s_0\in\bz_+$ and $0<s_0\le \ell$.
\begin{enumerit}
\item[(i)] Consider the embedding $\tau_{s_1s_0}^* D(\ell, s-2s_0)\hookrightarrow D(\ell,s)$. The  corresponding quotient
 admits  a Demazure flag of level $m$ for all $m>\ell$.
\item[(ii)]   We have $$[D(\ell, s)/\tau_{s_1s_0}^*D(\ell,s-2s_0): D(\ell+1, n)]_q= q^{(s-n)/2}[D(\ell, s-\ell-1)): D(\ell+1, n-\ell-1)]_q.$$\end{enumerit}
\hfill\qedsymbol
\end{prop}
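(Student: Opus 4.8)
The plan is to work throughout with the presentation of the quotient module $Q := D(\ell,s)/\tau_{s_1 s_0}^* D(\ell, s-2s_0)$ supplied by \propref{demdefn}(iii): namely, $Q$ is cyclic on a generator $\bar v_s$ subject to the relations \eqref{locweyl} together with $(y\otimes t^{s_1+1})\bar v_s=0$ and $(y\otimes t^{s_1})^{s_0}\bar v_s=0$ from \eqref{demrel3} (I take $s_1\ge 1$, the case $s_1=0$ being degenerate since then $Q=0$). For part (i), I would note that this presentation exhibits $Q$ as a member of the family of cyclic modules governed by a highest-weight relation together with one truncation relation $(y\otimes t^{s_1+1})\bar v_s=0$ and one power relation $(y\otimes t^{s_1})^{s_0}\bar v_s=0$; this is precisely the class for which level $m$ Demazure flags are constructed in \cite[Theorem 3.3]{CSSW}. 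Reconstructing that argument, I would induct on $\dim Q$, producing at each stage a short exact sequence $0\to W\to Q\to \bar Q\to 0$ in which $\bar Q$ carries a level $m$ Demazure module as its head and $W$, generated by an appropriate element $(y\otimes t^{a})^{b}\bar v_s$, is a grade-shifted module of the same family of strictly smaller dimension; \propref{demdefn}(iii) (and its analogue at level $m$) is what identifies these submodules, and iterating yields the flag.

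For part (ii) the decisive input is a dimension count. Using \propref{demdefn}(ii) one finds $\dim Q = (\ell+1)^{s_1}(s_0+1) - (\ell+1)^{s_1-1}(\ell-s_0+1) = (\ell+2)\,(\ell+1)^{s_1-1}s_0 = (\ell+2)\,\dim D(\ell, s-\ell-1)$, while the highest weight of $Q$ is $s = (\ell+1) + (s-\ell-1)$. Since $\dim V(\ell+1) = \ell+2$ and $V(\ell+1) \cong D(\ell+1,\ell+1)$ by \propref{demdefn}(i), this strongly suggests a graded isomorphism $Q \cong V(\ell+1) * D(\ell, s-\ell-1)$, where $*$ denotes the Feigin--Loktev fusion product. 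I would establish this by checking that the fusion product is cyclic and satisfies the defining relations \eqref{locweyl} and \eqref{demrel3} of $Q$, the matching of dimensions then forcing the resulting surjection $Q \twoheadrightarrow V(\ell+1)*D(\ell,s-\ell-1)$ to be an isomorphism. Granting this, part (ii) reduces to the assertion that fusing with the top-level module $V(\ell+1)$ shifts the level $(\ell+1)$ flag multiplicities in weight by $\ell+1$ and rescales the grading by the single factor $q^{(s-n)/2}$; concretely $[V(\ell+1)*D(\ell,m):D(\ell+1,n)]_q = q^{(m-(n-\ell-1))/2}[D(\ell,m):D(\ell+1,n-\ell-1)]_q$ with $m = s-\ell-1$, which is the content of \cite[Lemma 3.8]{CSSW}.

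I expect the genuine difficulty to lie entirely in part (ii), and specifically in the grade bookkeeping of the previous paragraph. A purely character-theoretic attempt is circular: by part (i) and \eqref{chgreq} the multiplicities of $Q$ are determined by $\ch_{\gr} Q = \ch_{\gr} D(\ell,s) - q^{s_1 s_0}\ch_{\gr} D(\ell, s-2s_0)$, but expanding the right-hand side in level $(\ell+1)$ Demazure characters only re-expresses the multiplicities of $Q$ through those of $D(\ell,s)$, which is precisely the quantity that \propref{secondrecursion}(ii) is meant to compute. One therefore genuinely needs the independent structural input of the fusion-product identification (or the explicit flag of part (i) together with a weight- and grade-preserving bijection onto a level $(\ell+1)$ flag of $D(\ell,s-\ell-1)$). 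Finally I would record that, together with the short exact sequence $0 \to \tau_{s_1 s_0}^* D(\ell,s-2s_0) \to D(\ell,s) \to Q \to 0$ and additivity of graded characters, parts (i) and (ii) combine to give \propref{secondrecursion}(ii): the submodule contributes $q^{s_1 s_0}[D(\ell,\ell(s_1-1)+(\ell-s_0)):D(\ell+1,n)]_q$ and the quotient $Q$ contributes the first term.
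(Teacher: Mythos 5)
The paper offers no proof of this proposition at all: it is stated as a quoted result, with part (i) attributed to \cite[Theorem 3.3]{CSSW} and part (ii) to \cite[Lemma 3.8]{CSSW}, and since your proposal also ultimately rests on exactly those two citations, the overall route is the same; what you add is a reconstruction of the structure hiding behind them, and that reconstruction is sound. Your dimension count is correct, $\dim Q = (\ell+2)(\ell+1)^{s_1-1}s_0 = (\ell+2)\dim D(\ell,s-\ell-1)$, the identification $Q \cong V(\ell+1)*D(\ell,s-\ell-1)$ is true, and your fusion-shift formula is exactly equivalent to part (ii) under that identification (one can test all of this, e.g.\ for $\ell=1$, $s=4$: both your formula and part (ii) give $\ch_{\gr}Q = \ch_{\gr}D(2,4)+q^{2}\ch_{\gr}D(2,2)$, which agrees with a direct character computation using \eqref{1to2}). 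Two remarks on where your sketch conceals real work. First, \emph{checking the relations in the fusion product} is not a routine computation inside the filtered tensor product; the clean way is via \cite{CV}, which identifies $V(\ell+1)*D(\ell,s-\ell-1)$ with the module $V(\mu)$, $\mu = (\ell+1,\ell^{s_1-1},s_0-1)$, presented by relations $\boy(r,p)v=0$ of the type appearing in \propref{altpres}; taking $(r,p)=(1,s_1+1)$ and $(r,p)=(s_0,s_1s_0)$ there yields precisely $(y\otimes t^{s_1+1})v=0$ and $(y\otimes t^{s_1})^{s_0}v=0$, i.e.\ the relations \eqref{demrel3} of $Q$, which produces your surjection $Q\twoheadrightarrow V(\mu)$ and lets the dimension count finish. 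Second, whether \cite[Lemma 3.8]{CSSW} is literally your statement about fusing with $V(\ell+1)$, or is instead phrased directly about the quotient $Q$, cannot be determined from this paper; either way your delegation to that lemma is no weaker than the paper's own. Your closing observations --- that a purely character-theoretic derivation of (ii) is circular, and that (i), (ii) and the short exact sequence combine to give \propref{secondrecursion}(ii) --- are both correct and match exactly how the paper uses the proposition (the Corollary displayed immediately after it).
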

The following corollary is immediate.\
\begin{cor} Keep the notation of the proposition. We have
\begin{gather*}[D(\ell, s): D(\ell+1,n)]_q=q^{s_1s_0} [D(\ell, s-2s_0): D(\ell+1,n)]_q +\\ q^{(s-n)/2}[D(\ell, s-\ell-1)):D(\ell+1,n-\ell-1)]_q.\end{gather*}\end{cor}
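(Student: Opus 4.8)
The plan is to deduce the corollary directly from the short exact sequence furnished by \propref{cssw}(i) together with additivity of Demazure multiplicities. By \propref{demdefn}(iii) the $\lie{sl}_2[t]$-submodule of $D(\ell,s)$ generated by $(y\otimes t^{s_1})^{s_0}v_s$ is isomorphic to $\tau_{s_1s_0}^*D(\ell,s-2s_0)$; writing $Q$ for the corresponding quotient, which is precisely the module appearing in \propref{cssw}(ii), we have the short exact sequence of graded $\lie{sl}_2[t]$-modules
\begin{equation*}
0\longrightarrow \tau_{s_1s_0}^*D(\ell,s-2s_0)\longrightarrow D(\ell,s)\longrightarrow Q\longrightarrow 0.
\end{equation*}
First I would check that all three terms admit a level $\ell+1$ Demazure flag: the middle term by \propref{flagexistence}, the submodule because $D(\ell,s-2s_0)$ does (again \propref{flagexistence}) and a grade shift of a flag is a flag, and the quotient $Q$ by \propref{cssw}(i) applied with $m=\ell+1$. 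Hence the three polynomials $[\,\cdot\,:D(\ell+1,n)]_q$ are all well defined.

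Next I would establish the additivity relation
\begin{equation*}
[D(\ell,s):D(\ell+1,n)]_q=[\tau_{s_1s_0}^*D(\ell,s-2s_0):D(\ell+1,n)]_q+[Q:D(\ell+1,n)]_q.
\end{equation*}
This follows because the graded character is additive on short exact sequences: expanding the graded characters of all three modules in the $\bz[q,q^{-1}]$-basis $\{\ch_{\gr}D(\ell+1,s)\}_{s\ge 0}$ (available by \secref{chdembasis}) and using \eqref{chgreq} to identify these coefficients with the multiplicity polynomials, the coefficient of $\ch_{\gr}D(\ell+1,n)$ on both sides yields exactly the displayed relation.

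It then remains to evaluate the two summands. For the quotient, \propref{cssw}(ii) gives at once
\begin{equation*}
[Q:D(\ell+1,n)]_q=q^{(s-n)/2}[D(\ell,s-\ell-1):D(\ell+1,n-\ell-1)]_q.
\end{equation*}
For the submodule, the key observation is that $\tau_{s_1s_0}^*$ raises all gradings by $s_1s_0$, so applying it to a level $\ell+1$ flag of $D(\ell,s-2s_0)$ replaces each quotient $\tau_p^*D(\ell+1,n)$ by $\tau_{p+s_1s_0}^*D(\ell+1,n)$, whence
\begin{equation*}
[\tau_{s_1s_0}^*D(\ell,s-2s_0):D(\ell+1,n)]_q=q^{s_1s_0}[D(\ell,s-2s_0):D(\ell+1,n)]_q.
\end{equation*}
Substituting the last two displays into the additivity relation produces precisely the stated formula.

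The one step that demands care — and what I would regard as the only genuine obstacle — is the bookkeeping of the grade shift. Here I would verify that the cyclic generator $(y\otimes t^{s_1})^{s_0}v_s$ really sits in degree $s_1s_0$ (each factor $y\otimes t^{s_1}$ lowers the $\lie h$-weight but raises the degree by $s_1$), so that the identification of \propref{demdefn}(iii) places the generator of $\tau_{s_1s_0}^*D(\ell,s-2s_0)$ in degree $s_1s_0$ and the exponent of $q$ emerges as $+s_1s_0$ rather than $-s_1s_0$. This is exactly the sign needed to match the corollary, and everything else in the argument is formal.
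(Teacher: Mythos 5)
Your proposal is correct and is precisely the argument the paper intends when it calls the corollary ``immediate'': the short exact sequence $0\to\tau_{s_1s_0}^*D(\ell,s-2s_0)\to D(\ell,s)\to Q\to 0$, additivity of the multiplicity polynomials (via graded characters and the linear independence of $\ch_{\gr}D(\ell+1,n)$), the grade-shift factor $q^{s_1s_0}$ for the submodule, and Proposition (cssw)(ii) for the quotient. Your attention to the sign of the grade shift is also consistent with the paper's actual usage, where $\tau_p^*D(\ell+1,n)$ has its generator in degree $p$, so the shift contributes $q^{+s_1s_0}$ as required.
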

\subsection{} We can now prove Proposition \ref{secondrecursion}. To prove part (i) of the proposition we proceed by induction on $j$. Since $0\le n\le \ell$ we have by Proposition \ref{demdefn}(i) that $$D(\ell, n)\cong D(\ell+1,n)\cong \tau^*_0 V(n),$$ and so, if  $0\le k\le\ell$,  we get  $[D(\ell,k):D(\ell+1,n)]_q=\delta_{k,n}$.  This shows that  induction begins and for the inductive step we assume that  $$[D(\ell,2j'\ell+k):D(\ell+1,n)]_q=\delta_{k,n}q^{j'(j'\ell +n)},$$ holds for all $0\le j'<j$ and all $0\le k,n\le \ell$. Using Corollary  \ref{cssw} and noting that the second term on the right hand side is zero  since $n\le \ell$,  and using the inductive hypothesis, we get
\begin{align*}
[D(\ell,2j\ell+k)):D(\ell+1,n)]_q&=q^{2kj}[D(\ell,2j\ell-k):D(\ell+1,n)]_q\\
&=q^{2kj}q^{(2j-1)(\ell-k)}[D(\ell,2(j-1)\ell+k):D(\ell+1,n)]_q\\
&=\delta_{k,n}q^{2nj+(2j-1)(\ell-n)+(j-1)(\ell(j-1)+n)} = \delta_{k,n}q^{j(\ell j+n)}.
\end{align*}
This proves the inductive step. It also proves that if $j\ge 1$, then
\begin{align*}
[D(\ell,2j\ell-k):D(\ell+1,n)_q&=q^{(\ell-k)(2j-1)}[D(\ell,2(j-1)\ell+k):D(\ell+1,n)]_q\\ &= \delta_{k,n}q^{(\ell-n)(2j-1)}q^{(j-1)((j-1)\ell+n)}= \delta_{k,n}q^{j(j\ell - n)}.
\end{align*}
This completes the proof of part (i). Part (ii) is precisely the statement of Corollary \ref{cssw}.

\section{Proof of Theorem \ref{recursion1}}\label{proofofrecursion1}  The main idea of the proof is the following. We  study  the tensor product  $D(\ell, s)\otimes D(\ell, 1)$ and write   the graded character of the tensor product explicitly as a linear combination of the graded character of level $\ell$--Demazure modules.   If $m>\ell$, this results allows us to write the graded character of   $D(\ell, s)\otimes D(\ell, 1)$ as a linear combination of the graded character of level $m$ Demazure modules  in two different ways.  
A  comparison of  coefficients then  gives Theorem  \ref{recursion1}.

\subsection{}  The  proof of the following Proposition can be found in  Section \ref{beginproof}- \ref{proofend}.
\begin{prop}\label{tensorproduct} Let $\ell$ be a positive integer and let  $s\in\bz_+$. Write $s=\ell s_1+s_0$ with $s_1, s_0\in\bz$, $s_1\ge -1$  and $0<s_0\le \ell$. We have,  \begin{gather*}{\ch}_{\gr} D(\ell,s )\ch_{\gr} D(\ell, 1)=\ch_{\gr}D(\ell ,s+1)+(1-\delta_{s_0,\ell})\ch_{\gr} D(\ell,s-1)\\ + \ q^{s_1(s_0-\ell\delta_{s_0,\ell})}(1-q^{s_1+\delta_{s_0,\ell}})\ch_{\gr}D(\ell, s-2(s_0-\ell\delta_{s_0,\ell})-1).\end{gather*}

\end{prop}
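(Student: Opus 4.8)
The plan is to pass to graded characters. Because the graded character is multiplicative on tensor products (Section~\ref{grch}) and, by Lemma~\ref{chdembasis}, the graded character of any finite--dimensional graded $\lie{sl}_2[t]$--module lies in the free $\bz[q,q^{-1}]$--module with basis $\{\ch_{\gr}D(\ell,n):n\in\bz_+\}$, the assertion is equivalent to an identity in that module, and it suffices to expand $\ch_{\gr}\!\big(D(\ell,s)\otimes D(\ell,1)\big)$ in the Demazure basis and match coefficients. I stress at the outset that the coefficient $q^{\,s_1\bar{s}_0}\big(1-q^{\,s_1+\delta_{s_0,\ell}}\big)$ of the last term (here $\bar{s}_0=s_0-\ell\,\delta_{s_0,\ell}$) carries a genuine minus sign, so the statement is \emph{not} the character of a three--step Demazure flag of $M:=D(\ell,s)\otimes D(\ell,1)$; the negative contribution instead cancels the high--grade tails of $\ch_{\gr}D(\ell,s+1)$ that are absent from $M$. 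This is precisely why the identity must be established at the level of characters rather than by exhibiting a flag.

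The scaffolding I would use is the short exact sequence of Proposition~\ref{demdefn}(iii),
\[
0\longrightarrow \tau^*_{s_1s_0}D(\ell,s-2s_0)\longrightarrow D(\ell,s)\longrightarrow Q_s\longrightarrow 0,
\]
where $Q_s$ is the cyclic module defined by \eqref{locweyl} together with $(y\otimes t^{s_1+1})\bar v_s=0$ and $(y\otimes t^{s_1})^{s_0}\bar v_s=0$ (this $Q_s$ is the module appearing in Proposition~\ref{cssw}). Since $D(\ell,1)$ is two--dimensional, $-\otimes D(\ell,1)$ is exact, and, recalling that $\ch_{\gr}\tau^*_pV=q^{\,p}\,\ch_{\gr}V$, tensoring gives
\[
\ch_{\gr}\!\big(D(\ell,s)\otimes D(\ell,1)\big)=q^{\,s_1s_0}\,\ch_{\gr}\!\big(D(\ell,s-2s_0)\otimes D(\ell,1)\big)+\ch_{\gr}\!\big(Q_s\otimes D(\ell,1)\big).
\]
I would then induct on $s_1$. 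The first summand is handled by the inductive hypothesis, since $s-2s_0=\ell(s_1-1)+(\ell-s_0)$ (respectively $\ell(s_1-2)+\ell$ when $s_0=\ell$) has strictly smaller first coordinate. The base case $s\le\ell$, i.e.\ $s_1\le 0$, is pure $\lie{sl}_2$--theory: by Proposition~\ref{demdefn}(i) both tensor factors are grade--zero irreducibles with $\ch_{\gr}D(\ell,1)=x+x^{-1}$, so Clebsch--Gordan gives $V(s)\otimes V(1)\cong V(s+1)\oplus V(s-1)$. For $s<\ell$ these summands are already Demazure modules, while for $s=\ell$ one rewrites $\ch_{\gr}\tau_0^*V(\ell+1)=\ch_{\gr}D(\ell,\ell+1)-q\,\ch_{\gr}D(\ell,\ell-1)$ using Proposition~\ref{demdefn}(iii) for $D(\ell,\ell+1)$; this reproduces exactly the $\delta_{s_0,\ell}=1$ branch of the formula.

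The substantive step, and the one I expect to be the main obstacle, is the evaluation of $\ch_{\gr}\!\big(Q_s\otimes D(\ell,1)\big)$: the short exact sequence by itself only rearranges the identity and so is circular for this term, and genuine structural information about $M$ is needed. To supply it I would analyse $M$ through three distinguished highest--weight vectors, namely the top vector $v_s\otimes v_1$ of weight $s+1$ in grade $0$ (which generates only a \emph{quotient} of $D(\ell,s+1)$, and a proper one precisely when $s_0=\ell$), the Clebsch--Gordan vector $(y\otimes 1)v_s\otimes v_1-s\,v_s\otimes(y\otimes 1)v_1$ of weight $s-1$ in grade $0$, and a third vector of weight $s-2\bar{s}_0-1$ occurring in grade $s_1\bar{s}_0$ that is forced by the relation $(y\otimes t^{s_1})^{s_0}\bar v_s=0$ in $Q_s$. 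The heart of the argument is to determine the defining relations these vectors satisfy modulo one another, thereby identifying the corresponding subquotients with $D(\ell,s+1)$, $D(\ell,s-1)$ and $\tau^*_{s_1\bar{s}_0}D(\ell,s-2\bar{s}_0-1)$ up to the appropriate grade shifts. The delicate feature is that the cases $s_0<\ell$ and $s_0=\ell$ yield genuinely different relations---in the second the weight $s-1$ term drops out and the tail is reshuffled---so the two branches of the statement must be argued separately. Once these identifications are in place, comparing coefficients in the Demazure basis and invoking the uniqueness of Lemma~\ref{chdembasis} completes the induction.
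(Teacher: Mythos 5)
Your framework is right, and in outline it matches the paper's own proof: the paper also analyses $U_0=D(\ell,s)\otimes D(\ell,1)$ through the submodule $U_2$ generated by $v_s\otimes v_1$, an intermediate submodule $U_1$ generated over $U_2$ by $((y\otimes t^{s_1})^{s_0}v_s)\otimes yv_1$, and the quotient $U_0/U_1$. But there is a genuine gap. Everything you call ``the heart of the argument'' --- determining the relations the distinguished vectors satisfy modulo one another and identifying the subquotients --- is precisely the nontrivial content of the proof, and you do not carry out any of it. In the paper this occupies Sections \ref{beginproof}--\ref{proofend}: one must show, for instance, that $(y\otimes t^{s_1})^{\ell}(v_s\otimes yv_1)$ and $(y\otimes t^{s_1-1})^{\ell-s_0}(y\otimes t^{s_1})^{s_0}(v_s\otimes yv_1)$ lie in $U_2$, and the only available tool for this is the finer presentation of $D(\ell,s)$ by the Garland-type elements $\boy(r,p)$ (Proposition \ref{altpres}, from \cite{CV}), which you never invoke. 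Moreover, checking relations only yields \emph{surjections} from (quotients of) Demazure modules onto the subquotients; to upgrade these to isomorphisms --- hence to get an equality rather than an inequality of characters --- the paper uses the dimension count $\dim U_0=2(\ell+1)^{s_1}(s_0+1)$ from Proposition \ref{demdefn}(ii), a step absent from your plan.

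There is also a concrete error that would derail the main case. You claim the submodule generated by $v_s\otimes v_1$ is a proper quotient of $D(\ell,s+1)$ ``precisely when $s_0=\ell$.'' In fact it is proper whenever $s\ge\ell$: in the case $s>\ell>s_0$ it is $D(\ell,s+1)/\tau^*_{s_1(s_0+1)}D(\ell,s-2s_0-1)$, and it is exactly this properness that produces the negative part of the coefficient $q^{s_1s_0}(1-q^{s_1})$. Followed literally, your three subquotients in that case have characters summing to $\ch_{\gr}D(\ell,s+1)+\ch_{\gr}D(\ell,s-1)+q^{s_1s_0}\ch_{\gr}D(\ell,s-2s_0-1)$, which is the wrong formula and contradicts your own (correct) opening remark that the minus sign must cancel high-grade tails of $\ch_{\gr}D(\ell,s+1)$. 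Finally, your preliminary reduction --- tensoring the short exact sequence of Proposition \ref{demdefn}(iii) with $D(\ell,1)$ and inducting on $s_1$ --- is, as you yourself concede, circular for the term $\ch_{\gr}(Q_s\otimes D(\ell,1))$, so it does no work and becomes superfluous once the structural analysis of $U_0$ is done. Only your base case $s\le\ell$ (Clebsch--Gordan, plus rewriting $\ch_{\gr}\tau_0^*V(\ell+1)=\ch_{\gr}D(\ell,\ell+1)-q\,\ch_{\gr}D(\ell,\ell-1)$) is actually complete, and it agrees with the paper's case (i) and the $s=\ell$ instance of case (ii).
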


\begin{rem}\label{rem:notation_dict}
Let $s$ be as in the proposition.
If we let $r(s,\ell)$ be the unique integer with $0 \leq r(s,\ell) <\ell$ such that
$s = \ell \,\floor{\frac{s}{\ell}} + r(s,\ell)$, we have
$$ \delta_{s_0,\ell} = \delta_{r(s,\ell),\, 0}\;\;, \;\;\;\; r(s,\ell) = s_0-\ell\delta_{s_0,\ell}\;\;, \;\;\;\; \floor{\frac{s}{\ell}} = s_1+\delta_{s_0,\ell}.$$
In particular, this means $r(s,\ell)\,\delta_{s_0,\ell} = 0$ and hence $  r(s,\ell)\,\floor{\frac{s}{\ell}} = r(s,\ell)\,s_1$. Using these relations, Proposition \ref{tensorproduct} can be reformulated in terms of
$\floor{\frac{s}{\ell}}$ and $r(s,\ell)$ in place of $s_1, s_0$.
\end{rem}

\subsection{} We now prove Theorem \ref{recursion1}. We first explain the strategy of the proof.
Using  equation \eqref{chgreq} and Proposition \ref{flagexistence}, we can write,
\[
\ch_{\gr}D(\ell,s)=\sum_{p\ge 0}[D(\ell,s):D(m,p)]_q \ch_{\gr}D(m,p),
\] where $m\in\bz_+$ with $m \ge \ell$.
Multiplying both sides of the equation by $\ch_{\gr}D(\ell, 1)$ gives,
\begin{equation}\label{equality}
\ch_{\gr}D(\ell,s)\ch_{\gr}D(\ell,1)=\sum_{n\ge 0}[D(\ell,s):D(m,p)]_q \ch_{\gr}D(m,p)\ch_{\gr}D(m,1).
\end{equation}
Here, we have used the fact that $D(\ell, 1)\cong D(m, 1)$ (see Proposition \ref{demdefn}(i)) as $\lie{sl}_2[t]$-modules.
Now, recall that the product of graded characters is the graded character of the tensor product. We can therefore apply Proposition \ref{tensorproduct}  to both sides of the preceding equation . Applying it to the right hand side gives us a linear combination of the graded characters of level $m$--Demazure modules. Applying it to the left hand side, gives a linear combination of graded characters of level $\ell$--Demazure modules. These can be further expressed as a combination of the graded characters of  level $m$--Demazure modules.  Equating  the coefficients of a level $m$--Demazure module on both    sides will prove Theorem \ref{recursion1}.

\medskip

In this subsection, it will be more convenient to work with the notation suggested by Remark \ref{rem:notation_dict}.
Let  us collect the coefficients of $\ch_{\gr}D(m,n)$ which occur on the right hand side of equation \eqref{equality} after applying Proposition \ref{tensorproduct}. It  can  occur with non--zero coefficients only  in the products:  $\ch_{\gr} D(m,n \pm 1)\ch_{\gr} D(m,1)$  and in $\ch_{\gr}D(m,p)\ch_{\gr} D(m,1)$, where $$p-2r(p,m)-1=n.$$  We claim that this implies  \begin{equation} \label{p}
p=2m + n-2r(n,m)-1.
\end{equation}
To prove this, we consider $x = p+n+1.$ Since $ x= 2 \left( p - r(p,m)\right)$, it is clearly a multiple of $2m$.
Further, since $ p = n+1 + 2r(p,m)$, we have $$ n+1 \leq p \leq n+1+2(m-1).$$
This implies $$ 2n+2 \leq x \leq 2n+2m.$$
Thus, we deduce that $x$ is the unique multiple of $2m$ that lies within these bounds; it is given by
$$ x = 2m \left(\floor{\frac{2n+2m}{2m}}\right) = 2m\,\left( \floor{\frac{n}{m}} + 1 \right),$$
or equivalently by
$$ x = 2m+2n - r(2m+2n,2m) = 2m+2n - 2r(n,m).$$
Thus, $p = x-n-1$ is given by the required expression.

Summarizing (and using Remark \ref{rem:notation_dict} again),
we find that the coefficient of $\ch_{\gr}D(m,n)$ on the right hand side is:
\begin{gather}\label{eq:rhs}
[D(\ell,s):D(m,n-1)]_q+(1-\delta_{r(n+1,m),\,0})[D(\ell,s):D(m,n+1)]_q\\+
q^{r(p,m)\,\floor{\frac{p}{m}}}(1- q^{\floor{\frac{p}{m}}}) [D(\ell,s): D(m,p)]_q,\notag
\end{gather} where $p$ is as in \eqref{p}. We note from \eqref{p} that
\beq \label{eq:psimp}
r(p,m) = m-r(n,m) - 1 \text{ and } \floor{\frac{p}{m}} = \frac{p - r(p,m)}{m} = 1 + \floor{\frac{n}{m}}.
\eeq
Now, we apply Proposition \ref{tensorproduct} to the left hand side of equation \eqref{equality}. This gives us a linear combination of graded characters of level $\ell$-Demazure modules which we can then rewrite using \eqref{chgreq}. We find then that the resulting coefficient of $\ch_{\gr}D(m,n)$ is:
\begin{gather} \label{eq:lhs}
[D(\ell,s+1):D(m,n)]_q+(1-\delta_{r(s,\ell),\,0})[D(\ell,s-1):D(m,n)]_q\\ +
q^{r(s,\ell)\,\floor{\frac{s}{\ell}}}(1- q^{\floor{\frac{s}{\ell}}})[D(\ell,s-2r(s,\ell)-1):D(m,n)]_q.\notag
\end{gather}

Setting \eqref{eq:rhs} and \eqref{eq:lhs} equal to each other and using \eqref{eq:psimp},
 we obtain Theorem \ref{recursion1}. \qed

\subsection{} \label{beginproof} The rest of the section is devoted to the proof of Proposition \ref{tensorproduct}.  If $s=0$, then $D(\ell, 0)$ is the trivial module and the propostion is trivially true. So, from now on we assume that $s>0$.  For the proof we consider three mutually exclusive  cases and it is helpful to write down the equality of characters according to these cases:
\begin{enumerit}
\item[(i)]  If $0<s=s_0<\ell$, then \begin{equation}\label{s=s_0}\ch_{\gr} D(\ell,s)\otimes D(\ell,1)=\ch_{\gr} D(\ell, s+1)+\ch_{\gr} D(\ell, s-1).\end{equation}\\
\item[(ii)] If $s_0=\ell$ (in particular if $\ell=1$),  then
 \begin{equation}\label{s_0=l}\ch_{\gr}( D(\ell,s)\otimes D(\ell,1))=\ch_{\gr} D(\ell, s+1)+(1-q^{s_1+1})\ch_{\gr}D(\ell, s-1).\end{equation}\\
\item[(iii)] If $s>\ell> s_0$,  then  \begin{gather}\label{case3}{\ch}_{\gr}( D(\ell,s )\otimes  D(\ell, 1))=\ch_{\gr}D(\ell ,s+1)+\ch_{\gr} D(\ell,s-1)+ \ q^{s_1s_0}(1-q^{s_1})\ch_{\gr}D(\ell, s-2s_0-1).\end{gather}\\
\end{enumerit}

\subsection{}  \label{defineu0u2} By  Proposition \ref{demdefn}(i) we know   that $D(\ell, 1)\cong\tau_0^*V(1)$ for all $\ell \in\bz_+$. In particular, the elements $v_1, yv_1$  are a basis of $D(\ell, 1)$ where we have identified the element $y\in\lie{sl}_2$ with $y\otimes 1$ in $\lie{sl}_2[t]$.
From now on for ease of notation, we set $$U_0=D(\ell ,s)\otimes D(\ell, 1).$$
\begin{lem}  We have $U_0\cong \bu(\lie{sl}_2[t])(v_s\otimes yv_1)$.
\end{lem}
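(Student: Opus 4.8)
The plan is to establish the nontrivial inclusion $U_0 \subseteq W$, where $W := \bu(\lie{sl}_2[t])(v_s \otimes yv_1)$, the reverse inclusion being immediate. Two structural facts drive everything. First, by \propref{demdefn}(i) we have $D(\ell,1) \cong \tau^*_0 V(1)$, so $\{v_1, yv_1\}$ is a basis, the subalgebra $\lie{sl}_2 \otimes t\bc[t]$ annihilates $D(\ell,1)$, and inside $V(1)$ one has $(x\otimes 1)v_1 = 0$, $(x\otimes 1)(yv_1) = v_1$, together with the key identity $(y \otimes t^a)(yv_1) = 0$ for \emph{every} $a \geq 0$ (for $a \geq 1$ because positive powers of $t$ act by zero, and for $a=0$ because $y^2v_1$ has nonexistent weight $-3$). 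Second, the defining relations \eqref{locweyl} say $(x\otimes \bc[t])v_s = 0$ and that $\lie h \otimes \bc[t]$ acts on $v_s$ by scalars; hence by PBW $D(\ell,s) = \bu(y \otimes \bc[t])\,v_s$, so $D(\ell,s)$ is spanned by the monomials $u = (y\otimes t^{a_1})\cdots(y\otimes t^{a_r})\,v_s$.

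First I would recover the highest weight vector by a single bracket computation: since $(x\otimes 1)v_s = 0$ and $(x\otimes 1)(yv_1) = v_1$,
\[
(x\otimes 1)(v_s \otimes yv_1) = (x\otimes 1)v_s \otimes yv_1 + v_s \otimes (x\otimes 1)(yv_1) = v_s \otimes v_1,
\]
so both $v_s\otimes v_1$ and $v_s \otimes yv_1$ lie in $W$. The main step is then an induction on the length $r$ of the monomials $u$ above, proving \emph{simultaneously} that $u \otimes v_1 \in W$ and $u \otimes yv_1 \in W$; the base case $r = 0$ is exactly the pair just produced. For the inductive step, setting $u' = (y\otimes t^a)u$, I would compute
\[
(y\otimes t^a)(u \otimes v_1) = u' \otimes v_1 + u \otimes (y\otimes t^a)v_1, \qquad (y\otimes t^a)(u \otimes yv_1) = u' \otimes yv_1,
\]
the second equality using $(y \otimes t^a)(yv_1) = 0$. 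Thus $u' \otimes yv_1 \in W$ directly, while for $u' \otimes v_1$ the cross term $u \otimes (y\otimes t^a)v_1$ vanishes when $a \geq 1$ and equals $u \otimes yv_1 \in W$ (by hypothesis) when $a = 0$. Running over a spanning set of monomials gives $D(\ell,s)\otimes v_1 + D(\ell,s)\otimes yv_1 = U_0 \subseteq W$, completing the argument.

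The one genuinely delicate point, and the reason both statements must be carried through the induction together, is precisely the $a = 0$ cross term: applying $y \otimes 1$ mixes the $v_1$- and $yv_1$-components of the second tensor factor, so $u' \otimes v_1$ can be isolated only after $u \otimes yv_1$ is already known to lie in $W$. I anticipate no further obstacle, since everything else reduces to the routine $\lie{sl}_2$-bracket identities in $V(1)$ recorded above.
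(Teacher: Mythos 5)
Your proof is correct and takes essentially the same route as the paper's: both hinge on the identity $(y\otimes t^k)(yv_1)=0$ for all $k\ge 0$ to show that the submodule generated by $v_s\otimes yv_1$ contains $D(\ell,s)\otimes yv_1$, and then recover the $v_1$-component using the $x$-action. The only difference is organizational --- the paper first sweeps out all of $D(\ell,s)\otimes yv_1$ and then applies $x$ once to produce $D(\ell,s)\otimes v_1$, whereas you interleave the two components in a simultaneous induction on monomial length --- but the mathematical content is identical.
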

\begin{pf} Since $y^2 v_1=0$ we have  $$(y\otimes t^k)(v_s\otimes yv_1)= (y\otimes t^k)v_s\otimes yv_1,\ \ k\ge 0.$$ Repeating this argument we get that the $\lie{sl}_2[t]$--submodule generated by $v_s\otimes yv_1$ contains the subspace $D(\ell,s)\otimes yv_1$. Since $x(D(\ell,s)\otimes yv_1)=D(\ell, s)\otimes v_1 + (xD(\ell,s))\otimes yv_1,$ the Lemma is established.
\end{pf}
Set $$U_2=\bu(\lie{sl}_2[t])(v_s\otimes v_1).$$
 It is trivial to check that for all $f\in\bc[t]$, we have  \begin{gather}\label{vsv1} \ \
 \  (x\otimes f)(v_s\otimes v_1)= 0,\ \ (h\otimes f)(v_s\otimes v_1)=f(0)(s+1)(v_s\otimes v_1),\ \ (y\otimes 1)^{s+1}(v_s\otimes v_1)=0,\end{gather} and also that \begin{gather}\label{vsyv1}(x\otimes f)(v_s\otimes yv_1)\in U_2,\ \ \ (h\otimes f)=f(0)(s-1)(v_s\otimes yv_1), \ \ (y\otimes 1)^{s}(v_s\otimes yv_1)\in U_2.\end{gather}

\subsection{} We now prove that  equation \eqref{s=s_0} is satisfied. Since $s=s_0<\ell$, we see by using Proposition \ref{demdefn}(i) that  $$(\lie{sl}_2\otimes t\bc[t])(v_s\otimes v_1)=0,\ \qquad  U_2\cong \tau^*_0V(s+1)\cong D(\ell, s+1).$$ Since the graded character is additive on short exact sequences, it suffices now  to prove that $U_0/U_2\cong D(\ell, s-1)$.   Equation \eqref{vsyv1} and the fact that $(\lie{sl}_2\otimes t\bc[t])(v_s\otimes yv_1)=0$ shows that  that the image of $v_s\otimes yv_1$ in $U_0/U_2$ satsifies the relations of $D(\ell, s-1)$ given in Proposition \ref{demdefna}.  Since  $D(\ell,s-1)\cong \tau^*_0 V(s-1)$ is irreducible we see that $U_0/U_2\cong  D(\ell,s-1)$  and \eqref{s=s_0} follows.

\subsection{} To prove  the remaining two cases, we need the following result established in   \cite[Lemma 2.3, Equation (2.10)]{CV}. For any $m\in\bz_+$ and  $a\in \bu(\lie{sl}_2[t])$ let $a^{(m)}=a^m/m!$.
Given a positive integer $r$ and a non--negative integer $p$, define elements $\boy(r,p)\in\bu(\lie{sl}_2[t])$ by

 \[
{\bf y}(r,p)=\sum (y\otimes 1)^{(b_0)}\cdots (y\otimes t^p)^{(b_p)}
\]
where  the sum is over all $p$--tuples $(b_0,\cdots, b_p)$ such that $r=\sum_j b_j,\, p=\sum_j jb_j$.
 \begin{prop}\label{altpres}  Let $\ell$ be a positive integer and $s=\ell s_1+s_0$  with $s_1,s_0\in\bz_+$ and $0<s_0\le \ell$. Then   $D(\ell, s)$ is the $\lie{sl}_2[t]$--module generated by an element $v_s$ with the  relations given in \eqref{locweyl} and the relation  \[{\bf y}(r,p)v_s=0
\]
for all $r, p\in\bz_+$ satisfying,  $p\ge r s_1+1$ or   $r+p\ge 1 +rk+\ell(s_1-k)+s_0$ for some $0\le k\le s_1$.\hfill\qedsymbol
\end{prop}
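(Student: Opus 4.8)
The plan is to show that the two presentations of $D(\ell,s)$ define the same module. Write $z_j=y\otimes t^j$, and recall that since $[z_i,z_j]=0$, the elements $\{z_j:j\ge 0\}$ generate a commutative (polynomial) subalgebra of $\bu(\lie{sl}_2[t])$; by the Poincar\'e--Birkhoff--Witt theorem together with $(x\otimes\bc[t])v_s=0$ and $(h\otimes t\bc[t])v_s=0$, any such cyclic highest weight module is spanned by the monomials in the $z_j$ applied to $v_s$. The element $\boy(r,p)$ is exactly the piece of $y$-degree $r$ and $t$-degree $p$ of this polynomial algebra; indeed $\sum_{r,p}w^ru^p\,\boy(r,p)=\exp\!\left(w\sum_{j\ge 0}u^jz_j\right)$, so in particular $\boy(1,p)=z_p$. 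Let $M$ be the module generated by $v$ with relations \eqref{locweyl} together with $\boy(r,p)v=0$ for all $(r,p)$ in the stated range. I would prove $M\cong D(\ell,s)$ by producing surjections in both directions.

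The direction $D(\ell,s)\twoheadrightarrow M$ amounts to verifying the two relations of \eqref{demrel2} inside $M$. The first, $z_{s_1+1}v=\boy(1,s_1+1)v=0$, is immediate since $(1,s_1+1)$ satisfies the first range condition $p\ge rs_1+1$ with equality. For the second (needed only when $s_0<\ell$), one checks that $(s_0+1,\,s_1(s_0+1))$ satisfies the second range condition with $k=s_1$ (in fact $r+p=(s_0+1)(s_1+1)$ equals the bound there), so $\boy(s_0+1,s_1(s_0+1))v=0$ in $M$. Now $\boy(s_0+1,s_1(s_0+1))$ equals $(s_0+1)!\,z_{s_1}^{(s_0+1)}$ plus a sum of monomials each of which, by the constraint $\sum jb_j=s_1\sum b_j$, must contain a factor $z_j$ with $j\ge s_1+1$; since the $z_j$ commute and $z_jv=\boy(1,j)v=0$ for $j\ge s_1+1$ (first condition again), every such monomial kills $v$, leaving $z_{s_1}^{s_0+1}v=0$.

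The reverse surjection $M\twoheadrightarrow D(\ell,s)$ is the substantial part: I must show $\boy(r,p)v_s=0$ in $D(\ell,s)$ for every admissible $(r,p)$. The key tool is the Garland identity, which, using $(x\otimes\bc[t])v_s=0$, $hv_s=sv_s$ and $(h\otimes t\bc[t])v_s=0$, rewrites divided-power products of the form $(x\otimes t^a)^{(j)}(y\otimes 1)^{(j+r)}v_s$ as $\pm\,\boy(r,ja)v_s$ modulo terms of strictly smaller $y$-degree; this converts a lowering relation among the $z_j$ into a relation among the $\boy(r,p)$. I would induct on the $y$-degree $r$. The base case $r=1$ reduces to $z_pv_s=0$ for $p\ge s_1+1$, which one deduces from the two generators $z_{s_1+1}v_s=0$ and $z_{s_1}^{s_0+1}v_s=0$ of \eqref{demrel2} by applying the $x$-action (these identities do not follow from a single commutator, as the example $\ell=2,\,s=3$ already shows, and genuinely use the full $\lie{sl}_2[t]$-structure). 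For the inductive step one applies $(x\otimes t^a)^{(j)}$ to the relations already established, peels off the top $\boy(r,p)$-term via the Garland identity, and disposes of the lower-degree corrections by the inductive hypothesis. The whole scheme succeeds provided the reductions never leave the admissible range, and this combinatorial matching is the main obstacle: one must verify that whenever $(r,p)$ satisfies one of the two inequalities, the parameters arising from the Garland reductions do so as well, keeping careful track of $r(s,\ell)$, $\floor{s/\ell}$ and of the case $s_0=\ell$ versus $s_0<\ell$. Here the flexibility in the choice of $k$ in the second condition is precisely what absorbs the shifts produced by the $x$-action, and the two generators of \eqref{demrel2} appear as the extremal members of the range (the first condition at $r=1$ and the second at $k=s_1$). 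As a safeguard, since $\dim D(\ell,s)=(\ell+1)^{s_1}(s_0+1)$ is known from Proposition \ref{demdefn}(ii), the argument can alternatively be closed by checking that the $\boy$-relations reduce the monomial spanning set of $\bu(\lie{sl}_2[t])v$ to exactly this size, avoiding a direct comparison of the two relation sets.
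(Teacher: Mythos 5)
First, a point of reference: the paper does not prove this proposition at all --- it is imported verbatim from \cite[Lemma 2.3, Equation (2.10)]{CV} (together with \cite[Theorem 2]{CV} for the presentation \eqref{locweyl}--\eqref{demrel2}), so your attempt has to be measured against that reference rather than anything in the present text. Your overall architecture (two surjections between cyclic modules, then finite-dimensionality) is the right one, and the easy half is done correctly: $(1,s_1+1)$ satisfies the first range condition, $(s_0+1,\,s_1(s_0+1))$ satisfies the second with $k=s_1$ (with equality), and your observation that every monomial of $\boy(s_0+1,s_1(s_0+1))$ other than the divided power $(y\otimes t^{s_1})^{(s_0+1)}$ must contain a factor $y\otimes t^j$ with $j\ge s_1+1$ is correct (your prefactor $(s_0+1)!$ is a harmless scalar slip). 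This gives the surjection $D(\ell,s)\twoheadrightarrow M$.

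The genuine gap is the other direction, which is the entire mathematical content of the proposition: proving $\boy(r,p)v_s=0$ in $D(\ell,s)$ for \emph{every} admissible $(r,p)$. Your treatment of this is a plan, not a proof --- you yourself defer the crucial verification (``the reductions never leave the admissible range'') as ``the main obstacle,'' and nothing in the sketch indicates how the induction actually closes. Two concrete problems compound this. First, your Garland identity is misstated: $(x\otimes t^a)^{(j)}(y\otimes 1)^{(j+r)}$ lies in $\bu(\lie{sl}_2\otimes\bc[t^a])$, so applied to $v_s$ it produces (up to sign, modulo terms vanishing on $v_s$) only the analogue of $\boy(r,j)$ built from the elements $y\otimes t^{ai}$ --- \emph{not} $\boy(r,ja)$. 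The honest identity, with $a=1$, gives $\boy(r,p)v_s=\pm(x\otimes t)^{(p)}(y\otimes 1)^{(p+r)}v_s$, which kills $\boy(r,p)v_s$ exactly when $r+p\ge s+1$, i.e.\ only the $k=0$ instance of the second condition; all remaining admissible pairs require shifted identities anchored at the relations \eqref{demrel2}, and that is where all the work of \cite{CV} lies. Second, your proposed ``safeguard'' is logically backwards: counting a spanning set of $M$ bounds $\dim M$ from \emph{above}, but after your part one you already have $\dim M\le\dim D(\ell,s)$; what is missing is the lower bound $\dim M\ge\dim D(\ell,s)$, i.e.\ linear independence of that spanning set inside the abstractly presented module $M$, which cannot be read off from the relations (and trying to verify independence by mapping to $D(\ell,s)$ presupposes exactly the relations you are trying to prove). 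In \cite{CV} this lower bound comes from an independent realization of the presented module (via fusion products and known dimensions of Demazure modules), so the safeguard does not avoid the comparison of relation sets --- it conceals the same difficulty.
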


\subsection{} We now consider the case when $s_0=\ell$, i.e., $s=\ell(s_1+1)$. We shall prove that there exists surjective maps of graded $\lie{sl}_2[t]$--modules $$\varphi_1: D(\ell, s+1)/\tau^*_{s_1+1} D(\ell, s-1)\to U_2\to 0,\ \ \varphi_2: D(\ell, s-1)\to U_0/U_2\to 0.$$ Once this is done, the proof of \eqref{s_0=l} is completed as follows. By Proposition \ref{demdefn}(ii), we have  $$\dim D(\ell, s+1)=2(\ell+1)^{s_1+1} = \dim U_0=\dim U_0/U_2+\dim U_2,
$$ and hence $\varphi_1$ and $\varphi_2$ must be isomorphisms. Using the additivity of $\ch_{\gr}$  gives \eqref{s_0=l}.

To prove the existence of $\varphi_1$,   use  Proposition \ref{demdefna} and Proposition \ref{demdefn}(iii) with $s$ replaced by $s+1=\ell(s_1+1)+1$. In view of \eqref{vsv1} it suffices to   prove that $ (y\otimes t^{s_1+1})(v_s\otimes v_1)=0$.  But this is obvious since $(y\otimes t^{s_1+1})v_s= 0 =(y\otimes t^{s_1+1})v_1$.

To prove the existence of $\varphi_2$, note that $s-1=\ell s_1+\ell-1$. In view of \eqref{vsyv1} we see that   we   only  have to prove that  $$(y\otimes t^{s_1+1})(v_s\otimes yv_1)\in U_2,\ \ \ell>1,\ \ \ (y\otimes t^{s_1})^{\ell}(v_s\otimes yv_1)\in U_2,\ \ \ell\ge 1.$$ The idea in both  cases is the same: namely for all $p\ge 0$ and $r\ge 1$, we can write $$(y\otimes t^p)^r(v_s\otimes yv_1)= (y\otimes t^p)^ry(v_s\otimes v_1)- C((y\otimes t^p)^ryv_s)\otimes v_1, $$  for some $C\in\bc$. Since the first term on the right hand side is in $U_2$ the left  hand side  will be in $U_2$ iff the second term on the right hand side is also in $U_2$. In other words, we must prove that
 \begin{equation}\label{one} ((y\otimes t^{s_1+1})yv_s)\otimes v_1)\in U_2,\ \ \ell>1,\ \ ((y\otimes t^{s_1})^{\ell}yv)_s\otimes v_1)\in U_2,\ \ \ell\ge 1.\end{equation}
If $\ell>1$. then $ ((y\otimes t^{s_1+1})yv_s)\otimes v_1)=0$ since $(y\otimes t^{s_1+1})v_s=0$ and the first assertion of \eqref{one} is established.  To prove the second assertion suppose first that $s_1=0$, i.e., $s=\ell$ .Then equation \eqref{locweyl} gives $  (y\otimes 1)^{\ell}yv_\ell=y^{\ell+1}v_\ell=0$ and we are done. If $s_1>0$,
 take  $r=\ell+1$, $p=\ell s_1$  and $k=0$ in Proposition \ref{altpres} and observe  that $$\boy(\ell+1,  \ell s_1)v_s=0.$$  Suppose that $b_0,\cdots , b_{\ell s_1}$ are such that $\sum_{j=0}^{\ell s_1}b_j=\ell+1$ and $\sum_{j=1}^{\ell s_1} jb_j=\ell s_1$. If $b_m> 0$ for any $m\ge s_1+1$ then  $(y\otimes t^m)v_s=0$ and so $$ (y\otimes 1)^{(b_0)}\cdots (y\otimes t^{\ell s_1})^{(b_{\ell s_1})}v_s=0.$$  Suppose now that  $b_j=0$ for all $j>s_1$ and $b_0>1$. Then,   we have $$\sum_{j=1}^{s_1}b_j< \ell ,  \ \  \ell s_1= \sum_{j=1}^{s_1}jb_j \le s_1\sum_{j=1}^{s_1}b_j< \ell s_1,$$ which is absurd.  Hence $b_0\le 1$. If $b_0=1$ and $b_m>0$ for $0<m<s_1$,  then we again have $$\ell s_1=\sum_{j=1}^{s_1}jb_j\le  s_1\left(\sum_{j\ne m}b_j\right)+ mb_m<s_1\sum_{j=1}^{s_1}b_j=\ell s_1,$$ which is again absurd. Hence we find  that $$0= \boy(\ell+1,\ell s_1)v_s= (y\otimes 1)(y\otimes t^{s_1})^{\ell} v_s  + X v_s$$ where $X\in\bu(\lie {sl_2}\otimes t\bc[t])$ is an element of grade $\ell s_1>0$. This gives,
$$((y\otimes 1)(y\otimes t^{s_1})^{\ell} v_s)\otimes v_1= - Xv_s\otimes v_1= -X(v_s\otimes v_1)\in U_2$$ and the proof of \eqref{one} is complete.

\subsection{}\label{proofend} For the final case of $s>\ell>s_0$, we
need an additional submodule,  \begin{gather*} U_1= U_2+   \bu(\lie{sl_2}[t])(y\otimes t^{s_1})^{s_0}(v_s\otimes yv_1)= U_2+\bu(\lie{sl_2}[t])((y\otimes t^{s_1})^{s_0}v_s)\otimes yv_1.\end{gather*}  We will show the existence of three surjective morphisms of graded $\lie{sl}_2[t]$--modules: \begin{gather*}\psi_1:  D(\ell, s+1)/\tau^*_{s_1(s_0+1)} D(\ell,s-2s_0-1)\to U_2\to 0,\\ \psi_2:   \tau^*_{s_1s_0} D(\ell, s-2s_0-1)\to U_1/U_2\to 0,\ \ \psi_3:  D(\ell,s-1) \to U_0/U_1\to 0.
\end{gather*}
The proof is then completed as in the preceding case: a dimension count shows that the maps $\psi_j$, $j=1,2,3$ must be isomorphisms and the equality of graded characters follows.  The proof of the existence of the maps is also very similar to the proofs given for $\varphi_j$, $j=1,2$, and  we provide the details only in the case of the module $U_1/U_2$  which is more complicated.  Thus, for $\psi_2$ to exist
 we  must prove that    \begin{gather}
\label{u11} (x\otimes \bc[t])((y\otimes t^{s_1})^{s_0}v_s)\otimes yv_1\in U_2,  \ \  ((h\otimes t\bc[t])(y\otimes t^{s_1})^{s_0}v_s)\otimes yv_1) =0 ,
\end{gather}
as well as: if $s_0<\ell-1$, \begin{gather}\label{dem1} (y\otimes t^{s_1})(y\otimes t^{s_1})^{s_0}(v_s\otimes yv_1)\in U_2,\ \  (y\otimes t^{s_1-1})^{\ell-s_0}(y\otimes t^{s_1})^{s_0}(v_s\otimes yv_1)\in U_2\end{gather}  and if $s_0=\ell-1$,\begin{equation}\label{dem2} (y\otimes t^{s_1-1})(y\otimes t^{s_1})^{s_0}(v_s\otimes yv_1)\in U_2.\end{equation}

 For \eqref{u11}, it is enough to  note that $xyv_1= v_1$ and that  Proposition \ref{demdefn}(iii) implies that  $$ (x\otimes \bc[t])(y\otimes t^{s_1})^{s_0}v_s=\ 0= \  (h\otimes t\bc[t]))v_s.$$
 Since $s_1\ge 1$ we have,  $$(y\otimes t^{s_1})(y\otimes t^{s_1})^{s_0}(v_s\otimes yv_1)=(y\otimes t^{s_1})^{s_0+1}v_s\otimes yv_1 =0,$$ where the last equality is from \eqref{demrel2}.  This proves the first assertion in  \eqref{dem1}.

To prove the second assertion in \eqref{dem1} and \eqref{dem2}, 
we argue as in the proof of the existence of  map $\varphi_2$ that  $$(y\otimes t^{s_1-1})^{\ell-s_0}(y\otimes t^{s_1})^{s_0}(v_s\otimes yv_1)\in U_2\iff ( (y\otimes t^{s_1-1})^{\ell-s_0}(y\otimes t^{s_1})^{s_0}yv_s)\otimes v_1\in U_2.$$
 Taking $r=\ell+1$, $p=s-\ell$ and $ k=0$ we  see by using Proposition \ref{altpres} that $$\boy(\ell+1, s-\ell)v_s=0.$$ Suppose that $((y\otimes 1)^{(b_0)}\cdots (y\otimes t^{s-\ell})^{(b_{s-\ell})})$,  is an expression occurring in $\boy(\ell+1, s-\ell)$. Then its action on $v_s$ is zero if $b_j>0$ for some $j\ge s_1+1$.  Moreover, by Proposition  \ref{demdefn}(iii), we have $$(y\otimes t^{s_1})^{s_0+1}v_s=0,\ \ (y\otimes t^{s_1-1})^{\ell-s_0+1}(y\otimes t^{s_1})^{s_0}v_s=0,$$ it follows that  we may assume also that \begin{equation}\label{bs1} b_{s_1}\le s_0,\ \ b_{s_1-1}\le \ell-s_0.\end{equation} If $s_1=1$, this forces $b_1=s_0$ and $b_0=\ell+1-s_0$ and hence we have proved that $$0=\boy(\ell+1, s-\ell)= y^{\ell+1-s_0}(y\otimes t)^{s_0}v_s\in U_2$$
Suppose that  $s_1>1$ and $b_0>0$. Then $\sum_{j=1}^{s_1} b_j\le \ell$ and we get $$s-\ell=\sum_{j=1}^{s_1}jb_j\le \left((s_1-2)\sum_{j=1}^{s_1}b_j \right)+ b_{s_1-1}+2b_{s_1}\le  \ell(s_1-2) +b_{s_1-1}+2b_{s_1},$$ i.e $b_{s_1-1}+2b_{s_1}\ge \ell +s_0$. Using equation \eqref{bs1}, we see that we must have $b_{s_1-1}=\ell-s_0$ and $b_{s_1}=s_0$ and hence $b_0=1$ and $b_m=0$ if $m\notin\{0,s_1-1, s_1\}$.

This proves that the element, $$0=\boy(\ell+1, s-\ell)v_s= ((y\otimes t^{s_1-1})^{\ell-s_0}(y\otimes t^{s_1})^{s_0}y) v_s  +Xv_s$$ where $X\in\bu(\lie{sl}_2\otimes t\bc[t])$. Since $Xv_s\otimes v_1= X(v_s\otimes v_1)$ it follows that $$ ((y\otimes t^{s_1-1})^{\ell-s_0}(y\otimes t^{s_1})^{s_0}y) v_s \otimes v_1\in U_2.$$

\end{document}